\numberwithin{equation}{section}
\newtheorem{letterthm}{Theorem}
\newtheorem{lettercor}[letterthm]{Corollary}
\newtheorem{letterprob}[letterthm]{Problem}
\newtheorem{theorem}{Theorem}[section]
\newtheorem{lemma}[theorem]{Lemma}
\newtheorem{proposition}[theorem]{Proposition}
\newtheorem{definition}[theorem]{Definition}
\newtheorem{notation}[theorem]{Notation}
\newtheorem{remark}[theorem]{Remark}
\newcommand{\act}{\curvearrowright}
\DeclareMathOperator{\ad}{ad}
\newcommand{\al}{\alpha}
\DeclareMathOperator{\Aut}{Aut}
\newcommand{\cC}{\mathcal C}
\newcommand{\C}{\mathbf C}
\newcommand{\cD}{\mathcal D}
\newcommand{\de}{\delta}
\newcommand{\cF}{\mathcal F}
\newcommand{\cAF}{\mathcal{AF}}
\newcommand{\cG}{\mathcal G}
\newcommand{\scrG}{\mathscr G}
\DeclareMathOperator{\Gr}{Gr}
\newcommand{\fH}{\mathfrak H}
\newcommand{\scrH}{\mathscr H}
\DeclareMathOperator{\Hilb}{Hilb}
\newcommand{\ga}{\gamma}
\newcommand{\Ga}{\Gamma}
\DeclareMathOperator{\id}{id}
\newcommand{\fK}{\mathfrak K}
\newcommand{\fL}{\mathfrak L}
\newcommand{\scrL}{\mathscr L}
\newcommand{\La}{\Lambda}
\newcommand{\la}{\lambda}
\newcommand{\N}{\mathbf{N}}
\DeclareMathOperator{\ob}{ob}
\newcommand{\ot}{\otimes}
\newcommand{\ov}{\overline}
\DeclareMathOperator{\Path}{Path}
\newcommand{\Q}{\mathbf{Q}}
\newcommand{\R}{\mathbf{R}}
\DeclareMathOperator{\shift}{shift}
\DeclareMathOperator{\St}{State}
\newcommand{\cSF}{\mathcal{SF}}
\DeclareMathOperator{\source}{source}
\newcommand{\fT}{\mathfrak T}
\DeclareMathOperator{\target}{target}
\newcommand{\ti}{\tilde}
\DeclareMathOperator{\Tens}{Tens}
\newcommand{\cU}{\mathcal U}
\newcommand{\cV}{\mathcal V}
\newcommand{\varep}{\varepsilon}
\newcommand{\scrX}{\mathscr X}
\newcommand{\Z}{\mathbf{Z}}
\begin{document}

\title[Haagerup property for wreath products]{Haagerup property for wreath products constructed with Thompson's groups}
\thanks{
AB is supported by the Australian Research Council Grant DP200100067 and a University of New South Wales Sydney Starting Grant.}
\author{Arnaud Brothier}
\address{Arnaud Brothier\\ School of Mathematics and Statistics, University of New South Wales, Sydney NSW 2052, Australia}
\email{arnaud.brothier@gmail.com\endgraf
\url{https://sites.google.com/site/arnaudbrothier/}}
\maketitle

\begin{abstract}
Using recent techniques introduced by Jones we prove that a large family of discrete groups and groupoids have the Haagerup property. 
In particular, we show that if $\Ga$ is a discrete group with the Haagerup property, then the permutational restricted wreath product $\oplus_{\Q_2}\Ga\rtimes V$ obtained from the group $\Ga$ and the usual action of Richard Thompson's group $V$ on the dyadic rational $\Q_2$ of the unit interval has the Haagerup property.
\end{abstract}

\hspace{14cm} \textit{\`A C\'ecile} 


\section{Introduction}
In the 1930s Ore gave necessary and sufficient conditions for a semi-group to embed in a group, see \cite{Maltsev53}.
Similar properties can be defined for categories giving a calculus of fractions and providing the construction of a groupoid (of fractions) and in particular groups, see \cite{GabrielZisman67}.
Richard Thompson's groups $F\subset T\subset V$ arise in that way by considering certain diagrammatic categories of forests, see \cite{Brown87,Cannon-Floyd-Parry96} and \cite{Belk04,Jones16-Thompson} for the categorical framework.

Recently, Jones discovered a very general process that constructs a group action (called {\it Jones' action}) $\pi_\Phi:G_\cC\act X_\Phi$ from a functor $\Phi:\cC\to\cD$ where $\cC$ is a category admitting a calculus of fractions and where $G_\cC$ is the group of fractions associated to $\cC$ (and a fixed object) \cite{Jones17-Thompson,Jones16-Thompson}, see also the survey \cite{Brothier20-survey}.
The action remembers some of the structure of the category $\cD$ and, in particular, if the target category is the category of Hilbert spaces (with linear isometries for morphisms), then $\pi_\Phi$ is a unitary representation (in that case we call it a {\it Jones' representation}).
This provides large families of unitary representations of the Thompson's groups \cite{Brothier-Jones19,Brothier-Jones19bis,ABC19,Jones21,Brothier-Wijesena22}.
Certain coefficients of Jones' representations can be explicitly computed via algorithms which makes them very useful for understanding analytical properties of groups of fractions.
This article uses for the first time Jones' machinery for proving that new classes of groups (and groupoids) satisfy the Haagerup property.

{\bf Haagerup property.}
Recall that a \textit{discrete} group has the Haagerup property if it admits a net of positive definite functions vanishing at infinity and converging pointwise to one \cite{Akemann-Walter81}, see also the book \cite{CCJJV01} and the recent survey \cite{Valette17}.
It is a fundamental property having applications in various fields such as group theory, ergodic theory, operator algebras, and K-theory for instance.
The Haagerup property is equivalent to Gromov's a-(T)-meanability (i.e.~the group admits a proper affine isometric action on a Hilbert space) and, as suggested by Gromov's terminology, it is a strong negation of Kazhdan's Property (T): a discrete group having both properties is necessarily finite \cite{Gromov93}.
One additional motivation to study the Haagerup property is given by a deep theorem of Higson and Kasparov: a group having the Haagerup property satisfies the Baum-Connes conjecture (with coefficients) and in particular satisfies the Novikov conjecture \cite{HigsonKasparov01}.

{\bf Wreath products.}
The class of groups with the Haagerup property contains amenable groups and many other since it is closed under taking free products and even graph products \cite{AntolinDreesen13}.
However, it is not closed under taking extensions and in particular under taking wreath products. 
We call {\it wreath product} (instead of permutational restricted wreath product) a group of the form $\Ga\wr_{X}\La:=\oplus_X\Ga\rtimes \La$ where $\Ga,\La$ are groups, $X$ is a $\La$-set, $\oplus_X\Ga$ is the group of \textit{finitely supported} maps from $X$ to $\Ga$, and the action $\La\act \oplus_X\Ga$ consists in shifting indices using the $\La$-set structure of $X$.
It is notoriously a difficult problem to prove that a wreath product has the Haagerup property or not.
Cornulier, Stalder and Valette showed that, if $\Ga$ and $\La$ are discrete groups with the Haagerup property, then so does the wreath product $\oplus_{g\in \La} \Ga\rtimes\La$ and so does $\oplus_{g\in \La/\Delta} \Ga\rtimes \La$ where $\Delta$ is a normal subgroup of $\La$ satisfying that the quotient group $\La/\Delta$ has the Haagerup property \cite{CornulierStalderValette12}.
See also \cite{Cornulier18} where the later result was extended to commensurated subgroups $\Delta< \La.$
However, no general criteria exists for wreath products like $\oplus_{X} \Ga\rtimes \La$ where $X$ is \textit{any} $\La$-set. 
Moreover, there exist many examples of wreath products $\oplus_{X} \Ga\rtimes \La$ having relative Kazhdan's property (T) thus not having the Haagerup property even when $\Ga,\La$ have it, see \cite{CornulierStalderValette12}.

{\bf Thompson groups.}
There have been increasing results on analytical properties of Thompson's groups $F\subset T\subset V$: Reznikoff showed that Thompson's group $T$ does not have Kazhdan's Property (T) and Farley proved that $V$ has the Haagerup property \cite{Reznikoff01,Farley03-H}.
Independently, the works of Ghys-Sergiescu and Navas on diffeomorphisms of the circle implies that $F$ and $T$ do not have Kazhdan's Property (T) \cite{Ghys-Sergiescu87,Navas02}.
Using Jones' technology, Jones and the author constructed explicit positive definite maps on $V$. 
This permitted to give two independent short arguments proving that $V$ does not hat Kazhdan's Property (T) and that $T$ has the Haagerup property \cite{Brothier-Jones19}.

{\bf Wreath products using Thompson's groups.}
In this article we consider wreath products built from actions of Thompson's groups.
More precisely, let $\Q_2$ be the set of dyadic rationals in $[0,1)$ and consider the usual action $V\act \Q_2$. 
Given any group $\Ga$ we may form the wreath product 
$$\Ga\wr_{\Q_2}V:=\oplus_{\Q_2}\Ga\rtimes V.$$
More generally, if $\theta$ is an automorphism of $\Ga$ we may form the {\it twisted} wreath product 
$$\Ga\wr_{\Q_2}^\theta V$$
where the action $V\act \oplus_{\Q_2}\Ga$ is given by the formula:
$$(v\cdot a)(x) = \theta^{\log_2(v'(v^{-1}x))}(a(v^{-1}x)) \text{ for all } v\in V, a\in \oplus_{\Q_2}\Ga, x\in \Q_2.$$

Using Jones' technology we define in this article a net of coefficients vanishing at infinity on the larger group $V$ and thus reproving Farley's result.
By mixing these coefficients together with representations of a given group $\Ga$ (see below for details) we manage to prove the following result.

\begin{letterthm}\label{THA}
Consider a discrete group $\Ga$ and an automorphism of it $\theta\in\Aut(\Ga)$.
If $\Ga$ has the Haagerup property, then so does the twisted wreath product $\Ga\wr_{\Q_2}^\theta V.$
\end{letterthm}

{\bf New examples.}
Wreath products obtained in Theorem \ref{THA} were not previously known to have the Haagerup property. 
Moreover, we provide the first analytic but not geometric proof showing that a wreath product has the Haagerup property. Indeed, previous techniques were based on showing that the group admits a proper isometric action (for example using an action on a space with walls). We thank Adam Skalski for pointing this out.

Note that if $\Ga$ is finitely presented, then so does the wreath product by a result of Cornulier \cite{Cornulier06}.
Further, if $\Ga$ satisfies the homological (resp.~topological) finiteness property of being of type $FP_m$ (resp.~$F_m$) for any $m\geq 1$ or $m=\infty$, then so does the wreath product $\Ga\wr_{\Q_2}V$ by Bartholdi, Cornulier, and Kochloukova \cite{Bartholdi-Cornulier-Kochloukova15}, see also \cite[Section 4.3]{Brothier22-HPM}.
We obtain the first examples of finitely presented wreath products (or of any type $F_m$ or $FP_m$ with $m\geq 2$) that have the Haagerup property for a nontrivial reason that is:
the group acting (here $V$) is nonamenable and the base space (here $\Q_2$) is not finite.
We are grateful to Yves de Cornulier for making this observation.

{\bf Pairwise non-isomorphic examples.}
Since the class of groups satisfying the Haagerup property is closed under taking subgroups we obtain the same statement in Theorem \ref{THA} when we replace $V$ by the smaller Thompson's groups $F$ and $T$.
Moreover, note that we obtain infinitely many pairwise non-isomorphic new examples. 
Indeed, we previously proved that if $\Ga\wr_{\Q_2}^\theta V$ is isomorphic to $\ti\Ga\wr_{\Q_2}^{\ti\theta} V,$ then there exists an isomorphism $\beta:\Ga\to\ti\Ga$ and $h\in \ti\Ga$ satisfying $\ti\theta= \ad(h)\circ \beta\theta\beta^{-1}$, see \cite[Theorem 4.12]{Brothier22}.
The same conclusion holds when $V$ is replaced by $F$ or $T$.

We were able to prove Theorem \ref{THA} because $\Ga\wr_{\Q_2}^\theta V$ is the fraction group of a certain category to which we can apply efficiently Jones' technology.
These specific groups previously appeared independently in two other frameworks. 
Indeed, Tanushevski considered those as well as Witzel and Zaremsky \cite{Tanushevski16,Witzel-Zaremsky18}.
Note that the approach of Witzel and Zaremsky, known as {\it cloning systems}, is a systematisation of a construction due to Brin of the so-called braided Thompson group \cite{Brin07}. 
We refer the reader to the appendix of \cite{Brothier21} for an extensive discussion on these three independent constructions.

A similar diagrammatic construction provides the following groups 
$$C(\mathfrak C,\Ga)\rtimes V$$
where $\mathfrak C:=\{0,1\}^\N$ is the Cantor space and $C(\mathfrak C,\Ga)$ the group of all continuous maps from $\mathfrak C$ to $\Ga$ (i.e.~the locally constant maps) equipped with the pointwise multiplication.
The action $V\act C(\mathfrak C,\Ga)$ is the one induced by the classical action $V\act\mathfrak C$ on the Cantor space.
Even if these groups arise similarly from categories than the wreath products of Theorem \ref{THA} we have been unable to understand their analytic properties leading to the following problem.

\begin{letterprob}\label{prob}
Assume that $\Ga$ is a discrete group with the Haagerup property. 
Is is true that $C(\mathfrak C,\Ga)\rtimes V$ has the Haagerup property?
\end{letterprob}

We refer the reader to \cite{Brothier21} where we extensively study this specific class of groups. 

{\bf Proof of the main result.}
The proof is made in three steps. 
Step one: we construct a family of functors starting from the category of binary symmetric forests (the category for which Thompson's group $V$ is the group of fractions) to the category of Hilbert spaces giving us a net of positive definite coefficients on $V$.
We prove that this net is an approximation of the identity satisfying the hypothesis of the Haagerup property and thus reproving Farley's result that $V$ has the Haagerup property \cite{Farley03-H}.

Step two: given any group $\Ga$ we construct a category with a calculus of left-fractions whose group of fractions is isomorphic to the wreath product $\oplus_{\Q_2} \Ga\rtimes V.$ 
Elements of $V$ are described by (equivalence classes) of triples $(t,\pi,t')$ where $t,t$ are trees with same number of leaves and $\pi$ a bijection between the leaves of $t$ and leaves of $t'$.
For the larger group $\oplus_{\Q_2} \Ga\rtimes V$ we have a similar description with an extra data being a labeling of the leaves of $t,t'$ by elements of the group $\Ga.$ 

Step three: given a unitary representation of $\Ga$ and a functor of step one we construct a functor starting from the larger category constructed in step two and ending in Hilbert spaces. This provides a net of coefficients for the wreath product indexed by representations of $\Ga$ and functors of step one. We then extract from those coefficients a net satisfying the assumptions of the Haagerup property.

Step two is not technically difficult but resides on the following key observation:
given \textit{any} functor $\Xi:\cF\to\Gr$ from the category of forests to the category of groups we obtain, using Jones' machinery, an action $\al_\Xi:F\act \scrG_\Xi$ of Thompson's group $F$ on a certain limit group $\scrG_\Xi$. 
In certain cases (for example when $\Xi$ is monoidal) we can extend $\al_\Xi$ into a $V$-action.
We observe that there exists a category $\cC_\Xi$ whose group of fractions is isomorphic to the semi-direct product $\scrG_\Xi\rtimes_{\al_\Xi}V$ and this observation works more generally whatever the initial category is, see Remark \ref{rem:LargeCat}.
Moreover, the category $\cC_\Xi$ and its group of fractions have very explicit forest-like descriptions allowing us to extend techniques built to study Thompson's group $V$ to the larger group of fractions of $\cC_\Xi$.
By choosing wisely the functor $\Xi$ we obtain that the group of fractions of $\cC_\Xi$ is isomorphic to $\oplus_{\Q_2} \Ga\rtimes V.$ 
This procedure shows that certain semi-direct products $\scrG\rtimes V$ (or more generally $\scrG\rtimes G_\cD$ where $G_\cD$ is a group of fractions) have a similar structure than $V$ (resp.~$G_\cD$) and thus we might hope that certain properties of $V$ (resp.~$G_\cD$) that are not necessarily closed under taking extension might still be satisfied by $\scrG\rtimes V$ (resp.~$\scrG\rtimes G_\cD$).
Note that the groups appearing in Problem \ref{prob} arise in that way.

The main technical difficulty of the proof of Theorem \ref{THA} resides in steps one and three; in particular in showing that the coefficients are vanishing at infinity. 
In step one, we define functors $\Phi:\cF\to\Hilb$ from binary forests to Hilbert spaces such that the image $\Phi(t)$ of a tree $t$ with $n+1$ leaves is a sum of $2^n$ operators. 
We let this operator acting on a vector obtaining  a sum of $2^n$ vectors. 
To this functor we associate a coefficient for Thompson's group $V$ where a group element described by a fraction of symmetric trees with $n+1$ leaves is sent to $2^n\times 2^n$ inner products of vectors. 
We show that if the fraction is irreducible, then most of those inner products are equal to zero implying that the coefficient vanishes at infinity.
In step three we adapt this strategy to a larger category where leaves of trees are decorated with element of the group $\Ga$ that requires the introduction of more sophisticated functors. 
This extension of step one is not straightforward.
One of the main difficulty comes from the fact that fractions of decorated trees are harder to reduce. 
For example, there exists a sequence of tree $t_n$ with $n$ leaves such that $\dfrac{g_nt_n}{t_n}$ is a reduced fraction where $g_n$ has only one nontrivial entry equal to a fix $x\in\Ga$ (see Section \ref{sec:largercat} for notations).
If we forget $g_n$, then the fraction $\dfrac{t_n}{t_n}$ corresponds to the trivial element of Thompson's group $F$.
Therefore, a naive construction of a functor that would treat independently data of trees and elements of $\Ga$ cannot produce coefficients that vanishes at infinity since it will send $\dfrac{g_nt_n}{t_n}$ to a nonzero quantity depending only on $x$.

The argument works identically for \textit{countable} and \textit{uncountable} discrete groups $\Ga$.
Interestingly, the coefficients of Thompson's group $V$ appearing in step one are not the one constructed by Farley nor the one previously constructed by the author and Jones but coincide when we restrict those coefficients to the smaller Thompson's group $T$, see Remark \ref{rem:Farley} and the original articles \cite{Farley03-H,Brothier-Jones19}.

We could have given a single proof showing that if $\Ga$ has the Haagerup property, then so is the associated (possibly twisted) wreath product $\Ga\wr_{\Q_2}V.$
Although, for pedagogical reasons we choose to provide several proofs for various groups. 
This permits to understand easily the scheme of the proof and to appreciate the gap of difficulties between various cases. We thus prove the Haagerup property for $F$, then for $T$, then for $V$, then for $\Ga\wr_{\Q_2}V$, and finally for a twisted version of it. The largest gaps of technicality resides between $T$ and $V$ and between $V$ and the wreath product.

The proof of Theorem \ref{THA} is based on a categorical and functorial approach that is more natural to use for studying \textit{groupoids}. 
We present such a groupoid approach allowing now {\it $k$-ary forests} rather than only {\it binary trees}.
This leads to the following theorem:

\begin{letterthm}\label{THB}
Consider a triple $(\Ga,\theta, k)$ where $\Ga$ is a group, $\theta:\Ga\to\Ga$ an injective morphism, and $k\geq 2.$
There exists a unique monoidal category $\cC$ (see Section \ref{sec:largercat}) whose objects are the natural numbers and morphisms from $n$ to $m$ are $k$-ary forests with $n$ roots, $m$ leaves together with a permutation of the leaves and a labelling of the leaves with elements of $\Ga$. 
Moreover, the composition of morphisms satisfies the relation 
$$Y_k\circ g = (\theta(g),e,\cdots,e)\circ Y_k$$ where $g\in \Ga$ and $Y_k$ is the unique $k$-ary tree with $k$ leaves. 

If $\cG_\cC$ is the universal groupoid of $\cC$ and $\Ga$ is a discrete group that has the Haagerup property, then $\cG_\cC$ has the Haagerup property.
\end{letterthm}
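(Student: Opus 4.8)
The plan is to exhibit the Haagerup property for the universal groupoid $\cG_\cC$ by constructing a net of normalised positive definite functions on $\cG_\cC$ which converges pointwise to the constant function $1$ and which vanishes at infinity, fibrewise over the unit space $\N$. Everything rests on Jones' machinery, assumed above: any functor $\Phi\colon\cC\to\Hilb$ into the category of Hilbert spaces with isometries yields a representation of $\cG_\cC$, and the matrix coefficient of this representation with respect to a distinguished unit vector field over the objects is automatically a positive definite function on the groupoid. The problem therefore reduces to designing a two-parameter family of functors $\Phi_{t,s}$ whose coefficients $\varphi_{t,s}$ have the two required asymptotics. This is the groupoid and $k$-ary incarnation of the three-step scheme used for Theorem \ref{THA}: step two is already carried out, being precisely the description of $\cC$ and its universal groupoid given in the statement, so the work lies in adapting steps one and three.

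First I would generalise step one to $k$-ary symmetric forests. For a parameter $t>0$ I would define a functor $\Phi_t$ from the category of (undecorated) $k$-ary symmetric forests to $\Hilb$ sending a tree with $m$ leaves to a sum of unit vectors indexed by a set $S$ whose cardinality grows exponentially in $m$, the parameter $t$ governing the pairwise overlaps of these vectors. The associated coefficients $\varphi_t$ then form a net of positive definite functions on the $k$-ary analogue of Thompson's group $V$ (and, more to the point, on the corresponding undecorated universal groupoid): one checks that $\varphi_t\to 1$ pointwise as $t\to 0$, while for each fixed $t>0$ the coefficient vanishes at infinity. This already recovers Farley's theorem in the $k$-ary groupoid setting.

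Next I would feed in the group $\Ga$ and the endomorphism $\theta$, adapting step three. Fix a net $(\psi_s)_s$ of positive definite functions witnessing the Haagerup property of $\Ga$, with GNS data $(\sigma_s,\scrH_s,\xi_s)$. The defining relation $Y_k\circ g=(\theta(g),e,\cdots,e)\circ Y_k$ forces any functor out of $\cC$ to intertwine a label $g$ below a caret with $\theta(g)$ on the leftmost leaf above it; iterating, a label attached to a leaf must be read through a power of $\theta$ that increases with the depth of that leaf. I would accordingly define $\Phi_{t,s}$ by decorating the vectors produced by $\Phi_t$ with the group data, inserting at each leaf the operator $\sigma_s$ precomposed with the power of $\theta$ prescribed by that leaf's depth. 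Functoriality then reduces to compatibility with the single relation above, and this is exactly where injectivity of $\theta$ is needed to keep the construction well defined. The coefficients $\varphi_{t,s}$ of $\Phi_{t,s}$ constitute the candidate net on $\cG_\cC$; pointwise convergence to $1$ follows by letting $t\to0$ and $\psi_s\to1$ and combining the approximate-identity behaviour of $(\varphi_t)$ and $(\psi_s)$, after a routine diagonal extraction.

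I expect the genuine obstacle to be the vanishing at infinity of $\varphi_{t,s}$. A groupoid arrow is represented by a reduced fraction of decorated $k$-ary forests, and its coefficient expands as a sum of $|S|\times|S'|$ inner products, where $S$ and $S'$ are the exponentially large index sets attached to the numerator and denominator forests. The task is to prove that, for a reduced fraction whose forests have many leaves, all but a vanishing proportion of these inner products are zero, so that $\varphi_{t,s}$ tends to $0$ along any sequence of arrows leaving every finite set. The subtlety, flagged in the introduction, is that decorated fractions are harder to reduce than bare ones: a nontrivial $\Ga$-label can survive in a reduced fraction whose underlying forest fraction is trivial, so a functor treating tree data and group data independently would give coefficients bounded below by a quantity depending only on that label, hence not vanishing. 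The depth-dependent insertion of $\sigma_s\circ\theta^{\,\bullet}$ is precisely what repairs this: since $\theta$ is injective, a label that survives reduction is driven to arbitrarily deep leaves as the number of leaves grows, where it is read by a high power of $\theta$ and thereby forces the corresponding summands to be orthogonal. Making this orthogonality count quantitative, and checking that the positive definiteness and $C_0$ estimates hold uniformly in the object $n\in\N$ so as to descend to the groupoid fibrewise, is the technical core of the argument; the remaining verifications are straightforward transcriptions of the group case of Theorem \ref{THA}.
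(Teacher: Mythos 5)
There is a genuine gap, and it sits exactly at the sentence ``Functoriality then reduces to compatibility with the single relation above, and this is exactly where injectivity of $\theta$ is needed to keep the construction well defined.'' First, a monoidal functor $\Phi_{t,s}:\cC\to\Hilb$ must assign to a label $g\in\Ga=\cC(1,1)$ a single operator, independent of whatever tree it is later composed with; a depth-dependent reading of labels can only emerge through the defining relation, i.e.\ through the intertwining constraint $\Phi_{t,s}(Y_k)\circ\Phi_{t,s}(g)=(\Phi_{t,s}(\theta(g))\ot \id^{\ot k-1})\circ\Phi_{t,s}(Y_k)$, which generic GNS data $(\sigma_s,\scrH_s,\xi_s)$ do not satisfy. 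Second, your prescription is inconsistent as stated: in $\cG_\cC$ the same arrow admits the presentations $\dfrac{(\theta(g),e,\cdots,e)\circ Y_k}{Y_k}=\dfrac{(\theta^2(g),e,\cdots,e)\circ (Y_k\ot I^{\ot k-1})\circ Y_k}{(Y_k\ot I^{\ot k-1})\circ Y_k}$, so it carries the label $\theta^d(g)$ at a leaf of depth $d$ for every $d\geq 1$; reading a label at depth $d$ through the \emph{positive} power $\theta^d$ assigns these presentations the values governed by $\psi_s(\theta^2(g))$ and $\psi_s(\theta^4(g))$ respectively, which differ in general. The only consistent reading is through the partial inverses $\theta^{-d}$, and there injectivity gives uniqueness of preimages but not existence: a label $h\notin\theta^d(\Ga)$ sitting at depth $d$ (which does occur in reduced fractions) has no reading at all. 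What your construction actually needs is surjectivity of $\theta$, and no choice of $(\psi_s)_s$ repairs this.

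This is precisely why the paper takes a detour that your proposal skips. It first embeds $\Ga$ into the direct limit $\ov\Ga=\varinjlim\,(\Ga\xrightarrow{\theta}\Ga\xrightarrow{\theta}\cdots)$, on which $\theta$ extends to an \emph{automorphism} $\ov\theta$ (Claims 1 and 2 in the proof of Corollary \ref{cor:H}); it then invokes that $\ov\Ga$, being an increasing union of copies of $\Ga$, is again discrete with the Haagerup property \cite[Proposition 6.1.1]{CCJJV01} --- a step that must be recognized, since positive definite $C_0$ functions on $\Ga$ do not extend to $\ov\Ga$ for free. Your depth-dependent powers then do appear, but as a change of variables rather than inside a functor: $g_j\mapsto\ov\theta^{\,d_j}(g_j)$, with $d_j$ the depth of the $j$th leaf, is an isomorphism between the categories built from $(\ov\Ga,\id)$ and from $(\ov\Ga,\ov\theta)$, and it is bijective only because $\ov\theta$ is onto. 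After this reduction to $\theta=\id$, the machinery of Theorem \ref{THA} applies with the groupoid vectors $N^{-1/2}\oplus_{n=1}^N\xi\ot\de_e$, and the Haagerup property descends to $\cG_\cC$ as a subgroupoid of $\cG_{\ov\cC}$ (Proposition \ref{prop:gpingpoid}). Note finally that the decay mechanism is not the one you describe: after the reduction no powers of $\theta$ remain anywhere, and vanishing at infinity comes from the factors $|\phi_\Ga(g_j)|^{L_j(\tau)}$ and the powers of $\al^2$ accumulated along the right-edge prefixes of paths (the Claim in the proof of Proposition \ref{prop:Czero}), together with the auxiliary normalization $|\phi_\Ga(g)|\leq c<1$ for all $g\neq e$, which your sketch also omits.
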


Note that the groups appearing in Problem \ref{prob} corresponds to the category built from the relation $Y\circ g=(g,g)\circ Y$ for $g\in\Ga.$

If $\cG_{\cSF_k}$ is the universal groupoid of the category of $k$-ary symmetric forests, then the automorphism group (i.e.~the isotropy group) $\cG_{\cSF_k}(r,r)$ of the object $r$ is isomorphic to the Higman-Thompson group $V_{k,r}$, see \cite{Higman74,Brown87}.
Further, by adding decoration of the leaves with a group $\Ga$ and setting $\theta=\id_\Ga$ the identity, we obtain that the isotropy group at the object $r$ is isomorphic to the wreath product 
$$\Ga\wr_{Q_k(0,r)}V_{k,r}=\oplus_{\Q_k(0,r)} \Ga\rtimes V_{k,r}$$ 
where $V_{k,r}\act \Q_k(0,r)$ is the usual action of Higman-Thompson's group $V_{k,r}$ on the set of $k$-adic rationals inside $[0,r)$.
If $\theta$ is a nontrivial automorphism, then we obtain a twisted wreath product similarly than in the binary case.

\begin{lettercor}\label{COR}
Let $\Ga$ be a discrete group with the Haagerup property and $\theta\in\Aut(\Ga)$ an automorphism.
Denote by $\Ga\wr_{\Q_k(0,r)}^\theta V_{k,r}$ the twisted wreath product associated to the usual action $V_{k,r}\act \Q_k(0,r)$ and $\theta$ for $k\geq 2,r\geq 1.$
We have that $\Ga\wr_{\Q_k(0,r)}^\theta V_{k,r}$ has the Haagerup property.
\end{lettercor}

This corollary generalises Theorem \ref{THA} which corresponds to the case $k=2$ and $r=1$.

Apart from the introduction this article contains five other sections and a short appendix.
In Section \ref{sec:prelim} we introduce all necessary background concerning Thompson's groups, groups of fractions and Jones' actions. 
We then explain how to build larger categories from functors and how their group of fractions are isomorphic to certain wreath products.
In Section \ref{sec:FT} we provide short and simple proofs that $F$ and $T$ have the Haagerup property by constructing an explicit net of linear isometries and by considering associated positive definite maps. We then easily observe that they vanish at infinity and converge pointwise to 1.
In Section \ref{sec:VH}, we prove that Thompson's group $V$ has the Haagerup property by refining substantially the proofs for $F$ and $T$ but by keeping the same strategy. It is still easy to see that the positive definite maps converge pointwise to 1. Although, it is much harder to show that they vanish at infinity.
In Section \ref{sec:wreathprod}, we prove Theorem \ref{THA}.
We explain how to build matrix coefficient on larger fraction groups. We then follow a similar but more technical strategy.
In Section \ref{sec:groupoidap}, we adopt a groupoid approach. We introduce all necessary definitions and constructions that are easy adaptations of the group case. We then prove Theorem \ref{THB} and deduce Corollary \ref{COR}.
In a short appendix we provide a different description of Jones' actions using a more categorical language.

\subsection*{Acknowledgement}
We warmly thank Sergei Ivanov, Richard Garner and Steve Lack for enlightening discussions concerning category theory. We thank Adam Skalski for making key comments to us regarding the results and techniques used in this article.
We are grateful to Yves de Cornulier and Vaughan Jones for very constructive comments on an earlier version of this manuscript and to Dietmar Bisch, Matt Brin and Yash Lodha for their enthusiasm and encouragements.
Finally, we thank Christian de Nicola Larsen for pointing out some typos and technical subtelties in an earlier version of the manuscript.

\section{Preliminaries}\label{sec:prelim}

\subsection{Groups of fractions}
We say that a category $\cC$ is {\it small} if its collections of objects and morphisms are both sets. The collection of morphisms of $\cC$ from $a$ to $b$ is denoted by $\cC(a,b)$. If $f\in\cC(a,b)$, then we say that $a$ is the source and $b$ the target of $f$. As usual we compose from right to left, thus the source of $g\circ f$ is the source of $f$ and its target the target of $g.$
When we write $g\circ f$ we implicitly assume that $g$ is composable with $f$ meaning that the target of $f$ is equal to the source of $g.$
We sometime write $gf$ for $g\circ f.$

\subsubsection{General case}
We explain how to construct a group from a small category together with the choice of one of its object.
We refer to \cite{Jones16-Thompson} for details on this specific construction and to \cite{GabrielZisman67} for the general theory of calculus of fractions.

Let $\cC$ be a small category and $e$ an object of $\cC$ satisfying:
\begin{enumerate}
\item (Left-Ore's condition at $e$) If $p,q$ have same source $e$, then there exists $h,k$ such that $hp=kq.$
\item (Weak left-cancellative at $e$) If $pf=qf$ where $f$ has source $e$, then there exists $g$ such that $gp=gq.$\\
\end{enumerate}
We say that such a category admits a {\it calculus of left-fractions in $e$}.

\begin{proposition}
Let $G_\cC$ be the set of pairs $(t,s)$ of morphisms with source $e$ and common target that we quotient by the equivalence relation generated by $(t,s)\sim (ft,fs)$. Denote by $\dfrac{t}{s}$ the equivalence class of $(t,s)$ that we call a fraction.
The set of fractions admits a multiplication $\cdot$ such that 
$$\dfrac{t}{s}\cdot\dfrac{t'}{s'} =\dfrac{ft}{f's'} \text{ for any $f,f'$ satisfying } fs = f't' .$$
This confers a group structure to $G_\cC$ such that $\dfrac{s}{t}$ is the inverse of $\dfrac{t}{s}$ and thus $\dfrac{t}{t}$ is the identity for all $t$.
We call $G_\cC$ the {\it group of fractions} of $(\cC,e)$ or of $\cC$ if the context is clear. 
\end{proposition}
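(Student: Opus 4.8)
The plan is to carry out the classical calculus of left fractions (as in \cite{GabrielZisman67,Jones16-Thompson}), organized around a concrete description of the equivalence relation. The first step is to show that $(t,s)\sim(t',s')$ if and only if there exist morphisms $f,f'$, with common source the target of $t,s$ (resp.\ of $t',s'$) and a common target, such that $ft=f't'$ and $fs=f's'$; call this relation $\approx$. That $\approx$ contains the elementary moves $(t,s)\mapsto(gt,gs)$ and is reflexive and symmetric is immediate (take $f'=\id$ and $f=g$, then swap), so the inclusion $\sim\subseteq\approx$ reduces to checking that $\approx$ is transitive, whereas $\approx\subseteq\sim$ is the one-line factorization $(t,s)\sim(ft,fs)=(f't',f's')\sim(t',s')$. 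Transitivity of $\approx$ is the first place the hypotheses enter: given connecting pairs $(f,f')$ and $(g,g')$ meeting at $(t',s')$, one amalgamates the two inner morphisms into a common multiple using Left-Ore's condition and composes, producing the pair required for $(t,s)$ and $(t'',s'')$.

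With this description in hand, the second step is the well-definedness of the product. Existence of $f,f'$ with $fs=f't'$ is exactly Left-Ore's condition applied to $s$ and $t'$, which both have source $e$. Independence of the chosen amalgamation is the crux: given two pairs $(f,f')$ and $(\bar f,\bar f')$ with $fs=f't'$ and $\bar f s=\bar f' t'$, I would first dominate both by a single common multiple via Left-Ore and then invoke weak left-cancellativity at $e$ to identify the two candidate products $\dfrac{ft}{f's'}$ and $\dfrac{\bar f t}{\bar f' s'}$ modulo $\approx$; independence of the chosen representatives $(t,s),(t',s')$ of the two fractions is then a short computation feeding into the same two moves. I expect this bookkeeping to be the main obstacle: the delicate part is separating the role of Left-Ore's condition, which only produces a common target, from that of weak left-cancellativity, which is what actually collapses the resulting ambiguity. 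This is precisely why both hypotheses, and not just one, are needed.

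The final step is to verify the group axioms. Taking $f=f'=\id$ in the product formula gives $\dfrac{t}{s}\cdot\dfrac{s}{t}=\dfrac{t}{t}$, and a similar one-line substitution shows $\dfrac{t}{t}$ is a two-sided unit, so the formulas $\dfrac{t}{t}=1$ and $\left(\dfrac{t}{s}\right)^{-1}=\dfrac{s}{t}$ both fall out of the concrete description with essentially no work. Associativity is the last and most laborious point: comparing $\left(\dfrac{t}{s}\dfrac{t'}{s'}\right)\dfrac{t''}{s''}$ with $\dfrac{t}{s}\left(\dfrac{t'}{s'}\dfrac{t''}{s''}\right)$ requires choosing a simultaneous common multiple of the three denominators, obtained by iterating Left-Ore's condition, after which both bracketings reduce to the same fraction up to the now-routine cancellation moves. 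No genuinely new idea is needed beyond the first two steps, so I would present associativity as a diagram chase and defer the remaining verifications to \cite{GabrielZisman67,Jones16-Thompson}.
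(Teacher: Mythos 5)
Your overall organization (first identify $\sim$ with the explicit common-multiple relation $\approx$, then run well-definedness and the group axioms through that description) is a legitimate route, and your Step 2 is essentially the paper's argument: Ore at $e$ to dominate the two amalgamations, then weak left-cancellativity at $e$ to collapse the ambiguity. But your Step 1 contains a genuine gap: transitivity of $\approx$ cannot be obtained ``using Left-Ore's condition'' alone, and in particular you cannot amalgamate the two inner morphisms $f'$ and $g$ (where $ft=f't'$, $fs=f's'$, $gt'=g't''$, $gs'=g's''$) into a common multiple by Ore, because the hypotheses only give Ore's condition \emph{at} $e$: the morphisms $f'$ and $g$ have source equal to the common target of $t'$ and $s'$, which is in general not $e$. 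The only legal move is to apply Ore to the composites $f't'$ and $gt'$, which do have source $e$, producing $h,k$ with $hf't'=kgt'$. That, however, only controls the $t$-coordinate; to get the matching $s$-coordinate equation you must first upgrade $hf't'=kgt'$ to $bhf'=bkg$ for some $b$, and that upgrade is exactly weak left-cancellativity at $e$. Only then does the pair $(bhf,\,bkg')$ witness $(t,s)\approx(t'',s'')$: indeed $bhft=bhf't'=bkgt'=bkg't''$ and $bhfs=bhf's'=bkgs'=bkg's''$. So transitivity needs both hypotheses, not just Ore --- which is precisely the division of labor you yourself describe in Step 2; your Step 1 contradicts it.

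Once this is repaired, the rest of your plan goes through. For comparison: the paper skips the $\approx$ description entirely, works directly with the generated relation, and proves only that the product is independent of the chosen amalgamating pair $(f,f')$, by the same Ore-then-cancel-twice maneuver (Ore gives $h,k$ with $hfs=kgs$; cancellation gives $b$ with $bhf'=bkg'$; cancellation again gives $a$ with $abhf=abkg$), after which it declares the remaining verifications easy. Your version, with the transitivity step done correctly, proves slightly more --- the explicit form of $\sim$ is a useful lemma that makes the later verifications mechanical --- but the cost is that the two-axiom interplay must be invoked already at that first stage.
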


\begin{proof}
Given two pairs $(t,s),(t',s')$ as above there exists by Ore's condition at $e$ some morphisms $f,f'$ satisfying $fs=f't'.$
We write $(t,s)_{f,f'}(t',s')$ for the product giving $(ft,f's').$
We claim that $\dfrac{ft}{f't'}$ only depends on the classes $\dfrac{t}{s}$ and $\dfrac{t'}{s'}.$
Consider another pair of morphisms $g,g'$ satisfying $gs=g't'$ and observe that $(t,s)_{g,g'}(t',s')=(gt,g's').$
By Ore's condition at $e$ there exists $h,k$ such that $hfs=kgs.$
Observe that $$hf't' = hfs = kgs = kg't'.$$
By the weak cancellation property at $e$ there exists $b$ such that $bhf'=bkg'$.
Moreover, since $hfs=kgs$ we have $bhfs=bkgs$ and thus by the weak cancellation property at $e$ there exists $a$ such that $abhf=abkg.$
We obtain the equalities:
\begin{enumerate}
\item $bhf' = bkg'$;
\item $abhf=abkg$.
\end{enumerate}
Observe that 
\begin{align*}
\dfrac{ft}{f's'} & = \dfrac{bhft}{bhf's'} = \dfrac{bhft}{bkg's'} \text{ by } (1)\\
& = \dfrac{abhft}{abkg's'}  = \dfrac{abkgt}{abkg's'} \text{ by } (2)\\
& = \dfrac{gt}{g's'}.
\end{align*}
This proves the claim. The rest of the proposition follows easily.
\end{proof}

When $\cC$ satisfies the property of above for any of its object we say that it admits a {\it calculus of left-fractions}. 
This is then the right assumptions for considering a groupoid of fractions, see Section \ref{sec:univgroupoid}. 
We will be mostly working with categories of forests defined below and refer to \cite{Cannon-Floyd-Parry96,Belk04} for more details about this case.
Note that those categories satisfy stronger axioms as they are cancellative (right and left) and satisfies Ore's property at any object. 

\begin{remark}
We have followed the original conventions appearing in the first articles on Jones' technology. 
Unfortunately they are different from the more recent articles when we consider right-fractions instead of left-fractions.
Note that $\dfrac{t}{s}$ corresponds formally to $t^{-1}\circ s$ and is sometime denoted $[t,s]$. 
In more recent articles we often write Frac$(\cC)$ for the fraction groupoid of a category $\cC$ and Frac$(\cC,e)$ rather than $G_\cC$ for the fraction group of $\cC$ at the object $e$.\\
The formal notation permits to check easily the identities $\dfrac{t}{s}\cdot \dfrac{s}{u}=\dfrac{t}{u}$ by computing $(t^{-1}\circ s) \circ (s^{-1}\circ u)$ and check that $\dfrac{f\circ t}{f\circ s}=\dfrac{t}{s}$ by computing $(f\circ t)^{-1}\circ (f\circ s).$
\end{remark}

\subsubsection{Categories of forests and Thompson's groups}\label{sec:forest}

{\bf Trees and forests.}
Let $\cF$ be the category of finite ordered rooted binary forests whose objects are the nonzero natural numbers $\N^*:=\{1,2,\cdots\}$ and morphisms $\cF(n,m)$ the set of forests with $n$ roots and $m$ leaves. 
We represent them as diagram in the plane $\R^2$ whose roots and leaves are distinct points in $\R\times\{0\}$ and $\R\times \{1\}$ respectively and are counted from left to right starting from $1$.
For example 
\newcommand{\forest}{
\begin{tikzpicture}[baseline = .4cm]
\draw (0,0)--(0,1);
\draw (1,0)--(1,2/3);
\draw (1,2/3)--(2/3,1);
\draw (1,2/3)--(4/3,1);
\draw (2,0)--(2,1/3);
\draw (2,1/3)--(5/3,1);
\draw (2,1/3)--(7/3,1);
\draw (13/6,2/3)--(2,1);
\end{tikzpicture}
}
$$f = \ \forest \ $$
is a morphism from $3$ to $6$.
A vertex $v$ of a tree has either zero or two descendants $v_l,v_r$ that are placed on the top left and top right, respectively, of the vertex $v$. The edge joining $v$ and $v_l$ (resp.~$v_r$) is called a left-edge (resp.~a right-edge). 
We compose forests by stacking them vertically so that $f\circ q$ is the forest obtained by stacking $f$ on top of $q$  where the $i$-th {\it root} of $f$ is attached to the $i$-th {\it leaf} of $q$. 
We obtain a diagram in the strip $\R\times [0,2]$ that we rescale in $\R\times [0,1].$
For example, if 
\newcommand{\funfun}{
\begin{tikzpicture}[baseline=.2cm, scale = .4]
\draw (0,0)--(0,-.5);
\draw (0,0)--(-1,2);
\draw (-.5,1)--(0,2);
\draw (0,0)--(1,2);
\end{tikzpicture}
}
$$t = \ \funfun\ ,$$
then 
\newcommand{\compo}{
\begin{tikzpicture}[baseline = -.2cm, scale = .6]
\draw (1,-2)--(1,-2.5);
\draw (1,-2)--(0,0);
\draw (.5,-1)--(1,0);
\draw (1,-2)--(2,0);
\draw (0,0)--(0,1);
\draw (1,0)--(1,2/3);
\draw (1,2/3)--(2/3,1);
\draw (1,2/3)--(4/3,1);
\draw (2,0)--(2,1/3);
\draw (2,1/3)--(5/3,1);
\draw (2,1/3)--(7/3,1);
\draw (13/6,2/3)--(2,1);
\end{tikzpicture}
}
$$\begin{small}f\circ t = \ \compo\ .\end{small}$$

A {\it tree} is a forest with one root and conversely a forest with $n$ roots is nothing else than a list of $n$ trees.

{\bf Thompson's group $F$.}
The category $\cF$ admits a calculus of left-fractions. 
We consider the object $1$ and note that morphisms with source $1$ are trees.
The associated group of fractions $G_\cF$ is isomorphic to Thompson's group $F$.

{\bf Fraction.}
By definition, any element $g\in F$ can be expressed as a fraction $\dfrac{t}{s}$ where $t,s$ are trees with the same number of leaves say $n$.
Moreover, if $t'=f\circ t$ and $s'=f\circ s$ where $f$ is any forest having $n$ roots, then $g$ is also expressed by the fraction $\dfrac{t'}{s'}$.

{\bf Elementary forest.}
For any $1\leq i\leq n$ we consider the forest $f_{i,n}$ (denoted by $f_i$ if the context is clear) the forest with $n$ roots and $n+1$ leaves where the $i$-th tree of $f_{i,n}$ has two leaves and all other trees are trivial.
We say that $f_{i,n}$ is an {\it elementary forest}.
Here is an example:
\newcommand{\forestftwo}{
\begin{tikzpicture}[baseline=.4cm]
\draw (0,0)--(0,2/3);
\draw (0,2/3)--(-1/3,1);
\draw (0,2/3)--(1/3,1);
\draw (-2/3,0)--(-2/3,1);
\draw (2/3,0)--(2/3,1);
\draw (1,0)--(1,1);
\end{tikzpicture}
}
$$f_{2,4} = \ \forestftwo\ .$$
Note that every forest is a finite composition of elementary forests.

\begin{notation}
We write $\fT$ for the collection of all finite ordered rooted binary trees and by $Y=f_{1,1}$ the unique tree with two leaves and $I$ the unique tree with one leaf that we call the trivial tree.
By tree we always mean an element of $\fT$.
\end{notation}

{\bf Symmetric forests and Thompson's group $V$.}
Consider now the category of \textit{symmetric forests} $\cSF$ with objects $\N^*$ and morphisms 
$$\cSF(n,m)=\cF(n,m)\times S_m$$ 
where $S_m$ is the symmetric group of $m$ elements. We call an element of $\cSF(n,m)$ a {\it symmetric forest} and, if $n=1$, a {\it symmetric tree}.
Graphically we interpret a morphism $(p,\sigma)\in\cSF(n,m)$ as the concatenation of two diagrams.
On the bottom we have the diagram explained above for the forest $p$ in the strip $\R\times [0,1].$
The diagram of $\sigma$ is the union of $m$ segments 
$$[x_i , x_{\sigma(i)}+(0,1)], i=1,\cdots,m$$ 
in $\R\times [1,2]$ where the $x_i$ are $m$ distinct points in $\R\times \{1\}$ such that $x_i$ is on the left of $x_{i+1}.$
The full diagram of $(p,\sigma)$ is obtained by stacking the diagram of $\sigma$ on top of the diagram of $p$ such that $x_i$ is the $i$-th leaf of $p$.
If we consider the permutation $\tau$ such that $\tau(1)=2,\tau(2)=3, \tau(3)=1$, then its corresponding diagram is
\newcommand{\perm}{
\begin{tikzpicture}[baseline = .4cm]
\draw (0,0)--(1,1);
\draw (1,0)--(2,1);
\draw (2,0)--(0,1);
\end{tikzpicture}
}
$$\perm\ .$$
If $t=\funfun$, then the diagram associated to $(t,\tau)$ is
\newcommand{\taut}{
\begin{tikzpicture}[baseline = .4cm, scale=.5]
\draw (0,0)--(0,-.5);
\draw (0,0)--(-1,2);
\draw (-.5,1)--(0,2);
\draw (0,0)--(1,2);
\draw (-1,2)--(0,3);
\draw (0,2)--(1,3);
\draw (1,2)--(-1,3);
\end{tikzpicture}
}
$\taut \ .$

{\bf Two kinds of morphisms.}
We interpret the morphism $(p,\sigma)$ as the composition of the morphisms $(I_m,\sigma) \circ (p,\id)$ where $I_m$ is the trivial forest with $m$ roots and $m$ leaves (thus $m$ trivial trees next to each other) and $\id$ is the trivial permutation.
By identifying $\sigma$ with $(I_m,\sigma)$ and $p$ with $(p,\id)$ we obtain that $(p,\sigma)=\sigma\circ p.$
We have already defined compositions of forests in the description of the category $\cF$. The composition of permutations is the usual one. 
It remains to explain the composition of a forest with a permutation.
Consider a permutation $\tau$ of $n$ elements and a forest $p$ with $n$ roots and $m$ leaves and let $l_i$ be the number of leaves of the $i$-th tree of $p.$
We define the composition as:
$$p\circ \tau = S(p,\tau) \circ \tau(p),$$
where $\tau(p)$ is the forest obtained from $p$ by permuting its trees such that the $i$-th tree of $\tau(p)$ is the $\tau(i)$-th tree of $p$ and $S(p,\tau)$ is the permutation corresponding to the diagram obtained from $\tau$ where the $i$-th segment $[x_i , x_{\tau(i)} + (0,1)]$ is replaced by $l_{\tau(i)}$ parallel segments.
\newcommand{\permtilde}{
\begin{tikzpicture}[baseline = .4cm]
\draw (0,0)--(1,1);
\draw (1,0)--(2,1);
\draw (2,0)--(3,1);
\draw (3,0)--(4,1);
\draw (4,0)--(5,1);
\draw (5,0)--(6,1);
\draw (6,0)--(0,1);
\end{tikzpicture}
}
\newcommand{\foresttilde}{
\begin{tikzpicture}[baseline = .4cm]
\draw (2,0)--(2,1);
\draw (0,0)--(0,2/3);
\draw (0,2/3)--(-1/3,1);
\draw (0,2/3)--(1/3,1);
\draw (1,0)--(1,1/3);
\draw (1,1/3)--(2/3,1);
\draw (1,1/3)--(4/3,1);
\draw (7/6,2/3)--(1,1);
\end{tikzpicture}
}
{For example, if we consider the forest 
$$f = \forest$$ and the permutation $$\tau=\perm\ ,$$ then 
$$ f\circ \tau = S(f,\tau)\circ \tau(f)$$ 
where 
$$\tau(f) = \foresttilde$$ \text{ and } $$S(f,\tau) = \permtilde \ .$$
}
This is a category admitting a calculus of left-fractions whose group of fractions associated to $(\cSF,1)$ is isomorphic to Thompson's group $V$.
Note that the relations between forests and permutations can be interpreted as a Brin-Zappa-Sz\'ep product of the category of forests $\cF$ and the groupoid of all symmetric groups.
For more details on such products we refer the reader to the articles of Brin and of Witzel and Zaremsky \cite{Brin07,Witzel-Zaremsky18}.

{\bf Elements of $V$ as fractions.}
Any element of $V$ is an equivalence class of a pair of symmetric trees $\dfrac{\tau\circ t}{\sigma\circ s}.$
Observe that $\dfrac{\tau\circ t}{\sigma\circ s} = \dfrac{\sigma^{-1}\circ \tau\circ t}{s}.$
Hence, any element of $V$ can be written as $\dfrac{\sigma\circ t}{s}$ for some trees $t,s$ and permutation $\sigma.$
Note that formally the fraction $\dfrac{\tau\circ t}{\sigma\circ s}$ is equal to the signed path of morphisms $(\tau\circ t)^{-1} \circ (\sigma\circ s)=t^{-1}\circ \tau^{-1} \circ \sigma\circ s.$

{\bf Affine forests and Thompson's group $T$.}
Let $\Z/m\Z$ be the cyclic group of order $m$ identified as a subgroup of the symmetric group $S_m$ and consider the subcategory $\cAF\subset \cSF$ of \textit{affine} forests where 
$$\cAF(n,m)=\cF(n,m)\times \Z/m\Z.$$
It is a category admitting a calculus of left-fractions and the group of fractions associated to the objet $1$ is isomorphic to Thompson's group $T$.
We will often identify $\cF$ and $\cAF$ as subcategories of $\cSF$ giving embeddings at the group level $F\subset T\subset V$.

{\bf Reduced pair.}
We say that a pair of symmetric trees $( \tau\circ t  , \sigma\circ s )$ is \textit{reduced} if there are no other pairs $( \tau'\circ t' , \sigma'\circ s')$ in the same class such that $t'$ has strictly less leaves than $t$.

{\bf Monoidal structure.}
We equipped $\cSF$ with a monoidal structure $\ot$ that is 
$$n\ot m:=n+m$$ for objects $n,m$ and the tensor product of two symmetric forests 
$$(\sigma\circ f) \ot (\sigma'\circ f')=(\sigma\otimes\sigma')\circ (f\ot f')$$ 
consists in concatenating the two diagrams horizontally such that $(\sigma\circ f)$ is placed to the left of $(\sigma'\circ f').$
If we consider the tree and forest $t,f$ of above, then 
$$t\otimes f = \funfun \quad \forest \ .$$
This monoidal structure of $\cSF$ confers a monoidal structure on the smaller category $\cF$ but not on $\cAF$ as a product of cyclic permutations is in general not a cyclic permutation.

\begin{remark}Note that the common definition of a monoidal or tensor category demands that $\ot$ has a neutral element. Here, this can be added by considering the object $0$ and the empty diagram playing the role of $\id_0$.\end{remark}

{\bf Metric.}
We equip forests with the usual metric. Hence, an edge between two vertices if of length one. Now, recall that by convention the trivial tree $I$ has one root and one leaf that are equal and thus is of diameter zero. If $Y$ is the tree with two leaves, then each of its leaf is at distance one from the root. If we consider the tree $t=\funfun$, then its first leaf is a distance two from the root and the second and third leaves are at distance two and one from the root, respectively.

{\bf Order.}
We equip $\cF$ with a partial order $\leq$ defined as follows: 
$$s\leq t \text{ if there exists $f$ satisfying } t=f\circ s.$$
Note that if $s,t$ are trees, then $s\leq t$ if and only if $s$ is a rooted subtree of $t$. 
Moreover, the set of trees equipped with $\leq$ is directed, i.e.~for all trees $s,t$ there exists a third tree $z$ satisfying that $s\leq z$ and $t\leq z$.

\subsubsection{Classical actions of the Thompson's groups on the unit interval}\label{sec:VactQ}
We present the usual action of $V$ on the unit interval which explains the correspondence between trees and certain partitions of the unit interval.
Additional details can be found in \cite{Cannon-Floyd-Parry96}.

{\bf Standard dyadic interval and partition.}
Consider the infinite binary rooted tree $t_\infty$ and decorate its vertices by intervals such that the root corresponds to the half-open interval $[0,1)$ and the successors of a vertex decorated by $[d,d')$ are decorated by $[d,\tfrac{d+d'}{2})$ to the left and $[\tfrac{d+d'}{2} , d' )$ to the right.
\newcommand{\tinf}{
\begin{tikzpicture}
\node at (0,-.25) {$[0,1)$};
\draw (0,0)--(-2,1);
\draw (0,0)--(2,1);
\node at (-2,1.25) {$[0,1/2)$};
\draw (-2,1.5)--(-3,2.5);
\draw (-2,1.5)--(-1,2.5);
\node at (2,1.25) {$[1/2,1)$};
\draw (2,1.5)--(1,2.5);
\draw (2,1.5)--(3,2.5);
\node at (-3,2.75) {$[0,1/4)$};
\node at (-1,2.75) {$[1/4,1/2)$};
\node at (1,2.75) {$[1/2,3/4)$};
\node at (3,2.75) {$[3/4,1)$};
\node at (-3,3.25) {$\cdots$};
\node at (-1,3.25) {$\cdots$};
\node at (1,3.25) {$\cdots$};
\node at (3,3.25) {$\cdots$};
\end{tikzpicture}}
Here is the beginning of this labelled tree:
$$\tinf\ .$$
Intervals appearing in this tree are called {\it standard dyadic intervals} and form the set
$$\{ [\tfrac{a}{2^n}, \tfrac{a+1}{2^n}) : n\geq 0 , 0\leq a \leq 2^n-1 \}.$$
Consider a tree $t\in\fT$ and write $I_n$ for the interval corresponding to the $n$-th leaf of $t$ where $t$ is viewed as a rooted subtree of $t_\infty.$
We have that $\{I_1,\cdots,I_n\}$ is a partition of $[0,1)$ that we call a {\it standard dyadic partition}.

{\bf Action of $V$ on the unit torus.}
Now consider $g=\dfrac{\tau\circ t}{\sigma\circ s} \in V$ and the standard dyadic partitions $\{I_1,\cdots,I_n\}$ and $\{J_1,\cdots,J_n\}$ of $[0,1)$ associated to the trees $s$ and $t$ respectively. The element $g$ acting on $[0,1)$ is the unique piecewise linear function with positive constant slope on each $I_k$ that maps $I_{\sigma^{-1}(i)}$ onto $J_{\tau^{-1}(i)}$ for any $1\leq i\leq n.$
From this description of $V\act [0,1)$ we easily deduce that $T$ is the group of homeomorphisms of the unit torus that is piecewise affine with slopes powers of 2 and finitely many breakpoints while $F$ is the subgroup of $T$ fixing $0$ (and thus acting on $[0,1]$ by homeomorphisms).

{\bf Action of $V$ on the dyadic rationals.}
Put $\Q_2$ the set of dyadic rational in $[0,1)$ and observe that the action of $V$ on $[0,1)$ restricts to an action on $\Q_2$. 
This action will appear in the construction of the wreath product $\oplus_{\Q_2}\Ga\rtimes V$ of the main theorem.
Note that the action $V\act \Q_2$ is conjugated to the homogeneous action of $V\act V/V_{1/2}$ where $V_{1/2}$ is the stabiliser subgroup of the point $1/2$.

\subsection{Jones' actions}\label{sec:actions}

\subsubsection{General case}
Consider a small category $\cC$ admitting a calculus of left-fractions in a fixed object $e$, another category $\cD$ whose objects are {\it sets}, and a covariant functor $\Phi:\cC\to\cD$.
Consider the set of morphisms with source $e$ that we equip with the following order:
$$t\leq s \text{ if there exists $f$ satisfying } s=f\circ t.$$ 
This is the generalisation of the order we put on the set of trees at the end of Section \ref{sec:forest}.
Note that it is a directed set precisely because $\cC$ satisfies Ore's condition in $e$.
Given $t\in\cC(e,b)$, we form the set $X_t$ a copy of $\Phi(b)$ and consider the directed system $(X_t : t \text{ a morphism with source } e)$ with maps $\iota_t^{ft}:X_t\to X_{ft}$ given by $\Phi(f)$.
Let $\mathscr X$ be the inductive limit that we write $\varinjlim_{t, \Phi}X_t$ to emphasize the role of $\Phi.$
It can be described as $$\{ (t, x): t\in\cC(e,b), x\in\Phi(b),b\in\ob(\cC) \} / \sim$$
where $\sim$ is the equivalence relation generated by 
$$(t,x)\sim( ft,\Phi(f)(x)).$$
We often denote by $\dfrac{t}{x}$ the equivalence class of $(t,x)$ and call it a {\it fraction}.

\begin{definition}
Let $G_\cC$ be the group of fractions of $\cC$ at the object $e$.The {\it Jones action} $\pi_\Phi:G_\cC\act \mathscr X$ associated to the functor $\Phi:\cC\to\cD$ is defined by the following formula:
$$\pi_\Phi\left( \dfrac{t}{s} \right)  \dfrac{r}{x} := \dfrac{pt}{ \Phi(q)(x)} \text{ for $p,q$ satisfying } ps=qr.$$
\end{definition}

One can check that this formula does not depend on the choice of $p,q$ and thus the action is well-defined.

\begin{remark}
\begin{enumerate}
\item When $\cC$ is right-cancellative at $e$ and $t\leq s$, then there exists a unique $f$ satisfying $s=ft$. 
Although, when $\cC$ is only {\it weak} right-cancellative at $e$, then there may be several $f$ satisfying $s=ft$. 
We still obtain a directed system but to stay fully rigorous we should write $\iota_{t,f}$ rather than $\iota_t^s$ since there may be several maps going from $X_t$ to $X_s$.
\item Note that if $\cC$ admits a calculus of left-fractions (at any objects), then we can adapt the construction and obtaining an action of the whole groupoid of fractions, see Section \ref{sec:groupoidap}.
\item If we replace $X_t$ by the set of morphisms $\cD(\Phi(e),\Phi(\target(t)))$ in the construction, then we no longer need to assume that the objects of the category $\cD$ are sets. 
This was the original definition of Jones \cite{Jones16-Thompson}.
\item A similar construction can be done for contravariant functors $\Phi:\cC\to\cD$ leading to an action of $G_\cC$. 
Formally, this makes no difference since we may consider the opposite category of $\cD$ and recovering a covariant functor. Although, in practice we will obtain inverse systems and limits rather than direct systems and colimits.
For instance, if $\cD$ is the category of finite groups, then a covariant functor will typically provide an amenable discrete group while a contravariant functor will provide a profinite group.
\end{enumerate}
\end{remark}

\subsubsection{The Hilbert space case: representations and coefficients}
Let $\cD=\Hilb$ be the category of complex Hilbert spaces with linear isometries for morphisms.
Consider a functor $\Phi:\cC\to\Hilb$.
We often write $\fH_t=X_t$ for the Hilbert space associated to $t\in\cC(e,b)$.
The inductive limit has an obvious pre-Hilbert space structure that we complete into a Hilbert space and denote by $\scrH_\Phi=\varinjlim_{t,\Phi}\fH_t$.
The Jones action $\pi_\Phi:G_\cC\act \scrH_\Phi$ is a unitary representation that we call a {\it Jones' representation.}

Let $\fH$ be the Hilbert space $\Phi(e)$ associated to the chosen object $e$ that we consider as the subspace $\fH_{\id}$ of $\scrH_\Phi$ where $\id\in\cC(e,e)$ is the identity morphism.
Note that if $\xi$ is a vector of $\fH$ and $g=\dfrac{t}{s}\in G_\cC$ is a fraction, then
\begin{equation}\label{eq:coefdef}\langle \pi_\Phi\left(\dfrac{t}{s}\right) \xi, \xi\rangle = \langle \Phi(s)\xi, \Phi(t)\xi\rangle.\end{equation}
We will be considering exclusively those kind of coefficients that can be easily computed if one understand well the functor $\Phi.$
In particular, if $\Phi(n)$ is a space constructed via a planar algebra, like in \cite{Jones17-Thompson,ABC19,Jones21}, then the coefficient of above can be computed using the skein theory of the planar algebra giving us an explicit algorithm, see also \cite{Ren18-Thompson,GolanSapir15}.

\subsubsection{The group case}
Let $\cD=\Gr$ be the category of groups and consider a functor $\Phi:\cC\to\Gr.$
We often write $\Ga_t=X_t$ for the group associated to a morphism $t\in\cC(e,b).$
The inductive limit $\varinjlim_{t,\Phi}\Ga_t$ is usually denoted $\scrG_\Phi$ and has a group structure. 
Moreover, the Jones' action $\pi_\Phi:G_\cC\act \scrG_\Phi$ is an action by group automorphisms.
We equipped $\Gr$ with the monoidal structure $\ot$ such that $\Ga_1\ot\Ga_2$ is the direct sum of these groups. If $\sigma_i:\Ga_i\to\Lambda_i,i=1,2$ are group morphisms, then $\sigma_1\ot\sigma_2$ is the following group morphism $$\Ga_1\oplus\Ga_2\ni(g_1,g_2) \mapsto (\sigma_1(g_1),\sigma_2(g_2))\in \Lambda_1\oplus\Lambda_2.$$ 
Functors of this form were first considered by Stottmeister and the author in \cite{Brot-Stottmeister-M19,Brot-Stottmeister-P19}.
A systematic study of the semi-direct product of groups $\scrG_\Phi\rtimes G_{\cC}$ has been initiated in \cite{Brothier22,Brothier21}.

\subsubsection{Monoidal functors}\label{sec:monoidal}
We will mainly consider covariant monoidal functors from the category of forests $\cF$ into $\Hilb$ or $\Gr.$
On $\Hilb$ we consider in this article the classical monoidal structure $\ot$ so that $\ell^2(I)\ot \ell^2(J)\simeq \ell^2(I\times J)$.
Observe that an elementary forest $f_{i,n}$ decomposes as follows
$$I^{\ot i-1} \ot Y \ot I^{n-i}.$$
If $\Phi:\cF\to\cD$ is a monoidal functor, then 
$$\Phi(n)=\Phi(1)^{\ot n}$$ and 
$$\Phi(f_{i,n}) = \id^{\ot i-1}\ot \Phi(Y)\ot \id^{n-i}.$$
Since any forest is the composition of some $f_{i,n}$ we obtain that $\Phi$ is completely characterized by the objet $\Phi(1)$ and the morphism $\Phi(Y):\Phi(1)\to \Phi(1)\ot\Phi(1).$
When $\cD=\Hilb$ we may use the following notations: $\fH:=\Phi(1)$ and $R:=\Phi(Y).$
In that case $R:\fH\to \fH\ot\fH$ is a linear isometry.

If $\cD=\Gr$, then we may adopt the notations: $\Xi:\cF\to\Gr$ with $\Ga:=\Xi(1)$ and $S:=\Phi(Y)$. 
Hence, $S:\Ga\to\Ga\oplus\Ga$ is a group morphism.

Given a monoidal functor $\Phi:\cF\to\cD$ we have a Jones' action $\pi_\Phi:F\act \mathscr X.$
Assume that $\cD$ is a symmetric category like $\Hilb$ and $\Gr$.
We can then extend this action into an action of the larger Thompson's group $V$ via the formula
\begin{equation}\label{eq:JactionV}\dfrac{\theta\circ t}{\sigma\circ s} \cdot \dfrac{s}{x} := \dfrac{t}{\Tens(\theta^{-1} \sigma)x}, \text{ where } \Tens(\kappa)(x_1\ot\cdots\ot x_n) = x_{\kappa^{-1}(1)}\ot\cdots \ot x_{\kappa^{-1}(n)}.\end{equation}
When $\cD=\Hilb$, then the formula \eqref{eq:coefdef} becomes:
$$\langle \pi_\Phi\left(\dfrac{\theta\circ t}{\sigma\circ s} \right) \xi, \xi\rangle = \langle \Tens(\sigma)\Phi(s)\xi, \Tens(\theta)\Phi(t)\xi\rangle$$
for $\xi\in\Phi(1).$

Here is another interpretation of the extension of the Jones action to Thompson's group $V$.
We extend the monoidal functor $\Phi:\cF\to\cD$ uniquely into a monoidal functor $\ov\Phi:\cSF\to\cD$ satisfying $\ov\Phi(1)=\Phi(1), \ov\Phi(Y)=\Phi(Y)$ and where $\ov\Phi(\sigma)=\Tens(\sigma)$ for a permutation $\sigma.$
We then perform the Jones construction applied to $\ov\Phi.$ We have an inductive limit of spaces $\fH_{\sigma\circ t}$ where now Hilbert spaces are indexed by pairs $(\sigma,t)$ with $t$ a tree and $\sigma$ a permutation. 
Observe that $\fH_{\sigma\circ t}$ embeds inside $\fH_t$ via $\ov\Phi(\sigma^{-1})$ and thus the limit Hilbert space for the functor $\ov\Phi$ can be canonically identified with the one of $\Phi$ since any morphism of $\cSF$ with source $1$ (a symmetric tree) is smaller than a morphism of $\cF$ with source $1$ (a tree), i.e.~the set of trees is cofinal inside the directed set of symmetric trees.
The Jones action for $\ov\Phi$ of the larger group of fractions $G_{\cSF}$ satisfies that
$$\pi_{\ov\Phi}\left( \dfrac{\theta\circ t}{\sigma\circ s} \right) \dfrac{s}{x} = \dfrac{\sigma^{-1}\theta t}{x} = \dfrac{t}{\ov\Phi(\theta^{-1}\sigma)x} = \dfrac{t}{\Tens(\theta^{-1}\sigma)x}$$
as in \eqref{eq:JactionV}.

\subsection{Construction of larger groups of fractions}\label{sec:largergroups}
This section explains how to achieve step 2 described in the introduction: given a functor $\Xi:\cF\to\Gr$ we construct a category $\cC_\Xi$ whose group of fractions is isomorphic to the semi-direct product $\scrG\rtimes V$ where $V\act \scrG$ is the Jones action induced by $\Xi$.

\subsubsection{Larger groups of fractions}\label{sec:largercat}

{\bf A functor gives an action.}
Consider a group $\Ga$, a group morphism $S:\Ga\to\Ga\oplus\Ga$, and the unique monoidal functor $\Xi:\cF\to\Gr$ satisfying that $\Xi(1)=\Ga$ and $\Xi(Y)=S.$
Set $\scrG:= \lim_{t\in\fT, \Xi} \Ga_t$ the inductive limit group with respect to (w.r.t.) this functor where 
$$\Ga_t := \{ (g,t) ,\ g\in \Xi(\target(t))\}$$ 
is isomorphic to $\Ga^n$ when $t$ is a tree with $n$ leaves.
Intuitively, $\Ga_t$ can be interpreted as all possible decorations of the leaves of $t$ with elements of $\Ga.$
We have a Jones' action $\pi_\Xi:F\act \scrG$ that we extend to an action $\pi_\Xi:V\act \scrG$ as explained above.
Since $\pi_\Xi$ is an action by group automorphisms we can construct the semi-direct product $\scrG\rtimes_{\pi_\Xi} V.$

{\bf Group of fractions.}
We now show that $\scrG\rtimes_{\pi_\Xi} V$ arises naturally as a group of fractions.
Define the category $\cC:=\cC_\Xi$ with object $\N^*$ and sets of morphisms 
$$\cC(n,m):= \cF(n,m)\times S_m\times \Ga^m.$$
We interpret $\cF(n,m)$ (resp. $S_m$ and $\Ga^m$) as morphisms in $\cC(n,m)$ (resp. in $\cC(m,m)$), i.e.~a triple $(f,\sigma,g)\in \cC(n,m)$ is interpreted as a composition $g\circ \sigma\circ f.$
A morphism is identified with an isotopy class of diagrams that are vertical concatenation of forests, permutations, and a tuple of elements of $\Ga.$ 

{\bf Composition of morphisms.}
We previously explained what are the diagrams for forests and permutations and how to compose permutations with forests.
We now explain how to compose tuples of elements of $\Ga$ with forests and permutations.

An element $g=(g_1,\cdots,g_m)\in \Ga^m$ is the diagram consisting of placing $n$ dots on a horizontal line labeled from left to right by $g_1,g_2,\cdots,g_m.$
If $f\in\cF(n,m)$, then the diagram $g\circ f$ is represented by the forest $f$ whose $j$-th leaf is labeled by $g_j$.
\newcommand{\forestprime}{
\begin{tikzpicture}[baseline = .4cm]
\draw (0,0)--(0,1);
\draw (1,0)--(1,2/3);
\draw (1,2/3)--(2/3,1);
\draw (1,2/3)--(4/3,1);
\end{tikzpicture}
}
\newcommand{\gforest}{
\begin{tikzpicture}[baseline = .4cm]
\draw (0,0)--(0,1);
\draw (1,0)--(1,2/3);
\draw (1,2/3)--(2/3,1);
\draw (1,2/3)--(4/3,1);
\node at (0,1.2) {$g_1$};
\node at (2/3,1.2) {$g_2$};
\node at (4/3,1.2) {$g_3$};
\end{tikzpicture}
}
If $f= \ \forestprime$ and $g=(g_1,g_2,g_3)$, then 
$$g\circ f = \ \gforest \ .$$
If $p\in\cF(m,k)$ is another forest, then the diagram $p\circ g$ is represented by the forest $p$ whose $j$-th root is labeled by $g_j$.
\newcommand{\forestg}{
\begin{tikzpicture}[baseline = .4cm]
\draw (0,0)--(0,1);
\draw (1,0)--(1,2/3);
\draw (1,2/3)--(2/3,1);
\draw (1,2/3)--(4/3,1);
\draw (2,0)--(2,1/3);
\draw (2,1/3)--(5/3,1);
\draw (2,1/3)--(7/3,1);
\draw (13/6,2/3)--(2,1);
\node at (0,-.2) {$g_1$};
\node at (1,-.2) {$g_2$};
\node at (2,-.2) {$g_3$};
\end{tikzpicture}
}
For example, if $p=\forest$, then
$$ p\circ g = \ \forestg \ .$$
Now, we can lift up the $g_i$'s on top of the forest $p$ by applying the functor $\Xi$.
We obtain that
$$p\circ g = \Xi(p)(g)\circ p.$$
The element $\Xi(p)(g)$ is an element of $\Ga^6$ which decorates the six leaves of the forest $p$.
This process shows that a forest (here $p$) with roots decorated by elements of $\Ga$ is equal to the same forest with now its leaves decorated by elements of $\Ga$.

Formally, the rules of compositions are:
\begin{align*}
f\circ g & := \Xi(f)(g) \circ f, \ \forall f\in\cF(n,m), g\in \Ga^n\\
\sigma\circ (g_1,\cdots,g_n) & = (g_{\sigma^{-1}(1)},\cdots,g_{\sigma^{-1}(n)}) \circ \sigma, \ \forall g_i\in \Ga, \sigma\in S_n\\
\end{align*}

This indeed defines associative compositions for morphisms and provides a categorical structure to $\cC.$ 
Define a monoidal structure $\ot$ on $\cC$ such as $n\ot m:= n+m$ for objects and the tensor product of morphisms corresponds to horizontal concatenation from left to right as in $\cSF.$
The following proposition follows from the definitions of calculus of left-fractions.

\begin{proposition}\label{prop:groupCat}
The category $\cC$ admits a calculus of left-fractions. Its group of fractions $G_\cC$ associated to the object $1$ is isomorphic to the semi-direct product $\scrG\rtimes_{\pi_\Xi} V$ constructed via the functor $\Xi:\cF\to\Gr.$
\end{proposition}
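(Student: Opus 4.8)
The plan is to exploit the evident forgetful functor $\Pi:\cC\to\cSF$ that discards the $\Ga$-labels, sending $(f,\sigma,g)\in\cF(n,m)\times S_m\times\Ga^m$ to $(f,\sigma)\in\cSF(n,m)$. The composition rules $f\circ g=\Xi(f)(g)\circ f$ and $\sigma\circ g=(g_{\sigma^{-1}(1)},\cdots,g_{\sigma^{-1}(n)})\circ\sigma$ only alter the $\Ga$-component, so $\Pi$ respects composition and is a genuine functor which is surjective on morphisms. I would first record the normal form these rules produce: pushing every label to the top, any morphism of $\cC$ with source $1$ is a decorated symmetric tree $g\circ\sigma\circ t$ with $t$ a tree, $\sigma$ a permutation of its leaves and $g\in\Ga^{N}$, where $N$ is the number of leaves of $t$. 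Since $\cSF$ is cancellative and admits a calculus of left fractions (its group of fractions being $V$), the whole argument reduces to controlling the $\Ga$-labels on top of the corresponding statements in $\cSF$.

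For the calculus of left fractions at $1$ I would first check that $\cC$ is cancellative. If $pf=qf$ (resp.\ $up=uq$), applying $\Pi$ and cancellativity of $\cSF$ gives equality of the forest and permutation parts, so the two morphisms differ only in their $\Ga$-labels; comparing top labels, the contribution of the common factor $f$ (resp.\ $u$) is a common right (resp.\ left) factor in the group $\Ga^{N}$ and cancels — in the left case after invoking injectivity of the maps $\Xi(\cdot)$ — forcing the labels to agree. In particular $\cC$ is weakly left-cancellative at $1$. For Ore's condition, given $p,q$ with source $1$ I would apply Ore in $\cSF$ to $\Pi(p),\Pi(q)$ to obtain symmetric forests $\bar h,\bar k$ with $\bar h\,\Pi(p)=\bar k\,\Pi(q)$, lift them to $h=a\circ\bar h$, $k=b\circ\bar k$ with labels $a,b\in\Ga^{N}$ still free, and note that $hp$ and $kq$ then share the forest and permutation part $\bar h\,\Pi(p)$, while their labels — obtained by pushing those of $p,q$ through $\bar h,\bar k$ via $\Xi$ and left-multiplying by $a,b$ — can be equalized by a unique choice of $a$ (taking $b$ trivial). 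Hence $\cC$ admits a calculus of left fractions in $1$.

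For the isomorphism I would realise $G_\cC$ as a split extension of $V$ by $\scrG$. The functor $\Pi$ induces a surjective homomorphism $\rho:G_\cC\to G_\cSF=V$, $\frac{P}{Q}\mapsto\frac{\Pi(P)}{\Pi(Q)}$, and the inclusion $\cSF\hookrightarrow\cC$ with trivial labels induces a section $\iota_V:V\to G_\cC$ of $\rho$. Next I would define $\iota_\scrG:\scrG\to G_\cC$ by $\frac{t}{g}\mapsto\frac{g\circ t}{t}$; this is well defined because $\frac{g\circ t}{t}=\frac{\Xi(f)(g)\circ ft}{ft}$ reproduces exactly the relation $(t,g)\sim(ft,\Xi(f)(g))$ defining $\scrG$, and it is a homomorphism by the fraction product formula. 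It is injective: if $\frac{g\circ t}{t}=1$ then cancellativity of $\cC$ together with injectivity of the maps $\Xi(f)$ forces $g$ to be trivial. One then checks $\ker\rho=\iota_\scrG(\scrG)$: an element of $\ker\rho$ is a fraction $\frac{P}{Q}$ with $\Pi(P)=\Pi(Q)=w$, and the telescoping identity $\frac{P}{Q}=\frac{g_P\circ w}{w}\cdot\frac{w}{g_Q\circ w}$ exhibits it inside the subgroup $\iota_\scrG(\scrG)$ (each factor being an $\iota_\scrG$-image once the permutation in $w$ is absorbed by conjugation), while the reverse inclusion is clear. Thus $1\to\scrG\to G_\cC\xrightarrow{\rho}V\to1$ is a split short exact sequence, so $G_\cC\cong\scrG\rtimes V$.

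It remains to identify the conjugation action of $\iota_V(V)$ on $\iota_\scrG(\scrG)$ with the Jones action $\pi_\Xi$, and this is the one computational point I expect to be the real content. I would verify, for $v=\frac{\theta\circ t}{\sigma\circ s}\in V$ and $\gamma\in\scrG$, that $\iota_V(v)\,\iota_\scrG(\gamma)\,\iota_V(v)^{-1}=\iota_\scrG(\pi_\Xi(v)\gamma)$ by computing the left-hand side with the fraction arithmetic of $G_\cC$ and tracking the labels through the rules $f\circ g=\Xi(f)(g)\circ f$ and $\sigma\circ g=(g_{\sigma^{-1}(1)},\cdots)\circ\sigma$; the outcome matches exactly the defining formula \eqref{eq:JactionV} for $\pi_\Xi$, which is unsurprising since the Jones action is precisely the relabeling encoded by these composition rules (compare Remark \ref{rem:LargeCat}). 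Granting this identification, the split extension is the asserted semi-direct product and $G_\cC\cong\scrG\rtimes_{\pi_\Xi}V$.
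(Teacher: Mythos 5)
Your proof is correct and it constructs the very same isomorphism as the paper, but by a genuinely different route that fills in verifications the paper leaves implicit. The paper declares the two fraction-calculus axioms ``trivially satisfied'' and then works in the opposite direction: for each tree $t$ it defines $P_t:(\frac{t}{\sigma s},\frac{t}{g_t})\mapsto\frac{g_t t}{\sigma s}$ on pairs written over a common tree, checks compatibility with the directed systems, and asserts that the limit map is a group isomorphism; neither the homomorphism property nor bijectivity is verified there. You instead build the split short exact sequence $1\to\scrG\to G_\cC\xrightarrow{\rho}V\to 1$ out of the forgetful functor $\Pi$, which forces you to actually prove cancellativity and Ore's condition for $\cC$ (your lifting argument, with injectivity of the morphisms $\Xi(f)$ used for left cancellation, is exactly why the paper's ``trivially'' is true), to identify $\ker\rho$ with $\scrG$ by the telescoping/absorption argument (which is correct: $\frac{g\sigma t}{\sigma t}=\frac{\Tens(\sigma^{-1})(g)\,t}{t}$ is precisely the generating relation read backwards), and to reduce everything to a single conjugation identity. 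Note that the two constructions agree on the nose, since $\iota_\scrG(g)\iota_V(v)=\frac{g_t t}{t}\cdot\frac{t}{\sigma s}=\frac{g_t t}{\sigma s}=P_t(v,g)$; what differs is the bookkeeping, and yours makes bijectivity and the semidirect-product structure automatic rather than asserted.

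Two caveats. First, you defer the one computation you yourself identify as the real content, namely $\iota_V(v)\iota_\scrG(\gamma)\iota_V(v)^{-1}=\iota_\scrG(\pi_\Xi(v)\gamma)$. It does go through, and since the paper never performs it either, it is worth recording: with $v=\frac{t}{\sigma\circ s}$ and $\gamma=\frac{t}{g_t}$, the category relations $\sigma\circ g=\Tens(\sigma)(g)\circ\sigma$ and $f\circ g=\Xi(f)(g)\circ f$ give $\iota_V(v)^{-1}\iota_\scrG(\frac{t}{g_t})\iota_V(v)=\iota_\scrG\bigl(\frac{s}{\Tens(\sigma^{-1})g_t}\bigr)$, which is exactly $\iota_\scrG(\pi_\Xi(v^{-1})\gamma)$ by formula \eqref{eq:JactionV}. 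Second, a convention point you share with the paper: whether $g\mapsto\frac{g\circ t}{t}$ is a homomorphism or an anti-homomorphism of $\scrG$ depends on how two label morphisms of $\cC$ compose, which the paper never specifies. If $g\circ g'$ is the pointwise product $(g_ig_i')_i$, then the fraction product gives $\frac{gt}{t}\cdot\frac{g't}{t}=\frac{(g'\circ g)t}{t}$, an anti-homomorphism for non-abelian $\Ga$; your assertion ``it is a homomorphism by the fraction product formula'' (and the paper's map $P_t$) then needs the harmless fix $g\mapsto\frac{g^{-1}\circ t}{t}$, or the opposite label-composition convention, and the same substitution repairs the conjugation identity. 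This is a cosmetic adjustment, not a flaw in your architecture.
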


\begin{proof}
The two axioms of calculus of left-fractions are trivially satisfied by $\cC.$
Let us build an isomorphism from $\scrG\rtimes_{\pi_\Xi} V$ to $G_\cC$.
Consider $v\in V$ and $g\in\scrG.$
There exists a large enough tree $t$ such that $v=\dfrac{t}{\sigma s}$ and $g\in \Ga_t$ where $s$ is another tree and $\sigma$ a permutation.
To emphasise that we consider the representative of $g$ inside $\Ga_t$ we write $g$ as a fraction $\dfrac{t}{g_t}.$
Define the family of maps: 
$$P_t:(\dfrac{t}{\sigma s} , \dfrac{t}{g_t})\mapsto \dfrac{g_t t }{\sigma s}.$$
Those maps are compatible with the directed systems associated to $V,\scrG,$ and $G_\cC.$
Indeed if $f$ is a (symmetric) forest, then $\dfrac{t}{\sigma s} = \dfrac{ft}{f\sigma s}$ and $\dfrac{t}{g_t} = \dfrac{ft}{\Xi(f)(g_t)}.$
Our maps satisfy the following: 
$$P_{ft}(\dfrac{ft}{f\sigma s} , \dfrac{ft}{\Xi(f)(g_t)}) = \dfrac{ \Xi(f)(g_t)f t }{f\sigma s}=\dfrac{ fg_t t }{f\sigma s}=\dfrac{g_t t }{\sigma s} =P_t(\dfrac{t}{\sigma s} , \dfrac{t}{g_t}).$$
The limit map $\varinjlim_t P_t$ defines a group isomorphism from $\scrG\rtimes_{\pi_\Xi} V$ onto $G_\cC.$
\end{proof}

{\bf Fractions.}
Every element of $V$ can be written as a fraction $\dfrac{\sigma t}{s}$ where $t,s$ are trees with the same number of leaves and $\sigma$ is a permutation.
Similarly, using composition of morphisms inside the category $\cC_\Xi$, we observe that any element of $G_\cC$ can be written as a fraction $\dfrac{\sigma g t }{s}=\dfrac{gt}{\sigma^{-1} s}$ like in $V$ but where we labeled the leaves of $t$ with elements of the group $\Ga.$ 

\begin{remark}\label{rem:LargeCat}
We have explained how to construct a category $\cC_\Xi$ from a functor $\Xi:\cF\to\Gr$ starting from the category of forests such that the group of fractions of $\cC_\Xi$ is isomorphic to the semi-direct product obtained from the Jones action induced by $\Xi$.
This process is very general and we can replace the category $\cF$ by any other small category $\cD$ admitting a calculus of left-fractions at a certain object $e\in\ob(\cD)$.
Indeed, consider a functor $\Xi:\cD\to\Gr$ and the associated Jones' action $\al_\Xi:G_\cD\act \scrG_\Xi$ where $G_\cD$ is the group of fractions of $(\cD,e)$.
Define a new category $\cC_\Xi$ with object $\ob(\cC_\Xi)=\ob(\cD)$ and morphisms $\cC_\Xi(a,b) = \cD(a,b)\times \Xi(b)$ for $a,b$ objects. 
As before we identify $\cD(a,b)$ and $\Xi(b)$ as morphisms of $\cC_\Xi$ from $a$ to $b$ and from $b$ to $b$ respectively.
The composition of morphisms of $\cC_\Xi$ are defined such that 
$$f\circ g = \Xi(f)(g)\circ f, \text{ for } f\in\cD(a,b), g\in\Xi(a), a,b\in\ob(\cC_\Xi).$$
One can check that $\cC_\Xi$ is a small category admitting a calculus of left-fractions at $e$ whose associated group $G_{\cC_\Xi}$ is isomorphic to the semi-direct product $\scrG_\Xi\rtimes G_\cD$.

In particular, we can choose to replace permutations by braids and obtaining braided versions of our groups. This produces wreath product where the braided Thompson group is acting rather than $V$.
\end{remark}

\begin{notation}
We often write $v$ for an element of $V$, $g$ for an element of $\Ga$ or $\Ga^n$ and $v_g$ for an element of $G_\cC.$
\end{notation}

{\bf Extending Jones' actions to larger categories.}
We explain how to extend a Jones' action to a larger category.
Assume we have a monoidal functor $\Phi:\cF\to\cD$ into a symmetric category.
This defines a Jones' action $\pi:F\act \mathscr X$ that can be extended to an action of $V$ as we saw in Section \ref{sec:monoidal}. 
Let us explain how this same process allow us to extend $\pi$ to an action of the even larger group $G_\cC$ where $\cC=\cC_\Xi.$
Write $X:=\Phi(1)$ and assume we have an action by automorphisms $\rho:\Ga\act X$. 
We extend $\pi$ to the group of fractions $G_\cC$ such as:
\begin{equation}\label{eq:largecat}\pi\left( \dfrac{g\sigma t}{s} \right) \dfrac{s}{x} = \dfrac{t}{\Tens(\sigma^{-1})\rho^{\ot n}(g^{-1})x}\end{equation}
for $t,s$ trees with $n$ leaves, $\sigma\in S_n$ and $g\in \Ga^n$.

Formula \ref{eq:largecat} can be obtained as follows.
Extend the functor $\Phi$ into a functor $\ov\Phi:\cC\to\cD$ such that $\ov\Phi(1)=\Phi(1), \ov\Phi(Y)=\Phi(Y)$ and $\ov\Phi(\sigma)=\Tens(\sigma), \ov\Phi(g)=\rho(g), \sigma\in S_n, g\in \Ga.$
We observe that for any morphism $g\sigma t$ of $\cC$ with source $1$ we have that $g\sigma t\leq t$ and thus we can identify the inductive limit $\mathscr X$ obtained with $\Phi$ with the inductive limit obtained with $\ov\Phi.$
Therefore, $$\pi\left( \dfrac{g\sigma t}{s} \right) \dfrac{s}{x} = \dfrac{g\sigma t}{x} = \dfrac{(g\sigma)^{-1} g\sigma t}{\ov\Phi((g\sigma)^{-1})x} = \dfrac{t}{\Tens(\sigma^{-1}) \rho^{\ot n}(g^{-1}) x}$$
which recovers Formula \ref{eq:largecat}.

\subsubsection{Isomorphism with a wreath product}\label{sec:WP}
We end this subsection by giving a precise description of $G_\cC$ for a specific choice of functor.
Let $V\act \Q_2$ be the restriction of the usual action of $V$ on the unit interval to the dyadic rationals $\Q_2$, see Section \ref{sec:VactQ} for details.
Let $\Ga$ be a group and $\theta\in\Aut(\Ga)$ an automorphism of $\Ga.$
Given $v\in V$ and $x\in \Q_2$ we write $v'(x)$ for the right-derivative of $v$ at $x$.
Moreover, we denote by $\log_2$ the logarithm in base $2$ so that $\log_2(2^n)=n$ for all $n\in\Z.$
Consider the direct sum $\oplus_{\Q_2}\Ga$ of all maps $a:\Q_2\to\Ga$ that are finitely supported and define the actions $$V\act \oplus_{\Q_2}\Ga, (v\cdot a)(x):= \theta^{\log_2(v'(v^{-1}x))}(a(v^{-1}x)),\ v\in V, a\in \oplus_{\Q_2}\Ga, x\in \Q_2.$$
We write $$\Ga\wr_{\Q_2}^\theta V:=\oplus_{\Q_2}\Ga\rtimes^\theta V$$
for the associated semi-direct product that we call a {\it twisted wreath product}.
When $\theta=\id$ is the identity we drop the superscript $\theta$ and say that we have a wreath product or an {\it untwisted} wreath product.
Here is a key observation that was done in \cite[Section 4.2]{Brothier22}.
\begin{proposition}\label{prop:Bernoulli}
Fix a group $\Ga$ and an automorphism $\theta\in\Aut(\Ga)$.
Consider the unique covariant monoidal functor $\Xi:\cF\to\Gr$ satisfying
$$\Xi(1)=\Ga \text{ and } \Xi(Y)(g)=(\theta(g),e) \text{ for all } g\in \Ga.$$
Denote by $\scrG:=\varinjlim_{t\in \fT, \Xi}\Ga_t$ the limit group obtained and by $\pi_\Xi:V\act \scrG$ the Jones action.
There is a group isomorphism from $\scrG$ onto $\oplus_{\Q_2} \Ga$ that intertwines the Jones action $\pi_\Xi:V\act \scrG$ and the twisted action $V\act \oplus_{\Q_2} \Ga$ described above.
In particular, the group of fractions $G_\cC$ associated to the larger category $\cC:=\cC_\Xi$ is isomorphic to the twisted wreath product $\Ga\wr_{\Q_2}^\theta V$.
\end{proposition}

Note that it is easy to understand graphically the composition of morphisms in the category $\cC_\Xi$ associated to the specific functor $\Xi$ of Proposition \ref{prop:Bernoulli}.
Indeed, $Y \circ g = (\theta(g),e) \circ Y$ for any $g\in \Ga.$ 
Hence, elements of $\Ga$ can go up in a tree by going to the left and by adding some trivial elements $e$ to their right.
\newcommand{\Xigforest}{
\begin{tikzpicture}[baseline = .4cm]
\draw (0,0)--(0,1);
\draw (1,0)--(1,2/3);
\draw (1,2/3)--(2/3,1);
\draw (1,2/3)--(4/3,1);
\draw (8/3,0)--(8/3,1/3);
\draw (8/3,1/3)--(6/3,1);
\draw (8/3,1/3)--(10/3,1);
\draw (9/3,2/3)--(8/3,1);
\node at (0,1.2) {$g_1$};
\node at (2/3,1.2) {$\theta(g_2)$};
\node at (4/3,1.2) {$e$};
\node at (6/3,1.2) {$\theta(g_3)$};
\node at (8/3,1.2) {$e$};
\node at (10/3,1.2) {$e$};
\end{tikzpicture}
}
For example, if $g=(g_1,g_2,g_3)$ and $$f = \forest \ ,$$ then 
$$f\circ g = \ \forestg \ = \Xi(f)(g)\circ f = \Xigforest \ .$$

\section{Haagerup property for Thompson's groups $F$ and $T$}\label{sec:FT}
In this article we prove that certain wreath products have the Haagerup property.
This result is new and is done by using the original definition of the Haagerup property: there exists a net of positive definite maps vanishing at infinity that converges pointwise to 1.
The construction of the net is done using Jones' technology and by identifying wreath products with certain groups of fractions.
We could give a single proof. However, for pedagogical reasons we will give five of them with increasing level of technicality. 
More precisely, we provides proofs for the following results:
\begin{enumerate}
\item Thompson's group $F$ has the Haagerup property;
\item Thompson's group $T$ has the Haagerup property;
\item Thompson's group $V$ has the Haagerup property;
\item If $\Ga$ has the Haagerup property, then so does the wreath product $\Ga\wr_{\Q_2}V$;
\item If $\Ga$ has the Haagerup property and $\theta\in\Aut(\Ga)$ is any automorphism of $\Ga$, then the associated {\it twisted} wreath product $\Ga\wr_{\Q_2}^\theta V$ has the Haagerup property.
\end{enumerate}
The important gaps of difficulties between these cases are from $T$ to $V$ and from $V$ to the untwisted wreath product.

\subsection{Proof for Thompson's group $F$}
Consider the Hilbert space $\fH:=\ell^2(\N)$ where $\N$ is the additive monoid of natural numbers (including zero).
We write $(\de_n:\ n\geq 0)$ for the usual orthonormal basis of $\fH$.
We identify $\fH^{\ot k}$ with $\ell^2(\N^k)$ and consider the usual orthonormal basis $(\de_x:\ x\in \N^k)$ of it for all $k\geq 1.$
Fix a real number $0\leq \al\leq 1$ and set $\beta:=\sqrt{1-\al^2}.$
We now define a linear isometry:
\begin{align*}
R_\al:&\fH\to \fH\ot \fH\\
&\de_0 \mapsto \al \de_{0,0} + \beta \de_{1,1}\\
& \de_n\mapsto \de_{n,n} \text{ for all } n\geq 1.
\end{align*}
This defines uniquely a monoidal covariant functor $\Phi_\al:\cF\to\Hilb$ and thus a Jones' representation $\pi_\al:F\act \scrH_\al$.
Now, $\fH$ embeds in $\scrH_\al$ and we may then consider $\de_0$ as a unit vector of $\scrH_\al$.
We set 
$$\phi_\al:F\to\C, \ g\mapsto \langle \pi_\al(g)\de_0,\de_0\rangle$$
our matrix coefficient which is a positive definite map.

{\bf Key fact.} Consider a tree $t$ with $n$ leaves and the list $d^t:=(d^t_1,\cdots,d^t_n)$ of distances between the root of $t$ and each of its leaf.
The map $t\mapsto d^t$ is injective. 
With this fact we will be able to easily prove the Haagerup property for $F$.

By the key fact we have that when $\al=0$, then the cyclic component of $\pi_0$ associated to the vector $\de_0$ is unitary equivalent to the left-regular representation $\la_F:F\act \ell^2(F)$.
When $\al=1$, then the cyclic component of $\de_0$ becomes unitary equivalent to the trivial representation $1_F$.
Hence, we have constructed a continuous path of representations between the trivial and the left-regular ones.

In particular, for all $g\in F$ we have that $\lim_{\al\to 1} \phi_\al(g)=1$.
To conclude that $F$ has the Haagerup property it is then sufficient to prove that for all $0<\al<1$ we have that $\phi_\al$ vanishes at infinity.
We explain briefly why this is the case.

Consider $g=\dfrac{t}{s}$ in $F$ where $t,s$ are trees with same number of leaves say $n$.
Observe that 
$$\phi_\al(g)= \langle \pi_\al(\frac{t}{s})\de_0,\de_0\rangle = \langle \Phi_\al(s)\de_e,\Phi_\al(t)\de_e\rangle.$$
The vector $\Phi_\al(s)\de_e$ belongs to $\fH^{\ot n}$ and can easily be decomposed over the usual orthonormal basis.
Indeed, for each rooted subtree $x$ of $s$ we realise the decomposition $s=f_x\circ x$ where $f_x$ is a uniquely defined forest.
The forest $f_x$ has $n$ leaves. We write $d_j^{x,s}$ for the distance from this $j$-th leaf of $f$ to the root of $f_x$ that is in the same connected component.
We obtain that 
$$\Phi_\al(s)\de_e=\sum_x c_x \de_{d^{x,s}}$$
where $d^{x,s}$ is the multi-index $(d^{x,s}_1,\cdots,d^{x,s}_n)$ and $c_x$ a certain coefficient equal to a product of $\al$ and $\beta$.
Similarly, $\Phi_\al(t)\de_e$ admits such a decomposition into $\sum_y c_y \de_{d^{y,t}}.$
Therefore, 
$$\phi_\al(g)=\sum_{x,y} c_x c_y \langle \de_{d^{x,s}},\de_{d^{y,t}}\rangle.$$
Observe that $\langle \de_{d^{x,s}},\de_{d^{y,t}}\rangle=1$ when $d^{x,s}=d^{y,t}$ meaning that the forests $f^x$ and $f^y$ are equal by the key fact of above.

We deduce the following second key fact:
if $\dfrac{t}{s}$ is an irreducible fraction we have that all the coefficients of above are equal to zero except one: the coefficient corresponding to the subtrees $x=s$ and $y=t$ implying that $f^x=f^y=I^{\ot n}$ are trivial.
Indeed, if there would be another nonzero coefficient then there would exists proper subtrees $x\leq s, y\leq t$ so that $f^x=f^y\neq I^{\ot n}$.
This implies that $\dfrac{t}{s}$ can be reduced into $\dfrac{y}{x}$ and thus contradicting our assumption of irreducibility.
We deduce that 
$$\phi_\al(g)=\al^{2n-2}$$
for $g$ equal to an irreducible fraction made of trees with $n$ leaves.
Since there are only finitely many of those for each fixed $n$ we deduce that $\phi_\al$ vanishes at infinity for all $0\leq \al<1$ and thus $F$ has the Haagerup property.

Note that $\pi_\al$ extends canonically into a representation of $V$. 
However, $\phi_\al$ is no longer vanishing at infinity when extended to $V$ nor on the intermediated subgroup $T$.
Indeed, if $g_n=\dfrac{t_n\circ \sigma}{t_n}$ where $t_n$ is the regular tree with $2^n$ leaves all at distance $n$ from the root and $\sigma$ is a $n$-cycle, then $\phi_\al(g_n)=1$ for all $n$ and $\alpha$.

\subsection{Proof for Thompson's group $T$}

We proceed similarly than in the $F$-case. 
Instead of considering $\N$ we consider the free monoid $M=\N*\N$ in two generators $a,b$. 
We write $e$ for the trivial element of $M$.
As above we write $\fH=\ell^2(M)$ for the associated Hilbert space and $(\de_x:\ x\in M)$ for the usual orthonormal basis.
Fix $0\leq \al\leq 1$, set $\beta:=\sqrt{1-\al^2}$, and define the linear isometry:
\begin{align*}
R_\al:&\fH\to \fH\ot \fH\\
&\de_e \mapsto \al \de_{e,e} + \beta \de_{a,b}\\
& \de_x\mapsto \de_{xa,xb} \text{ for all } x\in M, x\neq e.
\end{align*}
This provides a functor $\Phi_\al$, a Jones representation $\pi_\al:T\act \scrH_\al$, and a matrix coefficient:
$$\phi_\al:T\to \C, g\mapsto \langle \pi_\al(g)\de_e,\de_e\rangle.$$

We have that the cyclic subrepresentation of $\pi_\al$ associated to the vector $\de_e$ interpolates the trivial and the left-regular representations of $T$.
To obtain the Haagerup property for $T$ it is then sufficient to show that $\phi_\al$ vanishes at infinity for all $0<\al<1.$

{\bf Key fact:} Consider a tree $t$ with $n$ leaves and $\sigma$ a cyclic permutation of $\{1,\cdots,n\}$.
We write $w^t_i$ for the (unique geodesic) path from the root of $t$ to its $i$-th leaf. We identify $w^t_i$ with a word $x_1\cdots x_k$ in the letters $a,b$ where $k$ is the length of the path and $x_j=a$ when the $j$-th edge of the path is a left-edge and $x_j=b$ otherwise.
The map $(t,\sigma)\mapsto (w^t_{\sigma(1)},\cdots,w^t_{\sigma(n)})$ is injective.

Using the key fact we can proceed similarly than above and conclude that if $g=\dfrac{t\circ \sigma}{s}\in T$ is a reduced fraction with $t,s$ trees with $n$ leaves, and $\sigma$ a cyclic permutation, then 
$$\phi_\al(g)=\al^{2n-2}.$$
This proves that $T$ has the Haagerup property.

Note that when we extend $\phi_\al$ to $V$ we no longer have a map vanishing at infinity, see \cite[Remark 1]{Brothier-Jones19}.

\section{Haagerup property for Thompson's group $V$}\label{sec:VH}

\subsection{The family of isometries, functors, representations, and matrix coefficients}\label{sec:coefV}
Consider the free monoid $M$ in the four generators $a,b,c,d$ and let $\fH:=\ell^2(M)$ be the associated Hilbert space with usual orthonormal basis $(\de_x:\ x\in M).$
Note that we use the free monoids in one, two, and four generators for constructing matrix coefficients for $F,T,$ and $V$, respectively. 

Identify $\fH^{\ot n}$ with $\ell^2(M^n)$ and thus the standard orthonormal basis of $\fH^{\ot n}$ consists in Dirac masses $\de_w$ where $w$ is a list of $n$ words in letters $a,b,c,d.$
For any real number $0\leq \al \leq 1$ we set $\beta:=\sqrt{1-\al^2}$ and define the isometry
\begin{align*}
R_\al:&\fH\to \fH\ot \fH\\
&\de_e \mapsto \al \de_{e,e} + \beta \de_{c,d}\\
& \de_x\mapsto \al\de_{xa,xb} + \beta\de_{xc,xd} \text{ for all } x\in M, x\neq e.
\end{align*}

Let $\Phi_\al:\cF\to \Hilb$ be the associated monoidal functor satisfying $\Phi_\al(1):=\fH, \Phi_\al(Y)=R_\al$ and let $\pi_\al:V\to \cU(\scrH_\al)$ be the associated Jones' representation.

Define the coefficient 
$$\phi_\al:V\to \C, \ v\mapsto \langle \pi_\al(v)\de_e,\de_e\rangle.$$
Observe that if $v=\dfrac{\sigma\circ t}{s}$, then 
\begin{equation}\label{eq:phivde}\phi_\al(v) = \langle \Phi_\al(s)\de_e, \Tens(\sigma)\Phi_\al(t)\de_e\rangle
\end{equation} where $$\Tens(\sigma) \left(\xi_1\ot\cdots\ot \xi_m \right):= \xi_{\sigma^{-1}(1)}\ot\cdots\ot \xi_{\sigma^{-1}(m)}.$$

\subsection{Interpolation between the trivial and the left-regular representations}

It is easy to see that the representations $\pi_0$ and $\pi_1$, that we restrict to the cyclic space generated by $\de_e$, are unitary equivalent to the left-regular representation $\la_V$ and to the trivial representation $1_V$, respectively.
In particular, $\lim_{\al\to 1} \phi_\al(v) = 1$ for any $v\in V$.
By definition, $\phi_\al$ is positive definite for any $\al$.
Therefore, it is sufficient to show that $\phi_\al$ vanishes at infinity for any $0<\al<1$ to prove that $V$ has the Haagerup property.

From now on we fix $0<\al<1$ and suppress the subscript $\al$ thus writing $R,\Phi,\pi,\phi$ for $R_\al,\Phi_\al,\pi_\al,\phi_\al.$

\subsection{The set of states}\label{sec:state}

Consider a tree $t\in\fT$ with $n$ leaves.
Put $\cV(t)$ the set of trivalent vertices of $t$ that is a set of order $n-1$ and let 
$$\St(t):=\{ \cV(t)\to \{0,1\}\}$$ 
be the set of maps from the trivalent vertices of $t$ to $\{0,1\}$ that we call the set of states of $t$.
Consider the maps
\begin{align*}
R(0):&\fH\to \fH\ot\fH\\ 
&\de_e \mapsto \al\de_{e,e}\\  
&\de_x \mapsto  \al\de_{xa,xb} \text{ if } x\in M, x\neq e
\end{align*}
and
\begin{align*}
R(1):&\fH\to \fH\ot\fH\\ 
&\de_e \mapsto \beta\de_{c,d}\\  
&\de_x \mapsto  \beta\de_{xc,xd} \text{ if } x\in M, x\neq e.
\end{align*}
By definition we have 
$$R=R(0)+R(1).$$

Given a state $\tau\in \St(t)$, we consider the operator $R(\tau):\fH\to \fH^{\ot n}$ defined as follows.
If $t$ decomposes as a product of elementary forests $ f_{j_{n-1},n-1}\circ f_{j_{n-2},n-2} \circ\cdots f_{j_2, 2}\circ f_{1,1}$ and if $\nu_k$ is the unique trivalent vertex of $f_{j_k,k}$, then 
$$R(\tau) = (\id^{\ot j_{n-1}-1} \ot R(\tau(\nu_{n-1})) \ot \id^{n-1-j_{n-1}} ) \circ \cdots \circ R(\tau(\nu_1)).$$

\newcommand{\fonefone}{
\begin{tikzpicture}[baseline=.8cm]
\draw (0,0)--(0,-.5);
\draw (0,0)--(-1,2);
\draw (-.5,1)--(0,2);
\draw (0,0)--(1,2);
\node at (-.8,1) {$\nu_2$};
\node at (-.3,0) {$\nu_1$};
\end{tikzpicture}
}
Here is an example:
consider the following tree with vertices $\nu_1,\nu_2:$
$$t=\fonefone.$$
If $\tau(\nu_1)=1,\tau(\nu_2)=0$, then $R(\tau) = (R(0)\ot\id)\circ R(1).$
Hence, $$R(\tau)\de_e = (R(0)\ot \id)\beta\de_{c,d}= \al\beta \de_{ca,cb,d}.$$

By definition of the functor $\Phi$ we obtain the formula
$$\Phi(t) = \sum_{\tau\in \St(t) }  R(\tau).$$
When applied to $\de_e$ we obtain:
$$\Phi(t)\de_e = \sum_{\tau\in \St(t)} \al_\tau \de_{W(t,\tau)}$$
where $\al_\tau$ is a constant depending on the state $\tau$ and $W(t,\tau)$ is a list of words of $M$ (one word per leaf).
For example, if $t$ is the tree of the figure of above, then we have four coefficients corresponding to the states taking the  values $(0,0), (0,1), (1,0),$ and $(1,1)$ at the pair of vertices $(\nu_1,\nu_2)$.
We obtain:
$$\Phi(t)\de_e =\al^2\de_{e,e,e} + \al\beta\de_{c,d,e} + \beta\al \de_{ca,cb,d} + \beta^2 \de_{cc,cd,d} .$$
If $t$ has $n$ leaves and $|\{ v\in \cV(t) : \tau(v)=0\}| = m $, then $\al_\tau = \al^{m} \beta^{n-m-1},$ the general formula being 
$$\al_\tau = \al^{ |\tau^{-1}(0)|} \beta^{ | \tau^{-1} (1) |}.$$
If $\sigma\in S_n$ is a permutation, then 
\begin{equation}\label{eq:Phivde}\Phi(\sigma\circ t)\de_e = \sum_{\tau\in \St(t)} \al_\tau \de_{\sigma W(t,\tau)},\end{equation}
where $\sigma W(t,\tau)$ is the list of words permuted by $\sigma.$

\subsection{General strategy for proving that $\phi$ vanishes at infinity}

Consider a fraction $v=\dfrac{\sigma\circ t}{t'}$. The decomposition of above provides the following:
\begin{equation}\label{eq:innerprod}
\phi(v) = \sum_{\tau\in\St(t)}\sum_{\tau'\in\St(t')} \al_\tau\al_{\tau'} \langle \de_{W(t',\tau')} , \de_{\sigma W(t,\tau)}\rangle.
\end{equation}
If $t$ has $n+1$ leaves, then the coefficient of above is a sum of $2^n\times 2^n$ inner products of vectors.
Our strategy is to prove that most of them are equal to zero when $\dfrac{\sigma t}{t'}$ is a reduced fraction, i.e.~$\sigma W(t,\tau)\neq W(t',\tau')$ for most pairs of states $(\tau,\tau')$.

Let us describe the $j$-th word $W(t,\tau)_j$ of $W(t,\tau)$.
Consider the $j$-th leaf $\ell$ of the tree $t$ and let $P_j$ be the geodesic path from the root of $t$ to this leaf. 
Denote by $\nu_1,\cdots,\nu_k$ the trivalent vertices of this path listed from bottom to top and let $e_1,\cdots, e_k$ be the edges such that the source of $e_i$ is $\nu_i$ and its target $\nu_{i+1}$ for $1\leq i\leq k-1$ while $e_k$ goes from $\nu_k$ to the leaf $\ell$.
We have 
\begin{equation}\label{eq:W}
W(t,\tau)_j = y(1) y(2)\cdots y(k) \text{ such that }
\end{equation} 
$$y(i) =  \begin{cases} e \text{ if } \tau(\nu_1)=\cdots = \tau(\nu_i)=0\\
a \text{ if $e_i$ is a left-edge and } \tau(\nu_i)=0\\
c \text{ if $e_i$ is a left-edge and } \tau(\nu_i)=1\\
b \text{ if $e_i$ is a right-edge and } \tau(\nu_i)=0\\
d \text{ if $e_i$ is a right-edge and } \tau(\nu_i)=1\\
\end{cases}$$
when in the second and fourth case we further assume that at least one of the $\tau(\nu_j)$ is equal to $1$ for $1\leq j< i$.
From this description we easily deduce the following lemma.
\begin{lemma}\label{lem:inj}
The map $\tau\in \St(t)\mapsto W(t,\tau)$ is injective.
\end{lemma}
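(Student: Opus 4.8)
The plan is to prove injectivity by constructing an explicit left inverse: I will show that the tree $t$ together with the word-list $W(t,\tau)$ determines the state $\tau$ completely. The guiding principle is that, by \eqref{eq:W}, reading the subscripts along a single leaf-word recovers the partial sums of $\tau$ along the corresponding root-to-leaf geodesic, and every trivalent vertex lies on such a geodesic.

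Concretely, I would fix a trivalent vertex $\nu$ and pick any leaf $j$ in the subtree rooted at $\nu$, so that the path $P_j$ with vertices $\nu_1,\dots,\nu_k$ passes through $\nu=\nu_m$. By \eqref{eq:W} the $i$-th letter of $W(t,\tau)_j$ carries the index $g(i)=\sum_{r=1}^i\tau(\nu_r)-1$, a non-decreasing sequence whose increments $g(i)-g(i-1)=\tau(\nu_i)\in\{0,1\}$ are exactly the state values. If every subscript were visible in the word, one could read off each $\tau(\nu_i)$ as a consecutive difference of subscripts and be finished; note that whether a letter is an $a$ or a $b$ plays no role here, being fixed by $t$ itself.

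The one genuine obstacle is the convention $a_{-1}=b_{-1}=e$: the letters with $g(i)=-1$ are empty and vanish from the word, so the leading run of zeros of $\tau$ along $P_j$ is invisible and cannot be recovered as a subscript difference. I would resolve this using the tree. The path length $k$ is determined by $t$ alone, while the number $\ell$ of (non-empty) letters of $W(t,\tau)_j$ is read off from the word. Since $g$ is non-decreasing, the vanishing letters form precisely an initial segment, so $g(i)=-1$ for $i\le k-\ell$ and $g(i)\ge 0$ for $i>k-\ell$. Equivalently, the first $k-\ell$ vertices of $P_j$ carry $\tau=0$; when $\ell\ge 1$ the first visible letter has subscript $0$, forcing $\tau(\nu_{k-\ell+1})=1$, and for $i>k-\ell+1$ the value $\tau(\nu_i)$ is the difference of the subscripts of the $i$-th and $(i-1)$-th visible letters; when $\ell=0$ the whole path carries $\tau=0$.

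Applying this to $\nu=\nu_m$ recovers $\tau(\nu)$, and since $\nu$ was arbitrary this reconstructs $\tau$ from $W(t,\tau)$ and $t$, giving injectivity. As an alternative I could argue by induction on the number of leaves, peeling off the topmost caret $f_{j_{n-1},n-1}$: the word-list of $t$ coincides with that of the smaller tree $t'$ away from positions $j_{n-1},j_{n-1}+1$, where the single word $W(t',\tau')_{j_{n-1}}=x$ is split into $xa_c,xb_c$ with the index $c$ determined by $\tau(\nu_{n-1})$ and the top index occurring in $x$; comparing $c$ with that top index recovers $\tau(\nu_{n-1})$, while the common prefix recovers $x$, and the inductive hypothesis handles $\tau'$.
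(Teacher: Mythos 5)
Your proof is correct and takes essentially the same approach as the paper: there the lemma is stated as an immediate consequence of the description \eqref{eq:W}, and your reconstruction of $\tau$ from consecutive differences of subscripts is precisely that derivation spelled out in full. In particular, your treatment of the invisible initial letters $a_{-1}=b_{-1}=e$ (the leading run where $\tau=0$) matches the observation the paper records immediately after the lemma about the word $W(t,\tau)_j$ only remembering the path beyond the $(r+1)$th vertex.
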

Observe that if $r:=\max(i : \tau(\nu_s) = 0 \text{ for all } s\leq i)$, then $W(t,\tau)_j = y(r+1) y(r+2)\cdots y(k)$ with $y(r+1)=c$ or $d$.
Further, Equation \ref{eq:W} shows that the word $W(t,\tau)_j$ remembers the part of the path after the $r+1$-th vertex. 
This motivates the following decomposition.
\begin{notation}\label{not:ztau}
If $\tau$ is a state of the tree $t$, then we define $z_\tau$ to be the largest rooted subtree of $t$ satisfying that $\tau(\nu)=0$ for all (trivalent) vertices $\nu$ of $z_\tau$ (hence excluding the leaves of $z_\tau$).
Denote by $f_\tau$ the unique forest satisfying that $t=f_\tau\circ z_\tau.$
\end{notation}
{\bf Key observation:} The list of words $W(t,\tau)$ remembers the forest $f_\tau$, i.e.~if $t$ is a fixed tree and $\tau,\tau'$ are two states on two different trees $t,t'$, then $W(t,\tau)=W(t',\tau')$ implies that $f_\tau=f_{\tau'}$.

\subsection{An equivalence relation on the set of vertices}

From now on we consider an element $v\in V$ that we decompose as a fraction $v=\dfrac{\sigma\circ t}{t'}$ where $t,t'$ are trees with $n$ leaves and $\sigma$ is a permutation that we interpret as a bijection from the leaves of $t$ to the leaves of $t'$. 
We define an equivalence relation on the set of trivalent vertices of the tree $t$ which depends on the triple $(t,t',\sigma)$.

\begin{definition}\label{def:sim}
Consider two trivalent vertices $\nu,\tilde\nu$ of $t$.
Assume that there exists a trivalent vertex $\nu'$ of $t'$ and two leaves $\ell,\tilde\ell$ of $t$ that are descendant of $\nu,\tilde\nu$, respectively, and satisfying:
\begin{enumerate}
\item the leaves $\sigma(\ell)$ and $\sigma(\tilde\ell)$ are descendant of $\nu'$;
\item $d(\nu,\ell)=d(\nu',\sigma(\ell))$ and $d(\tilde\nu, \tilde\ell)=d(\nu',\sigma(\tilde\ell))$ where $d$ is the usual distance on trees.
\end{enumerate}
In that case we say that $\nu$ is equivalent to $\tilde\nu$ and write $\nu\sim \tilde\nu$.
\end{definition}

It is easy to see that $\sim$ defines an equivalence relation.
The next proposition implies that there are very few pairs of states $(\tau,\tau')$ satisfying that $W(t',\tau')=\sigma W(t,\tau)$.

\begin{proposition}\label{prop:sim}
Consider the fraction $\dfrac{\sigma\circ t}{t'}$ and a state $\tau\in\St(t)$. 
Assume that there exists a state $\tau'\in\St(t')$ such that $\sigma W(t,\tau)=W(t',\tau')$.
The following assertions are true:
\begin{enumerate}
\item The state $\tau$ is constant on equivalence classes of vertices under the relation $\sim$, i.e.~$\tau(\nu)=\tau(\tilde\nu)$ if $\nu\sim\tilde\nu$;
\item If $\nu$ is a vertex of $f_\tau$ and the fraction $\dfrac{\sigma\circ t}{t'}$ is irreducible, then there exists $\tilde\nu\neq \nu$ in $f_\tau$ such that $\tilde\nu\sim \nu$;
\item There is at most one state $\tau'\in\St(t')$ satisfying $\sigma W(t,\tau)=W(t',\tau')$. In that case we have $\al_\tau=\al_{\tau'}$.
\end{enumerate}
\end{proposition}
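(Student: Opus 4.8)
The plan is to base all three assertions on one bookkeeping device: reading the value of $\tau$ at a vertex off the words produced by $\Phi$. Fix a leaf $\ell$ of $t$, let $k_\ell$ be the number of trivalent vertices on the geodesic $P_\ell$ from the root to $\ell$, and for $d\geq 1$ set $h_\ell(d):=g_{t,\tau,\ell}(k_\ell-d+1)$, the subscript carried by the edge at distance $d$ from $\ell$, with the convention $h_\ell(d)=-1$ for $d>k_\ell$. Two things are visible from \eqref{eq:W}. First, the word $W(t,\tau)_\ell$ together with its length recovers the whole function $d\mapsto h_\ell(d)$ and the left/right type of every edge: the non-negative values of $h_\ell$ are the subscripts read from the leaf end, and $h_\ell(d)=-1$ for all larger $d$. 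Second, if $\nu$ is the ancestor of $\ell$ at distance $d$ then $\tau(\nu)=h_\ell(d)-h_\ell(d+1)$, since consecutive partial sums of $\tau$ along $P_\ell$ differ exactly by the value of $\tau$ at the vertex in between. The hypothesis $\sigma W(t,\tau)=W(t',\tau')$ means $W(t,\tau)_\ell=W(t',\tau')_{\sigma(\ell)}$ \emph{as words}; in particular their lengths agree, so $h_\ell(d)=h'_{\sigma(\ell)}(d)$ for every $d$ (with $h'$ the corresponding function for $(t',\tau')$), and the left/right type of the edge at distance $d$ from $\ell$ agrees with that at distance $d$ from $\sigma(\ell)$.

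With this, (1) is immediate. Given $\nu\sim\tilde\nu$, pick witnesses $\nu',\ell,\tilde\ell$ as in Definition \ref{def:sim} and put $d=d(\nu,\ell)=d(\nu',\sigma(\ell))$. Since $\sigma(\ell)$ is a descendant of $\nu'$ lying at distance $d$, the vertex $\nu'$ is exactly the ancestor of $\sigma(\ell)$ at distance $d$, whence $\tau'(\nu')=h'_{\sigma(\ell)}(d)-h'_{\sigma(\ell)}(d+1)=h_\ell(d)-h_\ell(d+1)=\tau(\nu)$. Repeating along $\tilde\ell$ with $\tilde d=d(\tilde\nu,\tilde\ell)=d(\nu',\sigma(\tilde\ell))$ gives $\tau(\tilde\nu)=\tau'(\nu')$, so $\tau(\nu)=\tau(\tilde\nu)$. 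For (3), uniqueness of $\tau'$ is Lemma \ref{lem:inj}, since any two matching states give $W(t',\tau_1')=\sigma W(t,\tau)=W(t',\tau_2')$. For the weights, $\al_\tau=\al^{|\tau^{-1}(0)|}\sqrt{1-\al^2}^{\,|\tau^{-1}(1)|}$ and $|\tau^{-1}(0)|+|\tau^{-1}(1)|=n-1=|\tau'^{-1}(0)|+|\tau'^{-1}(1)|$, so it suffices to see $|\tau^{-1}(1)|=|\tau'^{-1}(1)|$. I would deduce this from the first observation above: the ordered list of words reconstructs the forest $f_\tau$ together with $\tau$ on its vertices (deleting subscripts gives the leaf addresses inside $f_\tau$, and the subscript jumps mark the vertices where $\tau=1$). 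As $W(t',\tau')$ is this same list up to the relabelling $\sigma$, it reconstructs the same decorated forest, so both states carry the same number of $1$'s, giving $\al_\tau=\al_{\tau'}$.

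Part (2) is the substantive one and is the main obstacle; here is how I would argue, now genuinely using the standing hypothesis. First, $\nu\in f_\tau$ means $h_\ell(d)\geq 0$ for a leaf $\ell$ under $\nu$ at distance $d$; letting $\nu'$ be the ancestor of $\sigma(\ell)$ at distance $d$, the identity $h'_{\sigma(\ell)}(d)=h_\ell(d)\geq 0$ shows $\nu'\in f_{\tau'}$. Then I would split on the subtree below $\nu'$. If some leaf $m'$ of $t'$ under $\nu'$ has $\tilde\ell:=\sigma^{-1}(m')$ \emph{not} under $\nu$, set $\tilde d=d(\nu',m')$ and let $\tilde\nu$ be the ancestor of $\tilde\ell$ at distance $\tilde d$; then $\nu\sim\tilde\nu$ through $(\nu',\ell,\tilde\ell)$, one has $\tilde\nu\neq\nu$ because $\tilde\ell$ is not under $\nu$, and $\tilde\nu\in f_\tau$ because $h_{\tilde\ell}(\tilde d)=h'_{m'}(\tilde d)$ equals the subscript at $\nu'$, which is $\geq 0$ as $\nu'\in f_{\tau'}$ — this is the partner sought. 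In the remaining case every leaf under $\nu'$ pulls back under $\nu$; I would pick a cherry $\mu'$ of the subtree below $\nu'$, with leaf children $m_1'$ (left) and $m_2'$ (right). Their words coincide except in the final letter ($a_j$ versus $b_j$), so their $\sigma$-preimages $\ell_1,\ell_2$ under $\nu$ have words differing only in the last letter, forcing $\ell_1,\ell_2$ to be the left and right leaves of a cherry $\mu$ of $t$. But then $\{\ell_1,\ell_2\}$ is carried by $\sigma$, orientation preserved, onto the cherry $\{m_1',m_2'\}$ of $t'$, contradicting that $\dfrac{\sigma\circ t}{t'}$ is reduced. Hence the second case cannot occur and a partner $\tilde\nu\in f_\tau$ always exists. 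The delicate points — and the reason this is the crux — are the bookkeeping that $\nu$ and $\nu'$ are truly corresponding vertices of $f_\tau$ and $f_{\tau'}$, and the verification that a matched pair of sibling leaves descends to an \emph{orientation-preserving} caret, which is exactly where reducedness is consumed.
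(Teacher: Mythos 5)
Your parts (1) and (2) are correct. Part (1) is essentially the paper's own argument (read $\tau(\nu)$ off the subscripts via the witness leaves). Part (2) is correct but takes a genuinely different route: the paper argues by contraposition --- if the $\sim$-class of $\nu$ were a singleton, the \emph{entire} subtree of $t$ under $\nu$ would be matched by $\sigma$, order-preservingly, onto an isomorphic subtree of $t'$ (this needs an induction on the number of leaves to see that $\sigma$ preserves their order), and the two subtrees could then be cancelled --- whereas you argue directly, splitting on whether $\sigma^{-1}$ sends the leaves under $\nu'$ into the leaves under $\nu$, and in the bad case you extract a single matched cherry and collapse it. Both proofs consume reducedness through the same mechanism (an orientation-preserving matched caret can be cancelled), but your version localizes the cancellation to one cherry and avoids the paper's induction; it is, if anything, cleaner.

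The gap is in part (3), in the sentence ``As $W(t',\tau')$ is this same list up to the relabelling $\sigma$, it reconstructs the same decorated forest.'' Your reconstruction uses the \emph{ordered} list: the grouping of consecutive addresses into the trees of $f_\tau$ is exactly what makes it unique. After permuting by $\sigma$ you only control the \emph{multiset} of decorated words, and the multiset does not determine the decorated forest. Concretely, the multiset $\{aa_1,\, ab_1,\, b,\, a,\, ba_1,\, bb_1\}$ is realized both by the forest (left comb with $3$ leaves) $\sqcup$ (right comb with $3$ leaves) with all four vertices in state $1$, and by (balanced tree with $4$ leaves) $\sqcup\, Y$ with all four vertices in state $1$; taking $z_\tau=z_{\tau'}=Y$ and grafting these forests onto it produces $t,t',\sigma,\tau,\tau'$ with $\sigma W(t,\tau)=W(t',\tau')$ and $f_\tau\neq f_{\tau'}$. (Note that part (3) does not assume the fraction reduced, so such examples are squarely within its hypotheses.)

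The conclusion you want, $|\tau^{-1}(1)|=|\tau'^{-1}(1)|$, is nevertheless true and can be repaired with the bookkeeping you already set up, because the number of $1$'s \emph{is} a multiset invariant even though the forest is not. Assign to each word $w$ the quantity $J(w)$ defined as follows: $J(w)=0$ if $w$ contains no $a$-letter; otherwise write $w=u\,a_j\,b_{j_1}\cdots b_{j_k}$ with $a_j$ the last $a$-letter, and set $J(w)=j-j'$, where $j'$ is the subscript of the last letter of $u$ (with $j'=-1$ when $u$ is empty). The map sending a trivalent vertex $\nu$ of $f_\tau$ to the leaf reached from $\nu$ by one left edge followed by right edges only is a bijection onto the leaves whose word contains an $a$-letter, and under it $J(w_\ell)=\tau(\nu)$. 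Hence $|\tau^{-1}(1)|=\sum_{w}J(w)$, a sum of per-word quantities over the multiset, which is therefore the same for $\tau$ and $\tau'$; combined with $|\tau^{-1}(0)|+|\tau^{-1}(1)|=n-1=|\tau'^{-1}(0)|+|\tau'^{-1}(1)|$ this gives $\al_\tau=\al_{\tau'}$. (The paper instead pairs $\sim$-classes of vertices of $f_\tau$ with classes of $f_{\tau'}$, shows they have equal size and equal state value, and removes them iteratively.)
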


\begin{proof}
Proof of (1).
Consider vertices $\nu,\tilde\nu$ of $t$ that are equivalent under the relation $\sim$.
Denote by $\ell,\tilde\ell$ and $\nu'$ as in Definition \ref{def:sim}.
The equality $\sigma W(t,\tau)=W(t',\tau')$ together with Formula \ref{eq:W} imply that $\tau(\nu)=\tau'(\nu')$ and $\tau(\tilde\nu)=\tau'(\nu')$.

Proof of (2).
Assume that $\nu$ is a vertex of $f_\tau$ and that there are no other $\tilde\nu$ such that $\nu\sim \tilde \nu$.
We will show that the fraction $\dfrac{\sigma\circ t}{t'}$ is necessarily reducible.
Let $t_\nu$ be the maximal subtree of $t$ with root $\nu$.
Hence, the leaves of $t_\nu$ are all the leaves of $t$ that are descendant of $\nu$.
Note that since $\nu$ is a trivalent vertex we have that the tree $t_\nu$ has at least two leaves (and is thus nontrivial).
For each leaf $\ell$ of $t_\nu$ we consider $c_\ell$: the geodesic path from $\nu$ to $\ell$.
Consider now the leaf $\sigma(\ell)$ of $t'$ and $c_\ell'$ the geodesic path in $t'$ ending at $\sigma(\ell)$ and of same length than $c_\ell$.
The equality $\sigma W(t,\tau)=W(t',\tau')$ implies that the distance between $\ell$ and a root of $f_\tau$ is equal to the distance between $\sigma(\ell)$ and a root of $f_{\tau'}$.
Since $\nu$ is a vertex of $f_\tau$, the whole path $c_\ell$ is contained in $f_\tau$, and therefore the whole path $c_\ell'$ is contained in $f_{\tau'}.$ 
Denote by $s'$ the subgraph of $t'$ equal to the union of all the paths $c_\ell'$ where $\ell$ runs over all the leaves of $t_\nu$. We are going to show that $s'$ is a tree isomorphic to $t'$.

We claim that all the paths $c_\ell'$ starts at a common vertex $\nu'$ of $t'$.
Indeed, denote by $\cV'$ the set of all the sources of the paths $c_\ell'$. 
Let $f'\subset t'$ be the maximal subforest whose set of roots is equal to $\cV'$.
If $\ell'$ is a leaf of $f'$, then we can consider $\sigma^{-1}(\ell')$ which is a leaf of $t$.
By assumption there are no other $\ti\nu$ in $t$ that is equivalent to $\nu$.
This forces to have that $\sigma^{-1}(\ell')$ is a leaf of $t_\nu$ for all leaf $\ell'$ of $f'$.
Moreover, by repeating this argument we deduce that all leaves of $t_\nu$ must be equal to a certain $\sigma^{-1}(\ell')$ with $\ell'$ a leaf of $f'$, i.e.~$\sigma$ restricts to a bijection from the leaves of $t_\nu$ to the leaves of $f'.$
By using that $f'\subset f_{\tau'}$ and $t_\nu\subset f_\tau$ we deduce by an induction on the number of leaves of $t_\nu$ that $f'$ must be a tree that we write $t_\nu'$.
This uses that $W(t,\tau)$ remembers the forest $f_\tau$ and in particular the structure of subforests of it like $t_\nu$.
This proves the claim.
Hence, all $c_\ell'$ starts at a common vertex $\nu'$ of $t'$.

The equality $\sigma W(t,\tau)=W(t',\tau')$ together with the fact that $t_\nu\subset f_\tau$ and $f'\subset f_{\tau'}$ implies (via an easy induction on the number of leaves of $t_\nu$) that $\sigma$ respects the order of the leaves, i.e.~the $i$-th leaves of $t_\nu$ is sent by $\sigma$ to the $i$-th leaf of $t_\nu'$ for any $i.$ 
Using again the equality $\sigma W(t,\tau)=W(t',\tau')$ we deduce that the two trees $t_\nu$ and $t_\nu'$ are necessarily isomorphic (as ordered rooted binary trees).
This implies that we can reduce the fraction $\dfrac{\sigma\circ t}{t'}$ by removing $t_\nu$ and $t_\nu'$ at the numerator and denominator. Since $t_\nu$ was supposed to be nontrivial we obtain that our fraction $\dfrac{\sigma\circ t}{t'}$ is reducible, 
a contradiction.

Proof of (3).
By Lemma \ref{lem:inj} there are most one $\tau'\in \St(t')$ satisfying $\sigma W(t,\tau) = W(t',\tau').$
Let us assume we are in this situation for a fixed pair $(\tau,\tau').$
If $f_\tau$ is trivial (is a forest with only trivial trees), then $W(t,\sigma)$ is a list of trivial words and thus so does $W(t',\tau')$ implying that $f_{\tau'}$ is trivial. 
Therefore, $\al_\tau = \al^{n-1} = \al_{\tau'}$ where $n$ is the number of leaves of $t.$
Assume that $f_\tau$ is non-trivial and consider a vertex $\nu$ of $f_\tau$ that is connected to a leaf by an edge. 
Let $[\nu]$ be the equivalence class of $\nu$ w.r.t.~the relation $\sim.$
Consider all geodesic paths $c$ contained in $f_\tau$ starting at a root and ending at a leaf that are passing through an element of $[\nu]$. 
Define the images $c'$ of each of those paths inside $f_{\tau'}$ as explained in Proof of (2) and put $W$ the set of all last trivalent vertices (i.e.~trivalent vertices connected to a leaf) of paths $c'$.
It is easy to see that $W$ is equal to an equivalence class $[\nu']$ for a certain vertex $\nu'$ of $f_{\tau'}.$
The definition of the equivalence relation $\sim$ implies that $\sigma$ restricts to a bijection from the set of leaves that are descendant of vertices in the class $[\nu]$ to the set of leaves that are descendant of vertices in the class $[\nu']$.
The order of the class $[\nu]$ is equal to the number of leaves that are children of vertices in $[\nu]$ divided by two and thus $[\nu]$ and $[\nu']$ have same order. 
By (1), we have that the states $\tau$ and $\tau'$ take a unique value ($0$ or $1$) for any element of $[\nu]$ and $[\nu']$ that is $\tau(\nu)=\tau'(\nu').$
Consider the forests $\tilde f,\tilde f'$ that are the subforests of $f_\tau,f_{\tau'}$ obtained by removing the set of vertices $[\nu],[\nu']$ and edges starting from them, respectively. 
By applying our process to $\tilde f,\tilde f'$ we are able to show that $\al(f_\tau,\tau) = \al(f_{\tau'},\tau')$ where $\al(f_\tau,\tau)=\al^A \beta^B$ for $A$ (resp. $B$) the number of vertices of $f_\tau$ for which $\tau$ takes the value $0$ (resp. $1$).
The forest $f_\tau$ and $f_{\tau'}$ have necessarily the same number of vertices and thus so does $z_\tau$ and $z_{\tau'}.$
Since $\al_\tau = \al(f_\tau,\tau) \al^{N}$ where $N$ is the number of vertices of $z$, we obtain that $\al_\tau=\al_{\tau'}.$
\end{proof}

\subsection{Splitting the sum over rooted subtrees.}
We further decompose the sum 
$$\Phi(t)\de_e = \sum_{\tau\in\St(t)}\al_\tau \de_{W(t,\tau)}$$ by using rooted subtrees of $t.$
Let $E(t)$ be the set of all rooted subtrees of $t$ (including the trivial one and $t$).
For any $z\in E(t)$ we write $\St(t,z)$ for the set of states $\tau$ satisfying $z_\tau = z$, see Notation \ref{not:ztau}.
We obtain the following decomposition:
\begin{equation}\label{eq:phitone}
\Phi(t)\de_e = \sum_{z\in E(t)} \sum_{\tau\in \St(t,z)} \al_\tau \de_{W(t,\tau)}.
\end{equation}
Given $z\in E(t)$ we consider the unique forest $f=f_z$ satisfying that $t=f_z\circ z$. 
Fix a state $\tau\in \St(t,z)$.
For any trivalent vertex $\nu$ of $z$ we have that $\tau(\nu)=0$ and there are $n(z)-1$ of them if $n(z)$ denotes the number of leaves of $z$.
If a leaf $\nu$ of $z$ is a trivalent vertex of $t$ (i.e.~is not a leaf of $t$), then necessarily $\tau(\nu)=1$ by maximality of $z=z_\tau$. Let $b(z)$ be the number of those. 
Then $\tau$ can take any values on the other vertices of $t$, that are the vertices of $f$ that are not leaves of $z$ (trivalent vertices of $f$ that are not roots of $f$). Note that there are $n(t)-n(z)-b(z)$ such vertices and we set $m(z)$ this number and $\cV_1(f)$ those trivalent vertices.
We obtain the formula:
$$\al_\tau = \al^{n(z) -1 } \beta^{b(z)} \al_{1,\tau}(f)$$ where $\al_{1,\tau}(f)$ is a monomial in $\al,\beta$ of degree $m(z)$ that only depends on the restriction $\tau\vert_{\cV_1(f)}$.

\newcommand{\ttwofone}{
\begin{tikzpicture}[baseline=.8cm]
\draw (0,0)--(0,-.5);
\draw (0,0)--(-1,2);
\draw (-.5,1)--(0,2);
\draw (0,0)--(1,2);
\draw (-.75,1.5)--(-.625 , 2);
\draw (-.25,1.5)--(-.375,2);
\node at (-.8,1) {$\nu_2$};
\node at (-.3,0) {$\nu_1$};
\node at (-.95,1.5) {$\nu_3$};
\node at (0,1.5) {$\nu_4$};
\end{tikzpicture}
}

\newcommand{\ttwoI}{
\begin{tikzpicture}[baseline=1.2cm]
\draw (-.5,1)--(-.5,.75);
\draw (-.5,1)--(-1,2);
\draw (-.5,1)--(0,2);
\draw (-.75,1.5)--(-.625 , 2);
\draw (-.25,1.5)--(-.375,2);
\draw (1,.75)--(1,2);
\node at (-.8,1) {$\nu_2$};
\node at (-.95,1.5) {$\nu_3$};
\node at (0,1.5) {$\nu_4$};
\end{tikzpicture}
}

For example, $$\text{ if } t = \ttwofone \text{ and } z=Y \text{ , then } f_z = \ttwoI \ .$$
We obtain that $n(z)=2, n(t)= 5, b(z) = 1, m(z) = 2$ and $\cV_1(f_z) = \{\nu_3,\nu_4\}$.
If $\tau\in\St(t,z)$, then necessarily $\tau(\nu_1)=0,\tau(\nu_2)=1$ and $\tau$ can take any values at $\nu_3$ and $\nu_4$.

Equality \eqref{eq:phitone} becomes
\begin{equation}\label{eq:phittwo}
\Phi(t)\de_e = \sum_{z\in E(t)} \al^{n(z) -1 } \beta^{b(z)} \sum_{\tau\in \St(t,z)} \al_{1,\tau}(f_z) \de_{W(t,\tau)}.
\end{equation}
\begin{notation}Write $\St(t,z)_+$ for the set of states $\tau$ satisfying that $z_\tau=z$ and such that there exists $\tau'\in\St(t')$ for which $\sigma W(t,\tau)=W(t',\tau').$\end{notation}
Proposition \ref{prop:sim} implies that:
\begin{equation}\label{eq:phittwo}
\phi(v) = \sum_{z\in E(t)} \al^{2n(z) -2 } \beta^{2b(z)} \sum_{\tau\in \St(t,z)_+} \al_{1,\tau}(f_z)^2.
\end{equation}

The following lemma provides a useful bound on the second part of the sum \eqref{eq:phittwo}.

\begin{lemma}\label{lem:S(tz)}
If $v=\dfrac{\sigma\circ t}{s}$ is a reduced fraction, then for any $z\in E(t)$, we have that 
\begin{equation}\label{eq:ztermone}
\sum_{\tau\in \St(t,z)_+} \al_{1,\tau}(f_z)^2 \leq (\al^4 + \beta^4)^{\dfrac{m(z)}{2}}.
\end{equation}
\end{lemma}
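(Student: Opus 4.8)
The plan is to exploit the key structural fact established in Proposition \ref{prop:sim}(1)-(2): when $v = \dfrac{\sigma\circ t}{s}$ is reduced, every vertex of $f_\tau$ must lie in a $\sim$-equivalence class containing at least two vertices, and $\tau$ is constant on each such class. My strategy is to organize the sum $\sum_{\tau\in\St(t,z)_+}\al_1(f_z)^2$ according to the $\sim$-equivalence classes of the vertices in $\cV_1(f_z)$, the $m(z)$ free vertices of the forest $f_z = f_z$, and to show that the pairing forced by reducedness gives enough cancellation to reach the bound $(\al^4+\beta^4)^{m(z)/2}$.

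First I would fix $z\in E(t)$ and recall that for $\tau\in\St(t,z)$ the weight factorizes as $\al_\tau = \al^{n(z)-1}\beta^{b(z)}\al_1(f_z)$, where $\al_1(f_z) = \al^{|\tau^{-1}(0)\cap\cV_1(f_z)|}\,\beta^{|\tau^{-1}(1)\cap\cV_1(f_z)|}$ is a monomial depending only on the restriction of $\tau$ to the $m(z)$ vertices in $\cV_1(f_z)$. Thus $\al_1(f_z)^2 = \prod_{\nu\in\cV_1(f_z)} w(\tau(\nu))$, where $w(0)=\al^2$ and $w(1)=\beta^2$. Next I would use Proposition \ref{prop:sim}: a state $\tau$ contributes to $\St(t,z)_+$ only if $\tau$ is constant on $\sim$-classes, and by part (2) every vertex of $f_\tau$ — in particular every vertex of $\cV_1(f_z)$ on which $\tau$ takes a nontrivial value — sits in a $\sim$-class of size at least $2$. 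The heart of the argument is that the free vertices in $\cV_1(f_z)$ are thereby partitioned into $\sim$-classes, on each of which $\tau$ must take a single common value; when I sum the product over the two allowed common values $0$ and $1$ of a class $C$ of size $|C|\ge 2$, I get a factor $\al^{2|C|}+\beta^{2|C|} \le \al^4+\beta^4$ (using $\al^{2|C|}+\beta^{2|C|}$ decreasing in $|C|$ for $\al,\beta\in(0,1)$, $\al^2+\beta^2=1$). Multiplying over all classes, and noting the total vertex count $\sum_C |C| = m(z)$ forces the number of classes to be at most $m(z)/2$, yields the stated bound.

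The main obstacle I anticipate is bookkeeping rather than conceptual: I must verify that the constraint "$\tau$ constant on $\sim$-classes, with no singleton classes among the contributing vertices" actually decouples the sum over $\St(t,z)_+$ into an independent product over classes, i.e.~that different $\sim$-classes can be assigned their $\{0,1\}$-values independently without violating membership in $\St(t,z)_+$, and that the factorization of $\al_1(f_z)^2$ respects this partition. There is a subtlety in that $\St(t,z)_+$ is defined by the \emph{existence} of a matching $\tau'\in\St(t')$, so I would want the converse direction only as an upper bound: I simply overcount by dropping the matching requirement and summing over all $\tau$ that are constant on $\sim$-classes with every free vertex in a class of size $\ge 2$. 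This overcounting can only increase the sum, so the inequality is preserved, and the elementary estimate $\al^{2|C|}+\beta^{2|C|}\le \al^4+\beta^4$ closes the proof.
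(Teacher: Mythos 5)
Your setup---restricting to states that are constant on $\sim$-classes (Proposition \ref{prop:sim}(1)), noting that by reducedness every class meeting $\cV_1(f_z)$ has size at least $2$ (Proposition \ref{prop:sim}(2)), and overcounting by dropping the matching requirement so that the sum is dominated by an independent product over classes---is exactly the paper's decomposition, and that part is sound. The gap is in your final step. Writing $m_1,\dots,m_k$ for the sizes of the classes partitioning $\cV_1(f_z)$, your per-class estimate $\al^{2m_j}+\beta^{2m_j}\le \al^4+\beta^4$ yields
$$\sum_{\tau\in\St(t,z)_+}\al_1(f_z)^2 \;\le\; \prod_{j=1}^k\bigl(\al^{2m_j}+\beta^{2m_j}\bigr)\;\le\;(\al^4+\beta^4)^{k},$$
and you then invoke $k\le m(z)/2$. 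But the base satisfies $0<\al^4+\beta^4<1$, so $k\le m(z)/2$ gives $(\al^4+\beta^4)^{k}\ge(\al^4+\beta^4)^{m(z)/2}$: the inequality runs in the wrong direction, and your chain only establishes the weaker bound $(\al^4+\beta^4)^{k}$. This is a genuine loss, not a cosmetic one: $k$ can be far smaller than $m(z)/2$ (in the extreme, all of $\cV_1(f_z)$ could form a single class), and the exponent $m(z)/2$ is precisely what the end of Section \ref{sec:VH} consumes, since the subtrees with $m(z)>h(n)$ are controlled by $(\al^4+\beta^4)^{h(n)/2}\to 0$; a bound depending only on the number of classes would not feed into that argument.

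The repair is to strengthen the per-class inequality so that the exponent tracks the class size rather than counting the classes: for every $m\ge 2$,
$$\al^{2m}+\beta^{2m}\;\le\;(\al^4+\beta^4)^{m/2},$$
which is monotonicity of $\ell^p$-norms applied to the vector $(\al^2,\beta^2)$ (the paper proves it by an elementary calculus argument, Inequality \eqref{eq:prodtwo}). Multiplying these over the classes then gives $\prod_{j=1}^k(\al^4+\beta^4)^{m_j/2}=(\al^4+\beta^4)^{m(z)/2}$ because $\sum_{j=1}^k m_j=m(z)$, which is exactly how the paper concludes. With this single substitution your argument coincides with the paper's proof.
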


\begin{proof}
Fix $z\in E(t)$ and $\tau\in \St(t,z)_+$. Let $f=f_z$ be the unique forest satisfying that $t = f\circ z$.
It is easy to see that if $\nu\in \cV_1(f)$ and $\nu\sim \tilde\nu$ with $\tilde\nu\in \cV(t)$, then necessarily $\tilde \nu$ belongs to $\cV_1(f).$
We partition $\cV_1(f)$ as a union of equivalence classes $[\nu_1],\cdots , [\nu_k]$ w.r.t.~the relation $\sim$ where $\nu_1,\cdots,\nu_k$ is a set of representatives.
Let $m_j$ be the number of elements in the class $[\nu_j]$ and note that $m(z) = \sum_{j=1}^k m_k.$
We obtain that 
$$\al_{1,\tau}(f_z) = \al_{\tau,1}^{m_1}\cdots \al_{\tau,k}^{m_k}$$ where 
$$\al_{\tau,j} := \begin{cases}
\al \text{ if } \tau(\nu_j) =0 \\
\beta \text{ otherwise }
\end{cases}.$$
Therefore, 
$$\sum_{\tau\in \St(t,z)_+} \al_{1,\tau}(f_z)^2  = \sum_{\tau\in \St(t,z)_+}\al_{\tau,1}^{2m_1}\cdots \al_{\tau,k}^{2m_k}.$$
A state $\tau\in \St(t,z)_+$ is thus completely characterized by its values at $\nu_1,\cdots,\nu_k$. 
There are at most $2^k$ such states. 
Hence we obtain $$\sum_{\tau\in \St(t,z)_+} \al_{1,\tau}(f_z)^2\leq \sum_{\kappa} \kappa(1)^{m_1}\cdots\kappa(k)^{m_k}$$
where $\kappa$ runs over all maps from $\{1,\cdots,k\}$ to $\{\al^2,\beta^2\}.$
This sum is then equal to $\prod_{j=1}^k ( ( \al^2)^{m_j} + (\beta^2)^{m_j} )$ and thus
\begin{equation}\label{eq:prodone}
\sum_{\tau\in \St(t,z)_+} \al_{1,\tau}(f_z)^2 \leq \prod_{j=1}^k ( ( \al^2)^{m_j} + (\beta^2)^{m_j} ).
\end{equation}

Note that we have 
\begin{equation}\label{eq:prodtwo}
(\al^2)^m + (\beta^2)^m \leq (\al^4 + \beta^4)^{\dfrac{m}{2}} \text{ for any } m\geq 2.
\end{equation}
Indeed, assume that $\al\geq \beta$ and set $\rho := \dfrac{\beta^4}{\al^4}$ that is in $(0,1].$ 
Consider the function $$g(x) := (1+\rho)^x - (1+\rho^x)$$ for $x\geq 1.$
We have $$g'(x) = \log(1+\rho) (1+\rho)^x - \log(\rho) \rho^x$$ that is strictly positif for any $x\geq 1$ since $\log(\rho)\leq 0$ and $\log(1+\rho)>0.$
Therefore, $g$ is strictly increasing and thus $g(m/2) \geq g(1) = 0$ for any $m\geq 2.$
We obtain that $1 + \rho^{m/2} \leq (1+\rho)^{m/2}$ and thus Inequality \eqref{eq:prodtwo} by multiplying by $\al^{2m}$ for any $m\geq 2.$

By Proposition \ref{prop:sim} we have that $m_j\geq 2$ for any $1\leq j\leq k$.
Therefore, Inequalities \eqref{eq:prodone} and \eqref{eq:prodtwo} imply that
$$\sum_{\tau\in \St(t,z)_+} \al_{1,\tau}(f_z)^2  \leq \prod_{j=1}^k (\al^4 + \beta^4)^{\dfrac{m_j}{2}} = (\al^4 + \beta^4)^{\dfrac{m(z)}{2}}.$$
\end{proof}

Consider the map
$$h(n) := \dfrac{1}{2} \log_2( \dfrac{n}{2}),$$ 
where $\log_2$ is the logarithm in base $2$.
We now split rooted subtrees $z\in E(t)$ in two categories: the ones satisfying $m(z)>h(n)$ and the others.
Observe that 
\begin{equation}
\sum_{\substack{ z\in E(t)\\ m(z)>h(n)}} \al^{2n(z) -2 } \beta^{2b(z)} \sum_{\tau\in \St(t,z)_+} \al_{1,\tau}(f_z)^2 \leq \sum_{z\in E(t)} \al^{2n(z) -2 } \beta^{2b(z)} (\al^4 + \beta^4)^{ \dfrac{h(n)}{2} } = (\al^4 + \beta^4)^{ \dfrac{h(n)}{2} }.
\end{equation}
This term tends to zero as $n$ goes to infinity.
So we only need to consider the rest of rooted subtrees for which $m(z)\leq h(n)$.

\begin{lemma}\label{lem:sumS} 
We have the inequality $$\sum_{\substack{\tau\in \St(t) \\  m(z_\tau)\leq h(n)}} \al_\tau^2 \leq \al^{2 h(n)}.$$
\end{lemma}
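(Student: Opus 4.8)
The plan is to collapse the sum over states into a sum over rooted subtrees, turn that into a statement about the coefficients of a generating polynomial with a nice multiplicative recursion, and then bound the low-degree part by a counting argument on the tree.

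First I would record the elementary identity that makes the reduction work. Fix $z\in E(t)$. A state $\tau\in\St(t,z)$ is exactly the data of the forced values $\tau\equiv 0$ on the $n(z)-1$ internal vertices of $z$ and $\tau\equiv 1$ on the $b(z)$ leaves of $z$ that are trivalent in $t$, together with an arbitrary choice on the $m(z)$ vertices of $\cV_1(f_z)$; since $\al^2+\beta^2=1$, summing the free monomials $\al_1(f_z)^2$ over the $2^{m(z)}$ choices gives $1$, so $\sum_{\tau\in\St(t,z)}\al_\tau^2=\al^{2(n(z)-1)}\beta^{2b(z)}$. Because $z_\tau=z$ for every $\tau\in\St(t,z)$ and the sets $\St(t,z)$ partition $\St(t)$, this gives
$$\sum_{\substack{\tau\in\St(t)\\ m(z_\tau)\le h(n)}}\al_\tau^2=\sum_{\substack{z\in E(t)\\ m(z)\le h(n)}}\al^{2(n(z)-1)}\beta^{2b(z)}.$$
This is the part of degree $\le h(n)$ of the polynomial $P_t(x):=\sum_{z\in E(t)}\al^{2(n(z)-1)}\beta^{2b(z)}x^{m(z)}$, whose coefficients are nonnegative and sum to $P_t(1)=1$.

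Next I would exploit the root decomposition $t=Y(t_1,t_2)$. Since $m(\cdot)$ and $b(\cdot)$ are additive under this $\ot$-join while $n(\cdot)-1$ adds with a $+1$ shift for the new root vertex, and since the trivial subtree $z=I$ contributes the single term $\beta^2x^{n-2}$, one gets the recursion $P_t(x)=\beta^2 x^{n-2}+\al^2 P_{t_1}(x)P_{t_2}(x)$. Writing $S_t(H)$ for the sum of the coefficients of $P_t$ of degree $\le H$, it follows that whenever $n>H+2$ the top term has degree $>H$ and disappears, so that $S_t(H)\le \al^2\,S_{t_1}(H)\,S_{t_2}(H)$ (using $\{a+b\le H\}\subseteq\{a\le H\}\times\{b\le H\}$ and nonnegativity of the coefficients).

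Now I would unroll this inequality. Set $N_0:=H+2$ with $H=h(n)$ and apply the recursive bound at every internal vertex $\nu$ whose subtree $t_\nu$ has $n(t_\nu)>N_0$ leaves — precisely the vertices at which the $\beta$-term is absent, so each step is legitimate — stopping at the maximal subtrees $s$ with $n(s)\le N_0$, where I use only the trivial bound $S_s(H)\le P_s(1)=1$. This produces $S_t(H)\le \al^{2K}$, where $K$ is the number of internal vertices $\nu$ with $n(t_\nu)>N_0$. These maximal small subtrees partition the $n$ leaves of $t$ into blocks of size $\le N_0$, hence there are at least $n/N_0$ of them, and since they are the leaves of the binary tree obtained by contracting each of them, $K\ge n/N_0-1$. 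Therefore $S_t(H)\le\al^{2(n/N_0-1)}$ (using $0<\al<1$). Plugging $H=h(n)$, i.e. $n=2^{2H+1}$ and $N_0=H+2$, it remains to verify the purely elementary inequality $n/N_0-1\ge H$, that is $2^{2H+1}\ge(H+1)(H+2)$, which holds for all integers $n\ge 3$; the cases $n=1,2$ (where $h(n)\le 0$) are immediate, the first being vacuous. This yields $S_t(h(n))\le \al^{2h(n)}$.

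The main obstacle is that one cannot prove the bound by counting states directly: estimating the number of $\tau$ with $m(z_\tau)\le h(n)$ by the total $2^{n-1}$ and multiplying by the largest value of $\al_\tau^2$ fails for $\al$ close to $1$, since the exponential state count then beats the geometric decay. The crux is therefore to keep the multiplicative (subtree/product) structure of $P_t$ intact, which forces one to discard the $\beta$-terms only where they do not interfere (hence the threshold $N_0=H+2$) and to convert the decay into the combinatorial count $K\ge n/N_0-1$; the closing numeric inequality $2^{2H+1}\ge(H+1)(H+2)$ is then routine, with enormous slack once $n$ is large, which is all that is needed for the vanishing-at-infinity application.
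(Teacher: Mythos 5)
Your proof is correct, but it takes a genuinely different route from the paper's. The paper argues by pigeonhole on the depth of $t$: since $t$ has $n-1$ trivalent vertices, it cannot fit inside the full binary tree of depth $2h(n)$ (which has only $n/2-1$ vertices), so its longest root-to-leaf path carries more than $2h(n)$ trivalent vertices; a rooted subtree $z$ with $m(z)\le h(n)$ can miss at most $h(n)+1$ vertices of that path, hence contains its first $h(n)$ vertices. Thus every state in the sum is forced to take the value $0$ on one \emph{fixed} set $A$ of $h(n)$ vertices, and factoring out $\al^{2|A|}$ against the normalization $\sum_\gamma \al_\gamma^2=1$ finishes the proof in two lines. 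You instead keep the multiplicative structure of the tree: your root recursion $P_t(x)=\beta^2x^{n-2}+\al^2P_{t_1}(x)P_{t_2}(x)$ is correct (the quantities $n(z)-1$, $b(z)$, $m(z)$ do behave additively under the join as you claim, with the $+1$ shift accounted for by the factor $\al^2$), the truncation inequality $S_t(H)\le\al^2 S_{t_1}(H)S_{t_2}(H)$ for $n>H+2$ is valid by nonnegativity of the coefficients, and the contraction argument correctly gives $K\ge n/N_0-1$ vertices whose subtrees are large. What your approach buys is a substantially stronger bound: you get $S_t(h(n))\le\al^{2K}$ with $K\gtrsim 2n/\log_2 n$, i.e.\ decay exponential in $n/\log n$, whereas the paper's bound $\al^{2h(n)}=(n/2)^{\log_2\al}$ is only polynomial in $n$ (either suffices for the vanishing-at-infinity argument). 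The price is the closing numerical inequality $2^{2H+1}\ge(H+1)(H+2)$, which is genuinely tight at $n=3$ (where $3$ barely exceeds $\approx 2.96$) and so deserves the explicit verification you gave it; the paper's argument needs no such check. Both proofs rest on the same two facts from the surrounding text: the partition of $\St(t)$ into the sets $\St(t,z)$ with $\sum_{\tau\in\St(t,z)}\al_\tau^2=\al^{2(n(z)-1)}\beta^{2b(z)}$, and the normalization $\al^2+\beta^2=1$.
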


\begin{proof}
We start by proving that there exists a subset of vertices $A\subset \cV(t)$ having $h(n)$ elements that is contained in the vertex set of any rooted subtree $z\in E(t)$ satisfying that $m(z)\leq h(n)$, i.e.~ $$| \bigcap_{\substack{z\in E(t)\\ m(z)\leq h(n)}} \cV(z) | \geq h(n).$$

Recall that $t$ is a tree with $n$ leaves and thus has $n-1$ trivalent vertices.
Consider the longest geodesic path $c$ inside $t$ starting from the root and ending at one leaf. 
We claim that the length $|c|$ of this path is larger than $2h(n)+1.$
Assume by contradiction that any path in $t$ has length less than $2h(n).$
This implies that $t$ is a rooted subtree of the full rooted binary tree having $2^{2h(n)}$ leaves all at distance $2h(n)$ from the root. 
This tree has $2^{2h(n)}-1$ vertices that is $2^{\log_2(n/2)}-1 = n/2-1$.
Since $t$ has $n-1$ vertices we obtain a contradiction.

Therefore, there exists a path $c\in \Path(t)$ of length larger than $2h(n)+1.$ The path $c$ contains at least $2h(n)$ trivalent vertices of $t$. 
Consider a rooted subtree $z\in E(t)$ such that $m(z)\leq h(n).$ 
There are at most $h(n)+1$ vertices of $c$ that are not inside $z$. Those vertices are necessarily the one at the end of $c$ that are the $h(n)+1$ last one. Therefore, $\cV(z)$ contains at least the $h(n)$ first vertices of $c$. This proves that there is a subset $A\subset\cV(t)$ of $h(n)$ elements contained in every rooted subtree $z\in E(t)$ for which $m(z)\leq h(n).$
Therefore, if $\tau$ is a state on $t$ satisfying that $m(z_\tau)\leq h(n)$, then $\tau(\nu)=0$ for any $\nu\in A$.
Therefore, $$\sum_{\substack{\tau\in \St(t)  \\ m(z_\tau)\leq h(n)}} \al_\tau^2 \leq \al^{2|A|} \sum_{\gamma} \al_\ga^2,$$
where $\ga$ runs over every maps from $\cV(t)\setminus A \to \{0,1\}$ and where $\al_\ga=\al^{|\ga^{-1}(0)|}\beta^{|\ga^{-1}(1)|}$. But $\sum_\ga \al_\ga^2 = 1$ and thus 
$$\sum_{\substack{\tau\in \St(t)  \\ m(z_\tau)\leq h(n)}} \al_\tau^2 \leq \al^{2h(n)}.$$
\end{proof}

\subsection{End of the proof.}

For $v=\dfrac{\sigma\circ t}{s}$ a reduced fraction with trees having $n$ leaves we have the following:

\begin{align*}
\phi(v) & = \sum_{z\in E(t)} \al^{2n(z) -2 } \beta^{2b(z)} \sum_{\tau\in \St(t,z)_+} \al_{1,\tau}(f_z)^2 \text{ by } \eqref{eq:phittwo}\\
& \leq \sum_{z\in E(t)} \al^{2n(z) -2 } \beta^{2b(z)} (\al^4 + \beta^4)^{\dfrac{m(z)}{2}} \text{ by Lemma \ref{lem:S(tz)} }\\
& \leq \sum_{\substack{z\in E(t) \\ m(z)>h(n)}} \al^{2n(z) -2 } \beta^{2b(z)} (\al^4 + \beta^4)^{\dfrac{h(n)}{2}} + \sum_{\substack{ z\in E(t)\\ m(z) \leq h(n) } }\sum_{\tau\in \St(t,z)} \al_\tau^2\\
&\leq \left(\sum_{\substack{z\in E(t)\\ m(z)>h(n)}} \al^{2n(z) -2 } \beta^{2b(z)}\right) (\al^4 + \beta^4)^{\dfrac{h(n)}{2}} + \al^{2 h(n)} \text{ by Lemma \ref{lem:sumS} }\\
&\leq \left( \sum_{z\in E(t) } \al^{2n(z) -2 } \beta^{2b(z)} \right) (\al^4 + \beta^4)^{\dfrac{h(n)}{2}} + \al^{2 h(n)} \\
&\leq (\al^4 + \beta^4)^{\dfrac{h(n)}{2}} + \al^{2 h(n)} \text{ since } \sum_{z\in E(t)} \al^{2n(z) -2 } \beta^{2b(z)} =1.\\
\end{align*}
Since $\lim_{n\to\infty} h(n)=\infty$ and $0<\al , \al^4+\beta^4 <1$, we obtain that $\lim_{n\to\infty}\sup_{V\setminus V_n}|\phi(v)|=0$ where $V_n$ is the subset of $V$ of elements that can be written as a fraction of symmetric trees with less than $n-1$ leaves.
Since $(V_n)_n$ is an increasing sequence of finite subsets of $V$ whose union is equal to $V$ we obtain that $\phi$ vanishes at infinity.

\begin{remark}\label{rem:Farley}
We have proven that for any $0<\al<1$ the map $\phi_\al:V\to\C$ is a positive definite function that vanishes at infinity. Moreover, $\lim_{\al\to 1}\phi_\al(v)=1$ for any $v\in V$ implying that $V$ has the Haagerup property.
This theorem was first proved by Farley where he defined a proper cocycle on $V$ with value in a Hilbert space \cite{Farley03-H}.
Using Schoenberg Theorem applied to the square of the norm of this cocycle we obtain a one parameter family of positive definite maps $f_\al:V\to\C, 0<\al<1$ satisfying that $f_\al(v)= \al^{2n(v)-2}$ where $n(v)$ is the minimum number of leaves for which $v$ is described by a fraction of symmetric trees with $n(v)$ leaves.
In \cite{Brothier-Jones19}, Jones and the author constructed a family of positive definite maps on $V$ that coincide with the maps of Farley when restricted to Thompson's group $T$, see \cite[Remark 1]{Brothier-Jones19}, but do not vanishes at infinity on the group $V$.
A similar observation shows that the restriction to $T$ of our maps $\phi_\al$ coincide with the maps of Farley. 
However, those three families of maps no longer coincide on the whole group $V$.
\end{remark}

\section{A class of wreath products with the Haagerup property}\label{sec:wreathprod}
Following the preliminary section we consider a group $\Ga$, an injective morphism $S:\Ga\to \Ga\oplus\Ga$, the associated monoidal functor 
$$\Xi:\cF\to\Gr,\ \Xi(1)=\Ga,\ \Xi(Y)=S,$$ 
and the associated category $\cC_\Xi=\cC$.
Write $G_\cC$ for the group of fractions of the category $\cC$ (at the object $1$). 

\subsection{Constructions of unitary representations}

Given a representation of $\Ga$ and an isometry $R:\fH\to\fH\otimes \fH$ we want to construct a representation of the larger group $G_\cC.$
To do this we will define a monoidal functor $\Psi:\cC_\Xi\to\Hilb$ and then use Jones' technology.
We start by explaining how to build such a functor.

\begin{proposition}\label{prop:functor}
There is a one to one correspondance between monoidal functors $\Psi:\cC_\Xi\to \Hilb$ and pairs $(\rho,R)$ satisfying the properties:
\begin{enumerate}
\item $\rho:\Ga\to \cU(\fH)$ is a unitary representation;
\item $R:\fH\to \fH\ot\fH$ is an isometry;
\item $R\circ \rho(g) = (\rho\ot\rho)(S(g)) \circ R, \ \forall g\in \Ga$.
\end{enumerate}
The correspondance is given by $$\Psi\mapsto (\rho_\Psi , \Psi(Y))$$ where $\rho_\Psi(g) : = \Psi(g)$ for all $g\in\Ga.$
\end{proposition}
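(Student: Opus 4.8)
The plan is to treat this as the standard statement that a symmetric monoidal functor out of a concretely presented symmetric monoidal category is freely determined by its values on the generating object and generating morphisms, subject to the defining relations. Recall from the preliminaries that every morphism $n\to m$ of $\cC_\Xi$ has the normal form $\sigma\circ g\circ f$ with $f\in\cF(n,m)$, $g=(g_1,\dots,g_m)\in\Ga^m$ and $\sigma\in S_m$, and that the symmetric monoidal structure forces any such functor to send a permutation $\sigma$ to the symmetry $\Tens(\sigma)$ of $\Hilb$ (this is why no datum for permutations appears in the pair). For the forward direction, given $\Psi$ I set $R:=\Psi(Y)$ and $\rho_\Psi:=\Psi|_\Ga$. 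Condition (2) is immediate since every morphism of $\Hilb$ is an isometry; condition (1) holds because a functor sends the invertible morphisms $g\in\cC(1,1)$ to invertible isometries, i.e.\ unitaries, and respects composition, so $\rho_\Psi$ is a homomorphism into $\cU(\fH)$; condition (3) is obtained by applying $\Psi$ to the relation $Y\circ g=S(g)\circ Y$ holding in $\cC_\Xi$ and using monoidality to rewrite $\Psi(S(g))$ as $(\rho_\Psi\ot\rho_\Psi)(S(g))$.

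For the converse I would construct $\Psi$ explicitly. On objects set $\Psi(n):=\fH^{\ot n}$; let $\Phi:\cF\to\Hilb$ be the monoidal functor determined by the isometry $R$ as in Section \ref{sec:monoidal}, and on a normal-form morphism declare
$$\Psi(\sigma\circ g\circ f):=\Tens(\sigma)\circ\rho^{\ot m}(g)\circ\Phi(f),\qquad \rho^{\ot m}(g):=\rho(g_1)\ot\cdots\ot\rho(g_m).$$
This sends identities to identities, so the entire content is functoriality. Given two composable normal forms, I would rewrite their composite back into normal form using the three structural moves of $\cC_\Xi$ — sliding a forest past a permutation ($f\circ\sigma=S(f,\sigma)\circ\sigma(f)$), sliding a forest past labels ($f\circ g=\Xi(f)(g)\circ f$), and sliding a permutation past labels ($\sigma\circ g=(g_{\sigma^{-1}(1)},\dots,g_{\sigma^{-1}(m)})\circ\sigma$) — and check that the value of $\Psi$ is preserved by each move. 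The first and third moves are handled by functoriality of $\Phi$ on $\cF$ and by naturality of the symmetry $\Tens$ (applied to the tuple $(\rho(g_i))_i$), both already available, while stacking two layers of labels uses only that $\rho$ is a homomorphism.

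The main obstacle, and the only place hypothesis (3) is used, is the forest-past-labels move. For this I would first establish, by induction on the number of carets of $f$, the propagated intertwining identity
$$\Phi(f)\circ\rho^{\ot n}(g)=\rho^{\ot m}\big(\Xi(f)(g)\big)\circ\Phi(f),\qquad f\in\cF(n,m),\ g\in\Ga^n.$$
The trivial forest is immediate and the base case $f=Y$ is exactly condition (3); the inductive step writes $f$ as a composite of some $f_{i,k}=\id^{\ot i-1}\ot Y\ot\id^{\ot k-i}$ with a smaller forest and combines monoidality of $\Phi$, monoidality of $\Xi$ (so that $\Xi(f)$ factors compatibly), and compatibility of $\rho^{\ot}$ with $\ot$. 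Once this identity is in hand all four rewriting moves preserve $\Psi$, so $\Psi$ is a well-defined symmetric monoidal functor. Finally the two assignments are mutually inverse: rebuilding from $(\rho_\Psi,\Psi(Y))$ recovers $\Psi$ on every normal form since a monoidal functor is determined by its values on $Y$, on $\Ga$ and on the forced symmetry, while extracting the pair from a $\Psi$ built out of $(\rho,R)$ returns $(\rho,R)$ tautologically. I expect the caret-induction for the intertwining identity and the normal-form bookkeeping to be the only real work, everything else being formal.
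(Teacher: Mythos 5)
Your proof is correct and takes essentially the same route as the paper: the forward direction extracts $(\rho,R)$ by applying $\Psi$ to the relation $Y\circ g=S(g)\circ Y$, and the converse rests on the fact that a monoidal functor is determined by its values on the generators $Y$, $g\in\Ga$ and the (forced) images $\Tens(\sigma)$ of permutations. The only difference is one of detail, not of method: the paper compresses the converse into a single sentence, while you spell out the normal-form definition $\Psi(\sigma\circ g\circ f)=\Tens(\sigma)\circ\rho^{\ot m}(g)\circ\Phi(f)$, the three rewriting moves, and the caret induction yielding the intertwining identity $\Psi(f)\circ\rho^{\ot n}(g)=\rho^{\ot m}\bigl(\Xi(f)(g)\bigr)\circ\Psi(f)$ — precisely the identity the paper records only as a remark after its proof.
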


\begin{proof}
Consider a monoidal functor $\Psi$ and the associated couple $(\rho,R)$. 
The two first properties come from the fact that morphisms of $\Hilb$ are linear isometries.
The third property results from the computation of $\Psi(Y\circ g)$ and the equality $Y\circ g=S(g)\circ Y$ inside the category $\cC$ for all $g\in\Ga.$
Since any morphism of $\cC_\Xi$ is the composition of tensor products of $g\in\Ga$, the tree $Y$, and some permutations we have that those properties completely characterized $\Psi$ and are sufficient.
\end{proof}
Note that a functor $\Psi$ as above satisfies the equality $$\Psi(f)\circ \rho^{\ot n}(g) = \rho^{\ot m} (\Xi(f)(g)) \circ \Psi(f), \ \forall f\in\cF(n,m), g\in \Ga^n.$$

{\bf Assumption.} 
From now one we assume that $S(g)=(g,e)$ and thus the group of fractions $G_\cC$ is isomorphic to $\oplus_{\Q_2}\Ga\rtimes V$ by Proposition \ref{prop:Bernoulli}. 
We will build specific coefficients for $G_\cC$ using Jones' representations arising from Proposition \ref{prop:functor}.

\subsection{Constructions of matrix coefficients}\label{sec:coef}
From any coefficient of $\Ga$ and coefficient $\phi_\al$ of $V$ (as constructed in Section \ref{sec:coefV}) we build a coefficient of the larger group $G_\cC\simeq \oplus_{\Q_2}\Ga\rtimes V$.

{\bf Positive definite maps on the group $\Ga$.}
Let $\phi_\Ga:\Ga\to \C$ be a positive definite function on $\Ga.$
There exists a unitary representation $(\kappa_0,\fK_0)$ and a unit vector $\xi\in\fK_0$ such that 
$$\phi_\Ga(g) = \langle \xi , \kappa_0(g)\xi\rangle \text{ for any } g\in \Ga.$$
For technical purpose we consider the infinite tensor product of the representation $\kappa_0$. 
In order to take an infinite tensor product we must first add a vector on which the group acts trivially.
Define $\fK:=\fK_0\oplus \C\Omega$ where $\Omega$ is a unit vector and extend the unitary representation $\kappa_0$ as follows:
$$\kappa(g)(\eta\oplus \mu \Omega) = (\kappa_0(g)\eta)\oplus \mu \Omega \text{ for any } g\in \Ga, \eta\in \fK_0, \mu\in \C.$$
Hence, $\kappa$ is the direct sum of $\kappa_0$ and the trivial representation $1_\Ga.$
Let $\fK^\infty$ be the infinite tensor product $\ot_{k\geq 1}(\fK,\Omega)$ with base vector $\Omega.$
In other words $\fK^\infty$ is the completion of the directed system of Hilbert spaces $(\fK^{\ot n}, n\geq 1)$ with inclusion maps 
$$\iota_n^{n+p}:\fK^{\ot n} \to \fK^{\ot n+p} , \eta\mapsto \eta\ot \Omega^{\ot p} \text{ for } n,p\geq 1.$$
For any $g\in \Ga$ we define the following map:
$$\kappa^\infty(g)(\ot_{k\geq 1} \eta_k) = \ot_{k\geq 1} \kappa(g)\eta_k$$
for an elementary tensor $\ot_{k\geq 1} \eta_k$ such that $\eta_k=\Omega$ for $k$ large enough.
This formula defines for any $n$ a unitary representation of $\Ga$ on $\fK^{\ot n}$. This family of representations is compatible with the directed system of Hilbert spaces and thus defines a unitary representation $$\kappa^\infty:\Ga\to \cU(\fK^\infty).$$

{\bf Isometries for the Thompson group $V$.}
Consider $0\leq \al\leq 1$ and the map $R_\al:\fH\to\fH\ot\fH$ defines in Section \ref{sec:coefV}.
Hence, $\fH=\ell^2(M)$ where $M$ is the free monoid in four generators $a,b,c,d.$
Moreover, recall that we write $\beta$ for $\sqrt{\al^2-1}$ and we have
\begin{align*}
R_\al:&\fH\to \fH\ot \fH\\
&\de_e \mapsto \al \de_{e,e} + \beta \de_{c,d}\\
& \de_x\mapsto \al\de_{xa,xb} + \beta\de_{xc,xd} \text{ for all } x\in M, x\neq e.
\end{align*}

{\bf Mixing representations of $\Ga$ with isometries.}
We can now build a monoidal functor from $\cC$ to $\Hilb$ and a matrix coefficient for its group of fractions $G_\cC.$
Define the Hilbert space 
$$\fL:=\fK^\infty\ot \ell^2(M)$$ and the map:
$$R=R_{\phi_\Ga,\al}:\fL\to\fL\ot \fL$$ 
as follows:
\begin{align*}
R(\eta\ot\de_e) & = \al (\eta\ot\de_e)\ot(\xi\ot\de_{e}) + \beta (\eta\ot\de_c)\ot(\xi\ot\de_{d})\\
R(\eta\ot\de_x) & = \al (\eta\ot\de_{xa})\ot ( \xi^{\ot |x|+1}\ot\de_{xb}) + \beta (\eta\ot\de_{xc})\ot(\xi^{\ot |x|+1}\ot\de_{xd}) \text{ if } x\neq e.
\end{align*}
Note that up to flipping tensors we have the formula 
$$R(\eta\ot\de_x) = (\eta\ot\xi^{\ot |x|+1}) \ot R_\al(\de_x) \text{ for } x\in M, \eta\in\fK^\infty.$$
Observe that in the formula we have $\xi$ elevated to certain tensor powers. This will permit to have matrix coefficients tending quickly to $0$ at infinity. This is the reason why we consider $\fK^\infty$ rather than $\fK_0$.
Define the unitary representation $$\rho:=\kappa^\infty\ot 1:\Ga\to\cU(\fL)$$ such that 
$$\rho(g)(\eta\ot \zeta) = \kappa^\infty(g)(\eta)\ot \zeta$$
for any $g\in \Ga, \eta\in\fK^\infty, \zeta\in \ell^2(M).$

The following proposition is straightforward:

\begin{proposition}
The pair $(\rho,R)$ verifies the assumptions of Proposition \ref{prop:functor}.
Hence, there exists a unique monoidal functor $\Psi=\Psi_{\phi_\Ga,\al}:\cC_\Xi\to\Hilb$ satisfying that 
$$\Psi(1)=\fL, \Psi(Y)= R_{\phi_\Ga,\al} \text{ and } \Psi(g) = \rho(g) \text{ for any } g\in\Ga.$$
\end{proposition}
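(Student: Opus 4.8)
The plan is to verify the three conditions of Proposition \ref{prop:functor} for the pair $(\rho,R)$ and then invoke that proposition to produce $\Psi$. Conditions (1) and (2) are formal and do not use the standing assumption $S(g)=(g,e)$, whereas condition (3) is exactly where that assumption does the work.

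For condition (1), recall that $\rho=\kappa^\infty\ot 1$ with $1=\id_{\ell^2(M)}$, and that $\kappa^\infty$ has already been shown to be a unitary representation of $\Ga$ on $\fK^\infty$. Tensoring a unitary representation with the identity of a fixed Hilbert space again yields a unitary representation, so $\rho(g)=\kappa^\infty(g)\ot\id$ is unitary for every $g$ and $g\mapsto\rho(g)$ is multiplicative; this is immediate.

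For condition (2), I would use the flipped expression $R(\eta\ot\de_x)=(\eta\ot\xi^{\ot|x|+1})\ot R_\al(\de_x)$ and compute inner products on the total family $\{\eta\ot\de_x : \eta\in\fK^\infty,\ x\in M\}$, which spans a dense subspace of $\fL$. Since $R_\al$ is an isometry of $\ell^2(M)$ we have $\langle R_\al\de_x, R_\al\de_{x'}\rangle=\langle\de_x,\de_{x'}\rangle$, and since $\xi$ and $\Omega$ are unit vectors each $\xi^{\ot|x|+1}$ is a unit vector. Thus $\langle R(\eta\ot\de_x),R(\eta'\ot\de_{x'})\rangle$ splits as a product of a $\fK^\infty\ot\fK^\infty$-factor and an $\ell^2(M)\ot\ell^2(M)$-factor and reduces to $\langle\eta,\eta'\rangle\,\langle\de_x,\de_{x'}\rangle=\langle\eta\ot\de_x,\eta'\ot\de_{x'}\rangle$. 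By sesquilinearity and continuity this identity extends to all of $\fL$, so $R$ preserves the inner product and is an isometry.

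For condition (3), the assumption $S(g)=(g,e)$ gives $(\rho\ot\rho)(S(g))=\rho(g)\ot\id$, so the relation to prove is $R\circ\rho(g)=(\rho(g)\ot\id)\circ R$, which I would check on each $\eta\ot\de_x$. On the left, $\rho(g)$ replaces $\eta$ by $\kappa^\infty(g)\eta$ in the single $\fK^\infty$-leg and then $R$ produces $\al\,(\kappa^\infty(g)\eta\ot\de_{xa_j})\ot(\xi^{\ot|x|+1}\ot\de_{xb_j})+\beta\,(\kappa^\infty(g)\eta\ot\de_{xa_{j+1}})\ot(\xi^{\ot|x|+1}\ot\de_{xb_{j+1}})$. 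On the right, $\rho(g)\ot\id$ acts only on the first output $\fL$-factor, again turning $\eta$ into $\kappa^\infty(g)\eta$ while leaving the second factor (which already carries the fixed vector $\xi^{\ot|x|+1}$, independent of $\eta$) untouched; the two expressions agree. The conceptual point, and the only place the hypothesis enters, is that $R$ transmits the input $\fK^\infty$-vector unchanged into its first output leg and installs in the second leg a vector independent of it, which matches $S(g)$ carrying $g$ in the first coordinate (where $R$ is transparent on $\fK^\infty$) and $e$ in the second (where $\rho(e)=\id$ fixes the installed vector). Once (1)--(3) hold, Proposition \ref{prop:functor} yields the unique monoidal functor $\Psi$ with $\Psi(1)=\fL$, $\Psi(Y)=R$, and $\Psi(g)=\rho(g)$. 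I expect no genuine difficulty here---consistent with the author's calling it straightforward---the only care needed being the bookkeeping of the four tensor legs of $\fL\ot\fL$ and the word-length-dependent reindexing $\eta\mapsto\xi^{\ot|x|+1}$.
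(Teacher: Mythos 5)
Your proof is correct and is exactly the intended argument: the paper offers no proof at all (it simply labels the proposition ``straightforward''), and your verification of conditions (1)--(3) of Proposition \ref{prop:functor} --- unitarity of $\rho=\kappa^\infty\ot 1$, isometry of $R$ via the flipped formula $R(\eta\ot\de_x)=(\eta\ot\xi^{\ot|x|+1})\ot R_\al(\de_x)$, and the intertwining relation $R\circ\rho(g)=(\rho(g)\ot\id)\circ R$ coming from $S(g)=(g,e)$ --- supplies precisely the omitted details. Nothing is missing.
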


Let us apply the Jones construction to the functor $\Psi=\Psi_{\phi_\Ga,\al}$ of the proposition.
We obtain a Hilbert space $\scrL_{\phi_\Ga,\al}$ and a unitary representation of the group of fractions of $\cC=\cC_\Xi$ that is:
$\pi_{\phi_\Ga,\al}:G_\cC\to\cU(\scrL_{\phi_\Ga,\al}).$
We now build a coefficient for $G_\cC.$
Consider the unit vector $\xi\ot\de_e\in\fL$ view as a vector of the larger Hilbert space $\scrL=\scrL_{\phi_\Ga,\al}$ and set
$$\varphi_{\phi_\Ga,\al}:G_\cC\to\C, v_g\mapsto \langle \pi_{\phi_\Ga,\al}(v_g)\xi\ot\de_e,\xi\ot\de_e\rangle.$$

\begin{lemma}\label{lem:pj}
Let $t$ be a tree and $\tau$ a state on $t$. 
Decompose $t$ as $f_\tau\circ z_\tau$ (see Notation \ref{not:ztau}).
Consider the geodesic path in $f_\tau$ starting at a root and ending at the $j$-th leaf and its subpath with same start but ending at the last right-edge of the path. 
If this subpath is empty (has length zero), we set $L_j(\tau,t)=L_j(\tau)=1$.
Otherwise, we set $L_j(\tau,t)=L_j(\tau)$ the length of this subpath. 
We have that
$$\Psi(t)(\xi\ot \de_e) = \sum_{\tau\in \St(t)} \al_\tau \xi^{\ot L(\tau)} \ot \de_{W(t,\tau)}$$
(up to the identification $\fL^{\ot n} \simeq (\fK^\infty)^{\ot n} \ot \ell^2(M^n)$)
where $$\xi^{\ot L(\tau)} : = \xi^{\ot L_1(\tau)}\ot\cdots\ot\xi^{\ot L_n(\tau)} \in (\fK^\infty)^{\ot n}$$
and where $W(t,\tau)$ is the list of words in the free monoid $M$ defined in Section \ref{sec:state}.
\end{lemma}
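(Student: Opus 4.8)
The plan is to argue by induction on the number of leaves of $t$, exploiting that $\Psi$ is monoidal so that $\Psi(t)$ is computed by peeling off one elementary forest at a time. I would write $t=f_{i,n}\circ s$ where $s$ is a tree with $n$ leaves and $f_{i,n}=I^{\ot i-1}\ot Y\ot I^{n-i}$ splits the $i$th leaf of $s$, so that $\Psi(t)=(\id^{\ot i-1}\ot R\ot \id^{n-i})\circ \Psi(s)$ with $R=R_{\phi_\Ga,\al}$. Every nontrivial tree arises this way (merge a cherry), and the base case $t=I$ is immediate: $\St(I)$ is a single empty state, $\Psi(I)=\id$, $W(I,\tau)=(e)$, $L_1=1$ and $\al_\tau=1$, giving $\Psi(I)(\xi\ot\de_e)=\xi\ot\de_e=\xi^{\ot 1}\ot\de_{(e)}$.

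The key simplification is the flip formula $R(\eta\ot\de_x)=(\eta\ot\xi^{\ot|x|+1})\ot R_\al(\de_x)$ recorded above, which shows that the $\ell^2(M)$-component of $R$ is exactly the operator $R_\al$ of Section \ref{sec:state}. Consequently the scalars $\al_\tau$ and the word lists $W(t,\tau)$ produced here are literally those of the expansion $\Phi(t)\de_e=\sum_{\tau\in\St(t)}\al_\tau\de_{W(t,\tau)}$ already established for the functor $\Phi=\Phi_\al$: since $\St(t)=\St(s)\times\{0,1\}$ (the extra bit being the value of $\tau$ at the new vertex $\nu$ created by $f_{i,n}$), the inductive hypothesis for $s$ together with the decomposition $R=R(0)+R(1)$ reproduces these two pieces verbatim. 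Thus the only genuinely new content is to track the $(\fK^\infty)^{\ot n}$-labels and to check they equal $\xi^{\ot L(\tau)}$.

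For the $\fK^\infty$-bookkeeping I would read the flip formula directly: applying $\id^{\ot i-1}\ot R\ot \id^{n-i}$ to the inductive term $\al_{\tau'}\,\xi^{\ot L(\tau')}\ot\de_{W(s,\tau')}$ leaves every factor untouched except the $i$th, which is replaced by a left output carrying the inherited label $\xi^{\ot L_i(\tau')}$ and a right output carrying $\xi^{\ot|W(s,\tau')_i|+1}$. It then remains to match these two prescriptions against the closed form of $L_j$. For the new right leaf the path to it terminates in a right edge, so the last right edge is the final one and the relevant subpath of $f_\tau$ is the whole root-to-leaf path, whose length is precisely the number of nontrivial letters of its word, namely $|W(s,\tau')_i|+1$; for the new left leaf the terminal edge is a left edge, so the last right edge, and hence $L_i$, is unchanged from $s$, matching the inherited label. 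One verifies these statements are stable under the two cases $\tau(\nu)=0$ and $\tau(\nu)=1$ by locating $\nu$ relative to $z_\tau$: when the word at leaf $i$ of $s$ is nontrivial, or when $\tau(\nu)=1$, the new vertex lies in $f_\tau$; when that word is trivial and $\tau(\nu)=0$ the whole path stays inside $z_\tau$, the $f_\tau$-subpath is empty, and $L_{i+1}=1=|e|+1$, again consistent.

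The main obstacle is exactly this last reconciliation: the definition of $L_j$ through \emph{the subpath of $f_\tau$ ending at its last right edge} is a closed-form description that must be matched with the recursive rule \emph{left child inherits its parent's label, right child is reset to} $\xi^{\ot|\text{parent word}|+1}$. The delicacy is that adding a split can move the boundary between $z_\tau$ and $f_\tau$, so one must check that the last-right-edge subpath in $f_\tau$ behaves correctly for every position of the new vertex and handle the degenerate empty-subpath convention $L_j=1$. Everything else---the coefficient, the word list, and the combinatorics $\St(t)=\St(s)\times\{0,1\}$---is inherited from the already completed analysis of $\Phi_\al$ in Section \ref{sec:state}.
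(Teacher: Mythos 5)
Your proof is correct and takes essentially the approach the paper intends: the paper merely asserts that the lemma ``follows from an easy induction'' and then works one example, and your argument is exactly that induction carried out in detail (on the number of leaves of $t$, peeling off elementary forests $f_{i,n}$, which is an immaterial reorganization of the paper's suggested induction on the vertices of $f_\tau$). In particular your case analysis --- left child inherits the label, right child is reset to $\xi^{\ot |x|+1}$, with the new vertex placed in $f_\tau$ or in $z_\tau$ according to whether $W(s,\tau')_i$ is nontrivial or $\tau(\nu)=1$ --- is precisely the bookkeeping the paper leaves implicit, and it correctly reconciles the recursive rule with the closed-form description of $L_j$ via the last right edge.
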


The proof follows from an easy induction on the number of vertices of $f_\tau.$
Rather than proving it we illustrate the formula on one example.
Consider the following tree:
$$\ttwofone.$$
Define the state $\tau$ such that $\tau(\nu_1)=0, \tau(\nu_2)=1, \tau(\nu_3)=0,\tau(\nu_4)=1.$
We then have that $z_\tau = Y$ and $f_\tau = t_2\ot I$ where $t_2$ is the full rooted binary tree with $4$ leaves all at distance $2$ from the root.
Since $\tau$ takes the value $0$ twice and the value $1$ twice we obtain that $\al_\tau=\al^2\beta^2.$
Following each geodesic path from the root to the $j$-th leaf and considering the state $\tau$ at each vertex we obtain that 
$$W(t,\tau)=(ca,cb,dc,dd,e).$$
The geodesic path in $f_\tau$ from a root to the first leaf is a succession of two left-edges. 
So the subpath ending with a right-edge is trivial and thus has length zero. 
We then put $L_1(\tau)=1.$
The second subpath is a left-edge followed by a right-edge and thus $L_2(\tau)=2$.
Looking at the other leaves we obtain that $L_1(\tau)=1,L_2(\tau)=2,L_3(\tau)=1,L_4(\tau)=2,L_5(\tau)=1$. 
Applying the formula of the proposition we get that the $\tau$-component of $\Phi(t)(\xi\ot\de_e)$ is equal to 
$$\al^2\beta^2 (\xi \ot \de_{ca}) \ot (\xi\ot\xi\ot \de_{cb}) \ot (\xi\ot\de_{dc}) \ot (\xi\ot\xi\ot \de_{dd})\ot (\xi\ot\de_e).$$
Another way to compute $L_j(\tau)$ is to look at the longest subword of $W(t,\tau)_j$ starting at the first letter and ending at the last $b$ or $d$-letter. If this words is trivial (there are no $b$ or $d$-letter) we put $L_j(\tau)=1$. Otherwise, $L_j(\tau)$ is the length of this word. 

\subsection{Matrix coefficients vanishing at infinity and the Haagerup property}

The next proposition proves that a large class of matrix coefficients of $G_\cC$ vanish at infinity.
This is the key technical result for proving that wreath products have the Haagerup property.

\begin{proposition}\label{prop:Czero}
Consider a discrete group $\Ga$ and a positive definite map $\phi_\Ga:\Ga\to\C$ satisfying that there exists $0\leq c<1$ such that $|\phi_\Ga(g)|\leq c$ for any $g\neq e$ and that vanishes at infinity.
If $0<\al<1$ and $\varphi=\varphi_{\phi_\Ga,\al}$ is the coefficient built in Section \ref{sec:coef}, then it vanishes at infinity.
\end{proposition}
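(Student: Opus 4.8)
The plan is to reduce the statement to an explicit formula for $\varphi$ in the spirit of Section \ref{sec:VH}, and then to isolate three independent sources of decay. First I would write $v_g=\dfrac{g\sigma t}{s}$ with $t,s$ trees having $n$ leaves and expand both $\Psi(t)(\xi\ot\de_e)$ and $\Psi(s)(\xi\ot\de_e)$ by Lemma \ref{lem:pj}. Under the identification $\fL^{\ot n}\simeq(\fK^\infty)^{\ot n}\ot\ell^2(M^n)$ the operator $\rho^{\ot n}(g)$ acts only on the $(\fK^\infty)^{\ot n}$-leg (through $\kappa^\infty$) while $\Tens(\sigma)$ only permutes legs, so the relevant inner product factorises as a product of an $\ell^2(M^n)$-part and a $(\fK^\infty)^{\ot n}$-part. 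The $\ell^2(M^n)$-part is exactly the one treated in Section \ref{sec:VH}: by Proposition \ref{prop:sim} it survives only for the matching states $\tau\in\St(t)_+$ (those admitting $\tau'$ with $\sigma W(t,\tau)=W(s,\tau')$), for which $\tau'$ is unique and $\al_\tau=\al_{\tau'}$. On the $(\fK^\infty)^{\ot n}$-part, $\kappa(g)\Omega=\Omega$ and $\langle\xi,\kappa(g)\xi\rangle=\phi_\Ga(g)$ give $\langle\xi^{\ot\ell},\kappa^\infty(g)\xi^{\ot\ell}\rangle=\phi_\Ga(g)^{\ell}$. Collecting terms yields the master formula
\begin{equation*}
\varphi(v_g)=\sum_{\tau\in\St(t)_+}\al_\tau^2\prod_{j=1}^{n}\phi_\Ga(g_j)^{L_j(\tau)},
\end{equation*}
where $L_j(\tau)\ge 1$ is the integer of Lemma \ref{lem:pj} and $g_j$ is the label of the $j$-th leaf of $t$ (a permutation of the leaves intervenes but is irrelevant for the estimates below).

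Two easy bounds come at once. Since $|\phi_\Ga|\le 1$, dropping the label factors gives $|\varphi(v_g)|\le\sum_{\tau\in\St(t)_+}\al_\tau^2=\phi_\al(v)$, where $v\in V$ is the image of $v_g$; so the estimate of Section \ref{sec:VH} forces $v$ into the finite set $\{\phi_\al\ge\ep\}$ as soon as $|\varphi(v_g)|\ge\ep$. Using in addition $L_j\ge 1$ we get $\prod_j|\phi_\Ga(g_j)|^{L_j(\tau)}\le\prod_{d\in\mathrm{supp}(g)}|\phi_\Ga(g(d))|$ independently of $\tau$, hence $|\varphi(v_g)|\le\prod_{d\in\mathrm{supp}(g)}|\phi_\Ga(g(d))|$; combined with $|\phi_\Ga(h)|\le c<1$ for $h\neq e$ and the vanishing of $\phi_\Ga$ at infinity, this controls the \emph{values} taken by $g$ on its support.

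The genuine difficulty is that these two bounds do not decay when $v$ and the values of $g$ stay bounded while the support of $g$ wanders to ever finer dyadic points; this is precisely the phenomenon described in the introduction through $\dfrac{g_nt_n}{t_n}$. Here I would exploit the exact shape of $L_j$ produced by $\Psi$. A support point $d=p/2^{q}$ with $p$ odd is carried by a leaf whose geodesic in $t$ has its last right edge at depth $q$ (the extra subdivisions in any refinement are left edges), so $L_j(\tau)=\max(q-r(\tau),1)$, where $r(\tau)$ is the length of the initial run of $0$'s of $\tau$ along that geodesic. Keeping only the label $x_\ast\neq e$ at the \emph{deepest} support point $d_\ast=p_\ast/2^{q_\ast}$ and enlarging the sum to all of $\St(t)$, the weights $\al_\tau^2$ marginalise to the product measure assigning each path vertex the value $0$ with probability $\al^2$, so that
\begin{equation*}
|\varphi(v_g)|\le\sum_{\tau\in\St(t)}\al_\tau^2\,|\phi_\Ga(x_\ast)|^{L_j(\tau)}\le C\,q_\ast\,\max(\al^2,|\phi_\Ga(x_\ast)|)^{q_\ast},
\end{equation*}
which tends to $0$ as $q_\ast\to\infty$ because $\al^2<1$ and $|\phi_\Ga(x_\ast)|\le c<1$. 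I expect this to be the main obstacle: one must match $x_\ast$ with the correct leaf of $t$, verify that its last right edge really sits at depth $q_\ast$, and perform the geometric average. This is exactly the step where the decoration $\xi^{\ot|x|+1}$ of $\Psi$, which makes the exponents $L_j$ grow with depth, is indispensable — a naive functor would leave these exponents bounded and produce no decay.

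Finally I would assemble the three estimates. Fix $\ep>0$. If $|\varphi(v_g)|\ge\ep$ then $v$ lies in the finite set $\{\phi_\al\ge\ep\}\subset V$, every support point of $g$ has depth at most some $Q_0$ (so $\mathrm{supp}(g)$ lies in the fixed finite set of dyadic points of depth $\le Q_0$), and every value $g(d)$ lies in the finite set $\{h:|\phi_\Ga(h)|\ge\ep\}\subset\Ga$. The set of $v_g\in G_\cC\simeq\oplus_{\Q_2}\Ga\rtimes V$ meeting all three conditions is finite, so $\varphi$ vanishes at infinity.
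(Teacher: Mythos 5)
Your proposal is correct and follows essentially the same route as the paper: your master formula (obtained from Lemma \ref{lem:pj} and the uniqueness statement in Proposition \ref{prop:sim}), your bound $|\varphi(v_g)|\leq\phi_\al(v)$ controlling the $V$-part, your product bound $\prod_{d\in\mathrm{supp}(g)}|\phi_\Ga(g(d))|$ controlling the labels, and your per-leaf estimate $C\,q\,\max(\al^2,|\phi_\Ga(x)|)^{q}$ are precisely the paper's two preliminary observations and its key Claim, proved by the same marginalisation of the weights $\al_\tau^2$ along the path to the last right edge. The only difference is cosmetic and lies in the final bookkeeping: you count directly in the wreath-product coordinates (support depth at most $Q_0$, values in a finite set, $V$-part in a finite set), whereas the paper pushes the nontrivial labels down the terminal left edges using $(x,e)Y=Yx$ and counts normalized fractions $\dfrac{g'\sigma' t_U}{t''}$ over the full tree $t_U$ with $U=\max(N,Q)$ --- the same finiteness argument in different clothing.
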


\begin{proof}
Consider trees $t,t'$ with $n$ leaves, a permutation $\sigma\in S_n$ and $g=(g_1,\cdots,g_n)\in \Ga^n$.
Write $v=\dfrac{\sigma t}{t'}\in V$ and $v_g=\dfrac{g\sigma t}{t'}\in G_\cC$.
Recall that any element of $G_\cC$ can be written in that way.
Fix $0<\varep<1$ and assume that $|\varphi(v_g)|\geq \varep.$
Let us show that there are only finitely many such $v_g$.

By definition of the coefficients we have that $|\varphi(v_g)|\leq \prod_{j=1}^n |\phi_\Ga(g_j)|.$
Since the map $\phi_\Ga:\Ga\to\C$ vanishes at infinity and $|\varphi(v_g)|\geq \varep$ we obtain that there exists a finite subset $Z\subset \Ga$ such that $g\in Z^n.$

Observe that $|\varphi(v_g)|\leq |\phi_\al(\dfrac{\sigma t}{t'})|$ where $\phi_\al:V\to\C$ is the coefficient built in Section \ref{sec:coefV}.
We proved in Section \ref{sec:VH} that $\phi_\al$ vanishes at infinity. 
Therefore, we may write $\dfrac{\sigma t}{t'}$ as a fraction with few leaves.
Hence, there exists a fixed $N\geq 1$ depending solely on $\varep$ such that 
\begin{equation}\label{eq:frac}\dfrac{\sigma t}{t'}=\dfrac{\theta t_N}{s}\end{equation} 
for some tree $s$ and permutation $\theta$ and where $t_N$ denotes the full rooted binary tree with $2^N$ leaves all at distance $N$ from the root

The next claim will show that the fraction $\dfrac{g\sigma t}{t'}$ can be reduced as a fraction $\dfrac{g'\theta t_{N'}}{s}$ for some $N'\geq 1$ that only depends on $N$ (and thus only depends on $\varep$).
To do this we need to show that if $g_j$ is nontrivial, then the geodesic path inside $t$ ending at the $j$-th leaf is mainly a long path with only left-edges. 
Define $P_j$ to be the geodesic path from the root of the tree $t$ to the $j$-th leaf of $t$ and write $P_j^R$ its subpath starting at the root and ending at the last right-edge of $P_j$.

{\bf Claim:} We have the inequality 
\begin{equation}\label{eq:pathPj}|\varphi(v_g)| \leq   (|P_j^R|+1)\max(\al^2,|\phi(g_j)|)^{|P_j^R|}\end{equation}
for any $1\leq j\leq n.$

{\bf Proof of the claim:}
Lemma \ref{lem:pj} states that 
$$\Phi(t)\xi\ot \de_e = \sum_{\tau\in \St(t)} \al_\tau \xi^{\ot L(\tau)}\ot  \de_{W(t,\tau)}.$$
Therefore, 
\begin{align*}
\varphi(v_g) & = \langle\Phi(t')\xi\ot\de_e , \Phi(g\sigma)\Phi(t)\xi\ot\de_e\rangle\\
& = \sum_{\tau\in \St(t)}\sum_{\tau'\in \St(t')} \langle \al_{\tau'} \xi^{\ot L(\tau')}\ot  \de_{W(t',\tau')} , \Phi(g\sigma)\al_\tau \xi^{\ot L(\tau)}\ot  \de_{W(t,\tau)}\rangle\\
& = \sum_{\tau\in \St(t)}\sum_{\tau'\in \St(t')} \langle \al_{\tau'} \xi^{\ot L(\tau')}\ot  \de_{W(t',\tau')} , \al_\tau (\kappa(g)\xi)^{\ot \sigma L(\tau)}\ot  \de_{\sigma W(t,\tau)}\rangle\\
& = \sum_{\tau\in \St(t)}\sum_{\tau'\in \St(t')} \al_{\tau'}\al_{\tau} \prod_{i=1}^n \phi_\Ga(g_i)^{L_{i}(\tau)} \langle  \de_{W(t',\tau')}, \de_{\sigma W(t,\tau)}\rangle.
\end{align*}
By Propostion \ref{prop:sim}, we have that given a state $\tau\in \St(t)$ there are at most one $\tau'\in \St(t')$ such that $W(t',\tau') = \sigma W(t,\tau)$ and in that case $\al_\tau=\al_{\tau'}.$ 
This implies that 
$$|\varphi(v_g)|  \leq \sum_{\tau\in \St(t)} \al_{\tau}^2 \prod_{i=1}^n |\phi_\Ga(g_i)|^{ L_i(\tau) }.$$

Fix $1\leq j\leq n$ and consider the set of vertices of the path $P_j^R$ that we denote from bottom to top by $\nu_1,\nu_2,\cdots,\nu_q$.
Our convention is that the last vertex $\nu_q$ is the source of the last edge of $P_j^R$ and thus $|P_j^R|= q.$
Define $$S_k = \begin{cases} \{\tau\in\St(t): \tau(\nu_1)=1\} \text{ if } k=0;\\
\left\{ \tau\in\St(t): \tau(\nu_1)=\cdots=\tau(\nu_k)=0, \tau(\nu_{k+1})=1\right\} \text{ if } 1\leq k \leq q-1;\\
\{\tau\in\St(t): \tau(\nu_1)=\cdots=\tau(\nu_q)=0\} \text{ if } k=q.
\end{cases}$$
Observe that 
$$
\sum_{\tau\in S_k} \al_\tau^2 =
\begin{cases} 
\al^{2k}\beta^2 \text{ if } 0\leq k\leq q-1 \\
\al^{2q} \text{ if } k=q
\end{cases}.
$$
Moreover, if $\tau\in S_k$ for $0\leq k\leq q$, then $L_j(\tau)=q-k.$
Therefore,
\begin{align*}
|\varphi(v_g)| & \leq \sum_{\tau\in \St(t)} \al_{\tau}^2 \prod_{i=1}^n |\phi_\Ga(g_i)|^{ L_i(\tau) }\\
& \leq \sum_{\tau\in \St(t)} \al_{\tau}^2 |\phi_\Ga(g_j)|^{ L_j(\tau) }
 = \sum_{k=0}^{q} \sum_{\tau \in S_k} \al_{\tau}^2 |\phi_\Ga(g_j)|^{ L_j(\tau) }\\
& = \sum_{k=0}^{q-1} \al^{2k}\beta^2 |\phi_\Ga(g_j)|^{q-k } + \al^{2q} |\phi_\Ga(g_j)|\\
&\leq \sum_{k=0}^{q-1} \max(\al^2,|\phi_\Ga(g_j)|)^{q}  + \max(\al^2,|\phi_\Ga(g_j)|)^{q+1}\\
& \leq (q+1)\max(\al^2,|\phi_\Ga(g_j)|)^{q}.
\end{align*}
This proves the claim.

We now explain how to reduce our fraction $\dfrac{g\sigma t}{s}$.

{\bf Claim:} There exists $Q\geq 1$ such that $|P_j^R| \leq Q$ for any $j\in J$ where $J:=\{j:\ g_j\neq e\}$ is the support of $g$.

If $J$ is empty, then we can take $Q=1$.
Assume $J$ is nonempty and take $j\in J$.
By assumption we have that $|\phi(g_j)|<c$ for a fixed constant $0<c<1.$
Moreover, $0<\al<1.$
This implies that the quantity $(P+1)\max(\al^2, |\phi(g_j)|)^P$ tends to zero in $P$.
Therefore, by the preceding claim we deduce that there exists $Q\geq 1$ such that $|P_j^R| \leq Q$ for any $j\in J.$
This proves the claim.

From the claim we deduce that the geodesic path $P_j$ from the root of $t$ to its $j$-th leaf with $j\in J$ is the concatenation of a first path $P_j^R$ of length less than $Q$ ending with a right-edge and a second path which consists on a succession of left-edges.
Using the rules of composition of morphisms in the category $\cC_\Xi$ we can write the composition $g\circ\sigma \circ t$ in a different fashion as follows.
First observe that $g\circ \sigma = \sigma \circ g_\sigma$ where $g_\sigma\in \Ga^n$ whose $i$-th component is $g_{\sigma(i)}.$
Second we make the group elements go down in the tree using the relation $(x,e)Y = Y x$ for $x\in \Ga.$
We apply this relation to any nontrivial group element $g_j, j\in J$ along the second part of the path $P_j$ that is a succession of left-edges.
We obtain that $g\circ \sigma \circ t = f \circ \sigma' \circ g' \circ t'$ for some $f,\sigma',g',t'$ satisfying that $\sigma t = f\sigma' t'$ and such that $g'\in Z^{n'}$ for some $n'\leq n.$
We can choose $t'$ for which every leaf is at most at distance $Q$ from the root and thus can be seen as rooted subtree of the complete binary tree $t_Q$ that has $2^Q$ leaves all of them at distance $Q$ from the root.
We obtain that $$v_g = \dfrac{ f\sigma' g' t_Q}{f' t''}.$$

Using \eqref{eq:frac}, we obtain that $v_g$ can be reduced as a fraction $\dfrac{g'\sigma' t_U}{t''}$ where $U=\max(N,Q)$ and $g'\in Z^{n'}$ where $n'=2^U.$
Since $Z$ is finite and $U$ is fixed (and only depends on $\varep$) there are only finitely many such fractions implying that $\varphi$ vanishes at infinity.
\end{proof}

We are now able to prove one of the main theorems of this article.

\begin{theorem}\label{theo:Htwo}
If $\Ga$ is a discrete group with the Haagerup property, then so does the wreath product $\oplus_{\Q_2} \Ga\rtimes V$.
\end{theorem}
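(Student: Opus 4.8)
The plan is to feed Proposition \ref{prop:Czero} an approximating net coming from the Haagerup property of $\Ga$. By Proposition \ref{prop:Bernoulli} we have $G_\cC\simeq\oplus_{\Q_2}\Ga\rtimes V$, so it suffices to produce a net of normalized positive definite functions on $G_\cC$ that vanish at infinity and converge pointwise to $1$. The natural candidates are the coefficients $\varphi_{\phi_\Ga,\al}$ built in Section \ref{sec:coef}: each is a diagonal matrix coefficient of the unitary representation $\pi_{\phi_\Ga,\al}$ at the unit vector $\xi\ot\de_e$, hence normalized and positive definite, and Proposition \ref{prop:Czero} already guarantees that $\varphi_{\phi_\Ga,\al}$ vanishes at infinity whenever $\phi_\Ga$ does and is bounded away from $1$ off the identity.

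Since $\Ga$ has the Haagerup property, fix a net $(\psi_i)$ of normalized positive definite functions on $\Ga$ vanishing at infinity and converging pointwise to $1$. The only hypothesis of Proposition \ref{prop:Czero} not automatically available is the uniform bound $|\phi_\Ga(g)|\le c<1$ for $g\neq e$, which may fail at torsion elements. I would remove this obstruction by a multiplication trick. For $s\in(0,1)$ let $\omega_s\colon\Ga\to\C$ be the function equal to $1$ at $e$ and to $1-s$ on $\Ga\setminus\{e\}$; it is normalized and positive definite, being the convex combination $(1-s)\cdot 1+s\cdot u$ of the constant function $1$ and the indicator $u$ of $\{e\}$ (a coefficient of the left regular representation of $\Ga$). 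Crucially $\omega_s$ need not vanish at infinity. Setting $\phi_{i,s}:=\psi_i\cdot\omega_s$ yields normalized positive definite functions that still vanish at infinity (a vanishing function times a bounded one) and converge pointwise to $1$ along the product net, and that now satisfy $|\phi_{i,s}(g)|\le 1-s<1$ for every $g\neq e$. Hence Proposition \ref{prop:Czero} applies to each $\phi_{i,s}$ and each $0<\al<1$.

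It remains to verify pointwise convergence to $1$, which is where the genuine computation lies. Writing $v_g=\dfrac{g\sigma t}{t'}$ with $t,t'$ having $n$ leaves and $g=(g_1,\dots,g_n)$, the expansion in the proof of Proposition \ref{prop:Czero} together with Lemma \ref{lem:pj} gives
$$\varphi_{\phi,\al}(v_g)=\sum_{\tau\in\St(t)}\sum_{\tau'\in\St(t')}\al_{\tau'}\al_\tau\prod_{i=1}^n\phi(g_i)^{L_i(\tau)}\langle\de_{W(t',\tau')},\de_{\sigma W(t,\tau)}\rangle.$$
Because $\al_\tau=\al^{|\tau^{-1}(0)|}\beta^{|\tau^{-1}(1)|}$ with $\beta=\sqrt{1-\al^2}\to 0$, every summand in which $\tau$ or $\tau'$ is not identically $0$ tends to $0$ as $\al\to 1$, and there are at most $2^n\times 2^n$ summands. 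The surviving term $\tau=\tau'\equiv\mathbf 0$ has $W(t,\tau)=W(t',\tau')=(e,\dots,e)$, all $L_i(\tau)=1$, and coefficient $\al^{2n-2}\prod_{i=1}^n\phi(g_i)$, so $\varphi_{\phi,\al}(v_g)\to\prod_{i=1}^n\phi(g_i)$ as $\al\to 1$. This convergence is uniform in $\phi$, the error being bounded by $|\al^{2n-2}-1|+\sum_{(\tau,\tau')\neq(\mathbf 0,\mathbf 0)}\al_\tau\al_{\tau'}$, which depends only on $t,t',\al$. As $\prod_{i=1}^n\phi_{i,s}(g_i)$ is a finite product of factors each converging to $1$ along the net, a product-net argument gives $\varphi_{\phi_{i,s},\al}(v_g)\to 1$, and the family $\{\varphi_{\phi_{i,s},\al}\}$ witnesses the Haagerup property of $G_\cC\simeq\oplus_{\Q_2}\Ga\rtimes V$.

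I expect the main obstacle to be the interplay of the two limits. On one hand the uniform bound $|\phi_\Ga(g)|\le c<1$ must be installed without destroying convergence to $1$; the $\omega_s$ trick handles this precisely because $\omega_s$ is allowed to be constant, hence non-vanishing, at infinity. On the other hand the limits $\al\to 1$ and $\phi\to 1$ must be combined into a single net, which succeeds only because the $\al\to 1$ convergence to $\prod_i\phi(g_i)$ is uniform in $\phi$; absent this uniformity one could not pass to a single approximating net.
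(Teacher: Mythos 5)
Your proposal is correct and takes essentially the same route as the paper's proof: reduction to $G_\cC$ via Proposition \ref{prop:Bernoulli}, the coefficients $\varphi_{\phi_\Ga,\al}$ of Section \ref{sec:coef} with Proposition \ref{prop:Czero} supplying vanishing at infinity, and the same damping trick to force $|\phi_\Ga|\le c<1$ off the identity (your Schur product $\psi_i\cdot\omega_s$ is exactly the paper's $\psi_\theta$, constructed there as a coefficient of $\kappa\oplus\la_\Ga$ at the vector $\cos(\theta)\xi\oplus\sin(\theta)\de_e$, with $\cos^2(\theta)=1-s$). The only difference is packaging of the last step: the paper fixes a finite set and explicit parameters ($\al=1-\varep'$ with $(1-\varep')^{2n+n^2}=1-\varep$) and asserts the quantitative bound $|\varphi(v_g)|\geq \al^{2n-2}\prod_{j}|\phi_\Ga(g_j)|^{n}\geq 1-\varep$, whereas you prove $\varphi_{\phi,\al}(v_g)\to\prod_i\phi(g_i)$ as $\al\to 1$ uniformly in $\phi$ and then combine nets --- your bookkeeping, which controls the value of $\varphi$ (not merely its modulus) and dominates the cross terms by $\sum_{(\tau,\tau')\neq(\mathbf{0},\mathbf{0})}\al_\tau\al_{\tau'}$, is if anything the more careful rendering of the same estimate.
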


\begin{proof}
Fix a discrete group $\Ga$ with the Haagerup property.
By Proposition \ref{prop:Bernoulli} the wreath product $\oplus_{\Q_2} \Ga\rtimes V$ is isomorphic to the group of fractions $G_\cC$ and thus it is sufficient to prove that this later group has the Haagerup property.
Consider a finite subset $X\subset G_\cC$ and $0<\varep<1.$
Since $X$ is finite there exists $n$ and a finite subset $Z\subset \Ga$ such that $X\subset X_n$ where $X_n$ is the set of fractions $v_g:=\dfrac{g\sigma t}{s}$ where $t,s$ are trees with $n$ leaves, $g=(g_1,\cdots,g_n)\in Z^n$ and $\sigma\in S_n.$
Fix $\varep'>0$ the unique positive number satisfying that 
$$(1-\varep')^{2n+n^2}=1-\varep.$$
Since $\Ga$ has the Haagerup property there exists a positive definite map $\phi_\Ga:\Ga\to\C$ vanishing at infinity satisfying that $|\phi_\Ga(x)|> 1-\varep'$ for any $x\in Z.$ 

Since $\Ga$ is discrete we can further assume that there exists $0<c<1$ satisfying that $|\phi_\Ga(x)|\leq c$ for any $x\in \Ga,x\neq e$.
Indeed, if $\phi_\Ga(g)=\langle \xi,\kappa(g)\xi\rangle$ for some representation $(\kappa,\fK)$ we consider $(\kappa\oplus\la_\Ga,\fK\oplus\ell^2(\Ga))$ where $\la_\Ga$ is the left-regular representation of the discrete group $\Ga$. 
Given any angle $\theta$ we set 
$$\eta:=\cos(\theta) \xi \oplus \sin(\theta)\de_e$$ and define the coefficient $$\psi_\theta(g)=\langle\eta,(\kappa\oplus\la)(g)\eta\rangle, g\in \Ga.$$
Note that $\eta$ is a unit vector and that 
$$\psi_\theta(g)=\begin{cases} \cos(\theta)^2 \phi_\Ga(g) \text{ if } g\neq e\\ 1 \text{ if } g=e\end{cases}.$$
We then replace $\phi_\Ga$ by $\psi_\theta$ for $\theta$ sufficiently small.

Consider the map $\phi_\al:V\to\C$ of Section \ref{sec:coefV} with parameter $\al=1-\varep'$ and denote by $\varphi=\varphi_{\phi_\Ga,\al}$ the associated coefficient of $G_\cC.$
By Proposition \ref{prop:Czero}, the map $\varphi$ vanishes at infinity on $G_\cC$.
Consider $v_g\in X$ and observe that 
$$|\varphi(v_g)|\geq \al^{2n-2} \prod_{j=1}^n |\phi_\Ga(g_j)|^n\geq (1-\varep')^{2n-2} (1-\varep')^{n^2}\geq (1-\varep')^{2n+n^2}=1-\varep.$$
Hence, for any finite subset $X\subset G_\cC$ and $0<\varep<1$ there exists a positive definite map $\varphi:G_\cC\to\C$ vanishing at infinity and satisfying that $|\varphi(v)|\geq 1-\varep$ for any $v\in X.$
This implies that $G_\cC$ has the Haagerup property.
\end{proof}

\subsection{Haagerup property for twisted wreath products}

In this section we fix a group $\Ga$ and an automorphism of it $\theta\in\Aut(\Ga)$.
Recall from Section \ref{sec:WP} that this defines a category $\cC=\cC_{\Ga,\theta}$ where morphisms are forests with leaves labelled by elements of $\Ga$ and by permutations satisfying the relation
$$Y\circ g = (\theta(g),e)\circ Y.$$
Moreover, the group of fractions of $\cC$ is isomorphic to the twisted wreath product $\Ga\wr^\theta_{\Q_2}V.$
By adapting the proof of Theorem \ref{theo:Htwo} we obtain the following result.

\begin{theorem}\label{theo:twisted-WP-Haagerup}
If $\Ga$ is a group with the Haagerup property and $\theta\in\Aut(\Ga)$ is an automorphism, then the twisted wreath product $\Ga\wr_{\Q_2}^\theta V$ has the Haagerup property.
\end{theorem}

\begin{proof}
Fix a group $\Ga$ with the Haagerup property and an automorphism $\theta$ of it.
Denote by $G$ the twisted wreath product $\Ga\wr_{\Q_2}^\theta V$ that we identify with the group of fractions of the category $\cC_{\Ga,\theta}.$
We mainly follow the construction explained in Section \ref{sec:coef} and keep similar notations.
We choose a positive definite function $\phi_\Ga:\Ga\to\C$ realized as $\phi_\Ga(g)=\langle \xi, \kappa_0(g)\xi\rangle$ and put $\fK=\fK_0\oplus\C\Omega$.
Consider $\fK^\infty:=\ot_{n\geq 0} (\fK,\Omega)$ and the associated representation of $\Ga$ denoted by $\kappa^\infty.$

Now, we modify the construction by considering the automorphism $\theta$.
We define $\fK_\theta:=\oplus_{n\in\Z} \fK^\infty$ the infinite direct Hilbert space sum of $\fK^\infty$ over the set $\Z$ and the representation
$$\kappa_\theta:=\oplus_{n\in\Z}( \kappa^\infty\circ\theta^{-n}).$$
Consider the operator $\shift:\fK_\theta\to\fK_\theta$ defined as 
$$\shift(\oplus_{n\in\Z} \eta_n):= \oplus_{n\in\Z} \eta_{n-1}.$$
This is a unitary satisfying
\begin{equation}\label{eq:shift}\kappa_\theta(\theta(g))\circ \shift = \shift \circ \kappa_\theta(g) \text{ for any } g\in \Ga.\end{equation}
We set $\fL:=\fK_\theta\ot\ell^2(M)$ and the unitary representation $\rho_\theta:=\kappa_\theta\ot 1$ similarly than before.
We now define our $R$-map. To do this we need to replace our favourite vector $\xi$ by one that is almost invariant by the shift operator.
Given any vector $\eta\in \fK$ and $n\geq 1$ we put:
$$\eta_n:=  \frac{1}{\sqrt{2n+1}} \oplus_{k\in\Z} \eta \ \chi_{[-n,n]}(k)\in \fK_\theta$$ 
where $\chi_{[-n,n]}$ is the characteristic function of $\{k\in\Z:\ |k|\leq n\}.$
Note that if $\eta$ is a unit vector, then $\eta_n$ is again a unit vector satisfying $\langle\shift(\eta_n),\eta_n\rangle = \frac{2n}{2n+1}.$
We will then consider vectors like $\xi_n$ and $\xi_n^{\ot |x|+1}$ in $\fK_\theta$.
The new $R$-map from $\fL$ to $\fL\ot\fL$ is the following:
\begin{align*}
R(\eta\ot\de_e) & = \al (\shift(\eta)\ot\de_e)\ot(\xi_n\ot\de_{e}) + \beta (\shift(\eta)\ot\de_c)\ot(\xi_n\ot\de_{d})\\
R(\eta\ot\de_x) & = \al (\shift(\eta)\ot\de_{xa})\ot ( \xi_n^{\ot |x|+1}\ot\de_{xb}) + \beta (\shift(\eta)\ot\de_{xc})\ot(\xi_n^{\ot |x|+1}\ot\de_{xd}),
\end{align*}
for $x\neq e$.
It is the same formula than in the untwisted case except that $\eta,\xi,\xi^{\ot |x|+1}$ are replaced by $\shift(\eta),\xi_n,\xi_n^{\ot |x| +1},$ respectively.
By reordering the tensors we obtain the following short formula:
$$R(\eta\ot \de_x) = (\shift(\eta)\ot \xi_n^{|x|+1}) \ot R_\al(\de_x).$$
One can check that $(R,\rho)$ defines a monoidal functor from $\cF$ to $\Hilb$ and thus a Jones' representation $\pi:G\to\cU(\mathscr L).$
We consider the positive definite function: $$\varphi:=\varphi_{n,\al,\phi_\Ga}(\ga):=\langle \pi(\ga)\xi_n\ot\de_e,\xi_n\ot\de_e\rangle \text{ for any } \ga\in G.$$
A similar proof can be applied by considering $\phi_\Ga$ as in the proof of Theorem \ref{theo:Htwo}, letting $\al$ tending to one and $n$ to infinity. 
We then obtain a net of positive definite functions $\varphi_{n,\al,\phi_\Ga}$ vanishing at infinity and tending to one thus proving that the group of fraction $G$ has the Haagerup property.
\end{proof}

The following proposition shows that we have many new examples of wreath products with the Haagerup property; indeed the wreath product $\Ga\wr_{\Q_2}^\theta W$ with $W$ being $F,T,$ or $V$ remembers the group $\Ga$ and the automorphism $\theta$.
It was proven in \cite[Theorem 4.12]{Brothier22} for the $V$-case. The untwisted version of it has been proven for the $F$ and $T$-cases in \cite[Theorem 4.1]{Brothier22-HPM} and can easily be extended to the twisted case. We leave the proof of this extension to the reader.

\begin{proposition}
Consider two pairs of groups with an automorphism $(\Ga,\theta)$ and $(\ti\Ga,\ti\theta)$. 
Let $G,\ti G$ be the associated twisted wreath products $\Ga\wr_{\Q_2}^\theta V$ and $\ti\Ga\wr_{\Q_2}^{\ti\theta} V$.
We have that $G\simeq \ti G$ if and only if there exists an isomorphism $\beta:\Ga\to \ti \Ga$ and $h\in \ti\Ga$ satisfying $\ti\theta = \ad(h)\circ \beta \ti\theta \beta^{-1}.$
The same result holds when $V$ is replaced by $F$ or $T$.
\end{proposition}

\section{Groupoid approach and generalisation of the main result}\label{sec:groupoidap}

In this section we adopt a groupoid approach.
We include all necessary definitions and constructions that are small modifications of the group case previously explained in the preliminary section. 
This leads to proofs of Theorem \ref{THB} and Corollary \ref{COR}. 

\subsection{Universal groupoids}\label{sec:univgroupoid}

We refer to \cite{GabrielZisman67} for the general theory on groupoids and groups of fractions. 

\begin{definition}
A small category $\cC$ admits a calculus of left-fractions if:
\begin{itemize}
\item (left-Ore's condition) For any pair of morphisms $p,q$ with same source there exists some morphisms $r,s$ satisfying $rp=sq$;
\item (Weak right-cancellative) If $pf=qf$, then there exists $g$ such that $gp=gq$.
\end{itemize}
\end{definition}

To any category $\cC$ can be associated a universal (or sometime called enveloping) groupoid $(\cG_\cC,P)$ together with a functor $P:\cC\to\cG_\cC.$
The groupoid $\cG_\cC$ has the same collection of objects than $\cC$ and morphisms are signed paths inside the category $\cC$: compositions of morphisms of $\cC$ and their formal inverse.
The next proposition shows that if $\cC$ admits a calculus of left-fractions then any morphism of $\cG_\cC$ can be written as $P(t)^{-1}P(s)$ for some morphisms of $t,s$ of $\cC$ with same target and thus justifies the terminology. 
The proof can be found in \cite[Chapter I.2]{GabrielZisman67}.

\begin{proposition}\label{prop:groupoid}
If $\cC$ admits a calculus of left-fractions, then any morphism of $\cG_\cC$ can be written as $P(t)^{-1}P(s)$ for $t,s$ morphisms of $\cC$ (having common target).
Using the fraction notation $\dfrac{t}{s}:=P(t)^{-1}P(s)$ we obtain that $\dfrac{ft}{fs}=\dfrac{t}{s}$ for any morphism $f$ of $\cC$.
Moreover, we have the following identities:
\begin{align*}
&\dfrac{t}{s}\dfrac{t'}{s'} =\dfrac{ft}{f's'} \text{ for any $f,f'$ satisfying } fs = f't' ; 
\text{ and } \left( \dfrac{t}{s} \right)^{-1} = \dfrac{s}{t} . 
\end{align*}
We say that $\cG_\cC$ is the groupoid of fractions of $\cC.$
\end{proposition}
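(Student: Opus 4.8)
The plan is to identify a normal form inside $\cG_\cC$ and then read off the four algebraic identities from it. By its very construction $\cG_\cC$ is generated, as a groupoid, by the arrows $P(f)$ and their formal inverses $P(f)^{-1}$, $f$ a morphism of $\cC$, subject only to the groupoid axioms together with the functoriality relations $P(ft)=P(f)P(t)$ and $P(\id_a)=\id_a$. Hence, to prove that \emph{every} morphism has the form $P(t)^{-1}P(s)$, it suffices to show that the collection
$$\Sigma:=\{\,P(t)^{-1}P(s)\ :\ t,s \text{ morphisms of }\cC\text{ with common target}\,\}$$
is a wide subgroupoid of $\cG_\cC$ containing every generator $P(f)$ and every $P(f)^{-1}$; since the generators generate, this forces $\Sigma$ to exhaust all morphisms, which is exactly the first assertion.

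First I would dispatch the easy closure properties. Each identity lies in $\Sigma$ via $\id_a=P(\id_a)^{-1}P(\id_a)$, and each generator via $P(f)=P(\id)^{-1}P(f)$ and $P(f)^{-1}=P(f)^{-1}P(\id)$, the relevant pairs sharing a target. Closure under inversion is immediate from $(P(t)^{-1}P(s))^{-1}=P(s)^{-1}P(t)$, which simultaneously yields the inverse formula $\left(\frac{t}{s}\right)^{-1}=\frac{s}{t}$. The reduction relation is just as quick: by functoriality $P(ft)^{-1}P(fs)=P(t)^{-1}P(f)^{-1}P(f)P(s)=P(t)^{-1}P(s)$, so $\frac{ft}{fs}=\frac{t}{s}$.

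The real content is closure under composition, and this is where the calculus of left fractions enters. Given two composable elements $P(t)^{-1}P(s)$ and $P(t')^{-1}P(s')$, the composability constraint forces the denominator $s$ of the first fraction and the numerator $t'$ of the second, which meet at the junction, to share a source; the left-Ore condition then provides morphisms $f,f'$ with $fs=f't'$. Applying $P$ gives $P(f)P(s)=P(f')P(t')$, hence $P(s)P(t')^{-1}=P(f)^{-1}P(f')$, and the word telescopes to $P(ft)^{-1}P(f's')=\frac{ft}{f's'}\in\Sigma$, establishing both closure and the stated multiplication formula.

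I expect this last step to be the main obstacle, not for the algebra but for \emph{well-definedness}: one must check that the resulting fraction is independent of the choice of $f,f'$ solving $fs=f't'$ and of the chosen representatives of the two classes. This is precisely the diagram chase already carried out in the proof of the group-of-fractions proposition above, which uses the left-Ore condition to compare two completions and the weak right-cancellation axiom to cancel the surplus; the argument transfers verbatim once one tracks the varying objects instead of fixing the common source at $e$. With that in hand, $\Sigma$ is a wide subgroupoid containing all generators, so $\Sigma$ is all of $\cG_\cC$, and the four claimed identities have been verified along the way. For the foundational facts presenting $\cG_\cC$ as the localization of $\cC$ one may appeal to \cite[Chapter I.2]{GabrielZisman67}.
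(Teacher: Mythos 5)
Your proof is correct, but it is worth noting that the paper does not actually prove this proposition at all: it defers entirely to \cite[Chapter I.2]{GabrielZisman67}, where the localization $\cC[\cC^{-1}]$ is constructed and the fraction normal form is established as part of the general calculus-of-fractions machinery. Your argument is a self-contained alternative: since $\cG_\cC$ is presented by the generators $P(f)^{\pm 1}$ subject to functoriality and the groupoid axioms, you show the set $\Sigma$ of fractions $P(t)^{-1}P(s)$ is a wide subgroupoid containing all generators, with the left-Ore condition entering exactly once, to telescope $P(s)P(t')^{-1}=P(f)^{-1}P(f')$ at the junction of two composable fractions. This buys a short, hands-on proof of precisely the identities the paper needs, without importing the full Gabriel--Zisman development. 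One correction to your final paragraph: the well-definedness issue you flag as ``the main obstacle'' is in fact vacuous in this setting. Unlike the group-of-fractions proposition earlier in the paper, where $G_\cC$ is \emph{constructed} as a quotient of pairs $(t,s)$ and the product must be checked to be independent of representatives (this is where weak cancellation is used), here $\dfrac{t}{s}$ is by definition a single, fixed morphism of the pre-existing groupoid $\cG_\cC$, and your computation shows that the fixed composite $P(t)^{-1}P(s)P(t')^{-1}P(s')$ equals $P(ft)^{-1}P(f's')$ for \emph{every} choice of $f,f'$ with $fs=f't'$; there are no representatives to compare and nothing further to verify. In particular the weak cancellation axiom is never needed for this proposition, so the appeal to the earlier diagram chase can simply be deleted rather than transferred.
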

\begin{remark}
A perfect analogy to Ore's work on embedding a semi-group into a group would be to have that the functor $P:\cC\to\cG_\cC$ is faithful and that morphisms of $\cG_\cC$ can be expressed as formal fractions of morphisms of $\cC$. 
This happens exactly when $\cC$ is cancellative and satisfies left-Ore's condition, see \cite[Proposition 3.1.1]{DDGKM15}.
However, for our study we do not need to have a faithful functor to the universal groupoid and only demand a calculus of left-fractions.
\end{remark}

\begin{remark}
If we fix an object $e$ of $\cC$, then the group of fractions $G_\cC$ associated to $(\cC,e)$ is the automorphism group $\cG_\cC(e,e)$ inside the universal groupoid $\cG_\cC.$
\end{remark}

\subsection{Jones' actions of groupoids}\label{sec:gpoidaction}

Consider a small category $\cC$ with a calculus of left-fractions and a functor $\Phi:\cC\to\cD$.
For \text{any} morphism $f$ of $\cC$ we consider the space $X_f$ that is a copy of $\Phi(\target(f)).$
We equipped the set of morphisms of $\cC$ with the order $f\leq f'$ if there exists $p$ such that $pf=f'$. Note that elements are comparable if and only if they have same source.
For any object $a\in\ob(\cC)$ we obtain a directed system $(X_f, \source(f)=a)$ with limit space $\scrX_a.$
Let $\tilde\scrX:= \oplus_{a\in\ob(\cC)} \scrX_a$ be their direct sum (inside the category of sets that is a disjoint union).
The set $\tilde\scrX$ can be described by equivalence classes of pairs $(f,x)$ with $f\in \cC(a,b),x\in\Phi(b)$ and $a\in\ob(\cC)$ where the equivalence relation is generated by $(f,x)\sim (hf,\Phi(h)x).$
Write $\dfrac{f}{x}$ for such a class that we call a fraction and observe that $\scrX_a$ corresponds to the fractions $\dfrac{f}{x}$ for which $\source(f)=a.$
Consider an element of the universal groupoid $\cG_\cC$ that we can write as a fraction of morphisms $\dfrac{f}{f'}$.
If $\dfrac{h}{x}$ is in $\scrX_a$ and that $\source(f')=a$, then we define the composition:
$$\dfrac{f}{f'}\cdot \dfrac{h}{x} = \dfrac{pf}{\Phi(q)x} \text{ where } pf'=qh.$$
Hence, any fraction $\dfrac{f}{f'}\in\cG_\cC$ defines a map from $\scrX_{\source(f')}$ to $\scrX_{\source(f)}.$
We define 
$$\pi\left( \dfrac{f}{f'} \right) \dfrac{h}{x}  = \dfrac{pf}{\Phi(q)x}$$
 and say that $(\pi,\tilde\scrX)$ is the Jones action of the groupoid $\cG_\cC$ on $\tilde\scrX.$

An example of particular interest for us is when $\cD$ is the category of Hilbert spaces $\Hilb.$
Given a functor $\Phi:\cC\to\Hilb$ we build a Hilbert space $$\tilde\scrH=\oplus_{a\in\ob(\cC)}\scrH_a$$ that is the direct sum of Hilbert spaces $\scrH_a$ which are the completion of 
$$\{ (f,\xi): f\in\cC(a,b), \xi\in\Phi(b), b\in\ob(\cC)\}/\sim$$ for objects $a\in\ob(\cC).$ 
We equip $\tilde\scrH$ with the inner product 
$$\langle \xi,\eta\rangle=\sum_{a\in\ob(\cC)} \langle \xi_a,\eta_a\rangle$$ where $\xi_a,\eta_a$ are the components of $\xi,\eta$ in $\scrH_a.$
Given a fraction $\dfrac{f}{f'}$ with $f\in\cC(a,b),f'\in\cC(a',b')$ we define a partial isometry $\pi\left( \dfrac{f}{f'} \right)$ on $\tilde\scrH$ with domain $\scrH_{a'}$ and range $\scrH_{a}$ satisfying $\pi\left( \dfrac{f}{f'} \right)\dfrac{f'}{\xi}=\dfrac{f}{\xi}.$
We say that $(\pi,\tilde\scrL)$ is a representation of the groupoid $\cG_\cC.$

\subsection{Important examples}\label{sec:examples}

{\bf Higman-Thompson's groups.}
If we consider $\cSF_k$ the category of $k$-ary symmetric forests, then it is a category that admits a calculus of left-fractions for $k\geq 2$. 
Note that $\cSF_2=\cSF$ is the category of binary symmetric forests which we worked with all along this article.
Observe that the group of automorphisms $\cG_{\cSF_k}(r,r)$ can be represented by pairs of symmetric $k$-ary forests with both $r\geq 1$ roots and the same number of leaves.
This is one classical description given in the article of Brown of the so-called Higman-Thompson's group $V_{k,r}$ \cite{Higman74,Brown87}.
Hence, the groupoid $\cG_{\cSF_k}$ contains (in the sense of morphisms) every Higman-Thompson's group $V_{k,r}$ for a fixed $k\geq 2$.

{\bf Larger categories.}
We consider larger categories made of symmetric forests and groups.
Fix $k\geq 2$ and consider a group $\Ga$ together with a morphism $\theta:\Ga\to\Ga.$ Define 
the morphism $S_k:\Ga\to \Ga^k, g\mapsto (\theta(g),e,\cdots,e)$.
We can now proceed as in Section \ref{sec:largercat} for constructing a monoidal functor $\Theta:\cSF_k\to\Gr$ and a larger category $\cC(k,\theta,\Ga)$.
The only difference being that morphisms of $\cSF_k$ are all composition of tensor products of the trivial tree $I$ and the unique \textit{$k$-ary} tree $Y_k$ (instead of the \textit{binary} tree $Y$) that has $k$ leaves. 
We then set $\Theta(1)=\Ga,\Theta(Y_k)=S_k$ and the definition of the larger category $\cC(k,\theta,\Ga)$ becomes obvious.
It is a category that admits a calculus of left-fractions.
By adapting Proposition \ref{prop:Bernoulli} we obtain the following:

\begin{proposition}\label{prop:HT}
Consider $k\geq 2$ and the identity automorphism $\theta=\id$.
Let $\cC_k$ be the category $\cC(k,\id,\Ga)$ and put $\cG_k$ its universal groupoid.
If $r\geq 1$, then the automorphism group $\cG_k(r,r)$ of the object $r$ is isomorphic to the wreath product 
$$\Ga\wr_{\Q_r(0,r)}V_{k,r}:=\oplus_{ \Q_k(0,r) }\Ga\rtimes V_{k,r}$$ 
for the classical action of the Higman-Thompson's group $V_{k,r}$ on the set $\Q_k(0,r)$ of $k$-adic rationals in $[0,r).$

More generally, if $\theta$ is any automorphism of $\Ga$, then $\cG_k(r,r)$ is isomorphic to the twisted wreath product 
$$\Ga\wr_{\Q_r(0,r)}^\theta V_{k,r}:=\oplus_{ \Q_k(0,r) }\Ga\rtimes^\theta V_{k,r}$$ 
where the action $V_{k,r}\act \oplus_{ \Q_k(0,r) }\Ga$ is the following:
$$(v\cdot a)(x):= \theta^{\log_k(v'(v^{-1}x))}(a(v^{-1}x)) \text{ for } v\in V_{k,r}, a\in \oplus_{ \Q_k(0,r) }\Ga, x\in \Q_k(0,r).$$
\end{proposition}

\begin{remark}
Note that given a fixed $k\geq 2$, we have that two objects $r_1,r_2$ of the universal groupoid $\cG_{\cSF_k}$ are in the same connected component if and only if $r_1=r_2$ modulo $k-1$.
In that case the automorphism groups of the objects $r_1$ and $r_2$ inside $\cG_{\cSF_k}$ are isomorphic (to see this: simply conjugate the first automorphism group by any morphism $f\in\cG_{\cSF_k}(r_1,r_2)$) and thus $V_{k,r_1}\simeq V_{k,r_2}$. 

The same argument applies to the wreath products associated to $\cC_k:=\cC(k,\theta,\Ga)$. This provides isomorphisms between various wreath products of the form $\Ga\wr_{ \Q_k(0,r) }^\theta V_{k,r}$.
In particular, if $k=2$, then all Higman-Thompson's groups $V_{2,r}$ (and wreath products $\Ga\wr^\theta_{ \Q_2(0,r) }V_{2,r}$ for fixed $(\Ga,\theta)$) are mutually isomorphic but this is no longer the case when $k$ is strictly larger than $2$.
\end{remark}

\subsection{Haagerup property for groupoids}\label{sec:gpoidH}

Haagerup property was defined for measured discrete groupoids by Anantharaman-Delaroche in \cite{Delaroche12}. 
Her work generalises two important cases that are countable discrete groups and measured discrete equivalence relations.
Our case is slightly different as fibers might not be countable. However, since the set of objects is countable we can study our groupoid in a similar way than a discrete group and avoid any measure theoretical considerations.

Let $\cG$ be a small groupoid with countably many objects. 
We recall what are representations and coefficients for $\cG$.
Identify $\cG$ with the collection of all morphisms of $\cG.$
A representation $(\pi,\scrL)$ of $\cG$ is a Hilbert space $\scrL$ equal to a direct sum $\oplus_{a\in\ob(\cG)} \scrL_a$ and a map $\pi:\cG\to B(\scrL)$ such that $\pi(g)$ is a partial isometry with domain $\scrL_{\source(g)}$ and range $\scrL_{\target(g)}$.
A coefficient of $\cG$ is a map $\phi:\cG\to\C, g\mapsto \langle \eta,\pi(g)\xi\rangle$ for a representation $(\pi,\scrL)$ and some unit vectors $\xi,\eta\in\scrL$.
The coefficient is positive definite (or is called a positive definite function) if $\eta=\xi.$
Note that equivalent characterizations of positive definite functions exist in this context but we will not need them. 
We define the Haagerup property as follows.
\begin{definition}
A small groupoid $\cG$ with countably many objects has the Haagerup property if there exists a net of positive definite functions on $\cG$ that converges pointwise to one and vanish at infinity.
\end{definition}
Assume that $\cG$ has countable fibers and is as above. Let $\mu$ be any strictly positive probability measure on the set of objects of $\cG$. Then we can equip $(\cG,\mu)$ with a structure of a discrete measured groupoids, see \cite{Delaroche12}.
The two notions of coefficients and positive definite functions coincide for $\cG$ and $(\cG,\mu)$.
Moreover, $\cG$ has the Haagerup property in our sense if and only if $(\cG,\mu)$ does in the sense of Anantharaman-Delaroche \cite{Delaroche12} which justifies our definitions.
The following property is obvious.
\begin{proposition}\label{prop:gpingpoid}
Let $\cG$ be a small groupoid with countably many objects. Consider a subgroupoid $\cG_0$ in the sense that $\ob(\cG_0)\subset \ob(\cG)$ and $\cG_0(a,b)\subset\cG(a,b)$ for any objects $a,b$ of $\cG_0.$
If $\cG$ has the Haagerup property, then so does $\cG_0$ and in particular every group $\cG(a,a)$ (considered as a discrete group) for $a\in\ob(\cG)$.
\end{proposition}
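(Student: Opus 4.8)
The plan is to realize the Haagerup property of $\cG_0$ by restricting to $\cG_0$ a net of positive definite functions witnessing the Haagerup property of $\cG$, and checking that the three defining features survive the restriction. So I would start from a net $(\phi_i)$ of positive definite functions on $\cG$ that converges pointwise to one and vanishes at infinity, writing $\phi_i(g)=\langle \xi_i,\pi_i(g)\xi_i\rangle$ for a representation $(\pi_i,\scrL^i)$ with $\scrL^i=\oplus_{a\in\ob(\cG)}\scrL^i_a$ and a unit vector $\xi_i\in\scrL^i$. The key structural remark is that such a representation restricts to $\cG_0$: put $\scrL^{i,0}:=\oplus_{a\in\ob(\cG_0)}\scrL^i_a$ and let $\pi_i^0$ be $\pi_i$ restricted to the morphisms of $\cG_0$. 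Since every $g\in\cG_0(a,b)$ has $\source(g),\target(g)\in\ob(\cG_0)$, the partial isometry $\pi_i(g)$ sends $\scrL^i_{\source(g)}\subset\scrL^{i,0}$ into $\scrL^i_{\target(g)}\subset\scrL^{i,0}$, so $(\pi_i^0,\scrL^{i,0})$ is a representation of $\cG_0$ in the sense of Section \ref{sec:gpoidH}. Denoting by $\xi_i^0$ the component of $\xi_i$ in $\scrL^{i,0}$, the restricted map $\phi_i|_{\cG_0}$ is exactly the coefficient $g\mapsto\langle \xi_i^0,\pi_i^0(g)\xi_i^0\rangle$ and is therefore positive definite on $\cG_0$.

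Next I would check the two limit properties, which are essentially free. Pointwise convergence is immediate: for each morphism $g$ of $\cG_0$ one has $\phi_i|_{\cG_0}(g)=\phi_i(g)\to 1$. Vanishing at infinity is equally clear because $\cG_0$ is a subset of $\cG$: for fixed $i$ and $\varep>0$ the set $\{g\in\cG_0:|\phi_i(g)|\geq\varep\}$ is the intersection with $\cG_0$ of the finite set $\{g\in\cG:|\phi_i(g)|\geq\varep\}$, hence finite. The only point needing a word is normalization, since $\xi_i^0$ need not be a unit vector. Fixing any object $a_0\in\ob(\cG_0)$, the value $\phi_i(1_{a_0})$ is the squared norm of the $\scrL^i_{a_0}$-component of $\xi_i$, so $\phi_i(1_{a_0})\leq\|\xi_i^0\|^2\leq\|\xi_i\|^2=1$; since $\phi_i(1_{a_0})\to 1$, the squeeze gives $\|\xi_i^0\|^2\to 1$. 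Rescaling to the unit vectors $\xi_i^0/\|\xi_i^0\|$ replaces $\phi_i|_{\cG_0}$ by $\phi_i|_{\cG_0}/\|\xi_i^0\|^2$; this is a genuine positive definite function, still converges pointwise to one (the scalar factor tends to one), and still vanishes at infinity (multiplying by a fixed nonzero scalar does not change finiteness of the superlevel sets). Hence $\cG_0$ has the Haagerup property.

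Finally, for the last assertion I would specialize to the full subgroupoid on a single object $a$, whose morphisms are precisely the group $\cG(a,a)$; here $\scrL^{i,0}=\scrL^i_a$, the restricted functions are ordinary coefficients of $\cG(a,a)$, and for a one-object groupoid the definition of the Haagerup property in Section \ref{sec:gpoidH} literally coincides with the classical Haagerup property of the discrete group $\cG(a,a)$. I expect no real obstacle: the argument is pure bookkeeping, with the only mild subtlety being the renormalization of the restricted vector, which is harmless because the values of $\phi_i$ at the identity morphisms converge to one.
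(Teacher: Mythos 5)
Your proof is correct. The paper actually offers no proof of this proposition at all — it is introduced with ``The following property is obvious'' — and your argument (restricting the representation and the coefficient to the sub-Hilbert space $\oplus_{a\in\ob(\cG_0)}\scrL_a$, then renormalizing the restricted vector via the squeeze $\phi_i(1_{a_0})\leq\|\xi_i^0\|^2\leq 1$) supplies exactly the intended bookkeeping, with the renormalization being the only point of substance and handled correctly.
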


\begin{proof}[Proof of Theorem \ref{THB} and Corollary \ref{COR}]
Consider a discrete group $\Ga$ with the Haagerup property and an injective morphism $\theta:\Ga\to\Ga.$
This defines a map $S_k:\Ga\to\Ga^k$, a category $\cC=\cC(k,\theta,\Ga)$ with universal groupoid $\cG_\cC$ as explained above.
Note that $\cG_\cC$ is a small category with set of object $\N^*$ that is countable.
Let us prove that $\cG_\cC$ has the Haagerup property.

We prove the case $k=2$. The general case can be proved in a similar way.

{\bf Claim: We can assume that $\theta$ is an automorphism.}

This follows from \cite[Section 4.1]{Brothier22}.
Indeed, from $(\Ga,\theta)$ we construct a directed system of groups indexed by the natural numbers where all groups are $\Ga$ and the connecting maps are $\theta$.
The limit is a group $\widehat\Ga$ that admits an automorphism $\widehat\theta$.
Now, if $\Ga$ has the Haagerup property, then so does $\widehat\Ga$ since it is the limit of a group with the Haagerup property. 
Note, this fact uses crucially that $\theta$ is injective (and thus no quotients are performed).
Moreover, we prove in Proposition 4.3 of \cite{Brothier22} that the groupoid of fractions $\cG_{\cC}$ of $\cC(2,\theta,\Ga)$ is isomorphic to the groupoid of fractions of the category $\cC(2,\widehat\theta,\widehat\Ga)$.

From now one we assume that $\theta$ is an automorphism.
Consider a pair $(\rho,R)$ constructed from a positive definite coefficient $\phi_\Ga:\Ga\to\C$ vanishing at infinity and an isometry $R_\al$ for some $0<\al<1$ as in Section \ref{sec:coef}.
Assume that there exists $0\leq c<1$ such that $|\phi_\Ga(g)|<c$ for any $g\neq e.$
This defines a functor $\Psi:\cC\to\Hilb$ that provides a representation $(\pi,\tilde\scrL)$ of the universal groupoid $\cG_\cC$ satisfying that 
$$\pi\left( \dfrac{g\sigma f}{ f' }\right) \dfrac{f'}{\xi} = \dfrac{pf}{ \Tens(\sigma^{-1})\rho^{\ot n}(g^{-1}) \Psi(q) \xi}$$
for $f,f'$ forests with $n$ leaves, $\sigma\in S_n$ and $g\in\Ga^n$.
Consider the unit vector 
$$\eta_{N,\phi_\Ga,\al}:=N^{-1/2} \oplus_{n=1}^N \xi\ot\de_e$$ for $N\geq 1$ and where $\xi$ is the vector satisfying $\phi_\Ga(g)=\langle \xi,\kappa_0(g)\xi\rangle$, see Section \ref{sec:coef}.
By following the same proof than Proposition \ref{prop:Czero} we obtain that the coefficient $\varphi_{N,\phi_\Ga,\al}$ associated to $\eta_{N,\phi_\Ga,\al}$ and $(\pi,\tilde\scrL)$ vanishes at infinity.
Fix a net of positive definite functions $(\phi_i:\Ga\to\C, i\in I)$ satisfying the hypothesis of the Haagerup property such that $|\phi_i(g)|<c_i$ for any $g\neq e$ for some $0\leq c_i <1$.
The net of coefficients 
$$(\varphi_{N,\phi_i,\al}, N\geq 1, i\in I, 0<\al<1)$$ 
on the groupoid $\cG_\cC$ satisfies all the hypothesis required by the Haagerup property.
This proves Theorem \ref{THB}.

Consider the category $\cC=\cC(2,\theta,\Ga)$ where $\Ga$ has the Haagerup property, $\theta\in\Aut(\Ga)$ is an automorphism, and the category of $k$-ary forests $\cSF_k$.
By Proposition \ref{prop:gpingpoid} we have that the group $\cG_\cC(r,r)$ of automorphisms of the object $r$ in the universal groupoid of $\cC$ is isomorphic to the twisted wreath product $\Ga\wr^\theta_{\Q_k(0,r)} V_{k,r}$.
We proved that $\cG_\cC$ has the Haagerup property and thus so does the isotropy group $\cG_\cC(r,r)$ (by Proposition \ref{prop:gpingpoid}). This proves Corollary \ref{COR}.
\end{proof}

\appendix
\section{Categories and groups of fractions}

We end this article by providing an alternative description of Jones' actions using a more categorical language. We do not give details and only sketch the main steps.
This was explained to us by Sergei Ivanov, Richard Garner and Steve Lack. We are very grateful to them.

We keep the notation of Section \ref{sec:actions} and thus $\Phi:\cC\to\cD$ provides a Jones' action $\pi_\Phi:G_\cC\act \mathscr X$ with $\mathscr X=\varinjlim_{t,\Phi} X_t.$
Let $(\cG_\cC,P)$ be the universal groupoid of $\cC$ with functor $P:\cC\to\cG_\cC$.
Let $(e\downarrow \cC)$ be the comma-category of objects under $e$ whose objects are morphisms of $\cC$ with source $e$ and morphism triangles of morphisms of $\cC$ (e.g.~if $\cC=\cF,e=1$, then objects and morphisms of $(1\downarrow \cF)$ are trees and forests respectively). 
This category comes with a functor $(e\downarrow \cC)\to \cC$ consisting in only remembering the target of morphisms (e.g.~sending a tree to its number of leaves and keeping forests for morphisms). 
The composition of functors $\tilde\Phi:(e\downarrow \cC)\to\cC\to\cD$ provides a diagram of type $(e\downarrow\cC)$ in the category $\cD$ and the colimit (if it exists) corresponds to our limit $\mathscr X.$
Assume that the left Kan extension $Lan_P(\Phi):\cG_\cC\to\cD$ of $\Phi$ along $P$ exists. 
Then one can prove that $Lan_P(\Phi)(e)$ is isomorphic to the colimit of $\tilde\Phi$ and is thus isomorphic to $\mathscr X.$ 
But then $Lan_P(\Phi)$ sends $\cG_\cC(e,e)\simeq G_\cC$ in the automorphism group of $\mathscr X$ which corresponds to the Jones' action $\pi_\Phi$.

Using this construction, if we only want a map from the group of fractions $G_\cC$ to the automorphism group of an object, then we don't need to require that objects of $\cD$ are sets.
Actions of the whole universal groupoid $\cG_\cC$ can be constructed in a similar way.
In order to make this machinery working we need to have a target category $\cD$ with sufficiently many colimits in order to have a Kan extension of our functor. 



\begin{thebibliography}{DDG+15}

\bibitem[ABC21]{ABC19}
V.~Aiello, A.~Brothier, and R.~Conti.
\newblock Jones representations of {T}hompson's group {F} arising from
  {T}emperley-{L}ieb-{J}ones algebras.
\newblock {\em Int. Math. Res. Not. IMRN}, 15:11209--11245, 2021.

\bibitem[AD12]{Delaroche12}
C.~Anantharaman-Delaroche.
\newblock The {H}aagerup property for discrete measured groupoids.
\newblock {\em Nordforsk Network Closing Conference}, 1-30, 2019.

\bibitem[AD13]{AntolinDreesen13}
Y.~Antolin and D.~Dreesen.
\newblock The {H}aagerup property is stable under graph products
\newblock {\em Preprint, arXiv:1305.6748}, 2013.

\bibitem[AW81]{Akemann-Walter81}
C.~Akemannm and M.~Walter.
\newblock Unbounded negative definite functions.
\newblock {\em Can. J. Math.}, 33(4):862--871, 1981.

\bibitem[BdCK15]{Bartholdi-Cornulier-Kochloukova15}
L.~Bartholdi, Y.~de~Cornulier, and D.~Kochloukova.
\newblock Homological finiteness properties of wreath products.
\newblock {\em Quart. J. Math.}, 66:437--457, 2015.

\bibitem[Bel04]{Belk04}
J.~Belk.
\newblock {\em Thompson's group {F}}.
\newblock PhD thesis, Cornell University, 2004.

\bibitem[Bri07]{Brin07}
M.~Brin.
\newblock The algebra of strands splitting. {I}. {A} braided version of {T}hompson's group {V}.
\newblock {\em J. Group Theory}, 10(6):757--788, 2007.

\bibitem[Bro20]{Brothier20-survey}
A.~Brothier.
\newblock On {J}ones' connections between subfactors, conformal field theory,
  {T}hompson's groups and knots.
\newblock {\em Celebratio Mathematica}, in the volume Vaughan F.~R.~Jones, 2020, available at \url{https://celebratio.org/Jones_VFR/article/821/}.

\bibitem[Bro21]{Brothier21}
A.~Brothier.
\newblock Classification of {T}hompson related groups arising from {J}ones
  technology {II}.
\newblock {\em Bull. Soc. Math. Fr.}, 149(4):663--725, 2021.

\bibitem[Bro22a]{Brothier22}
A.~Brothier.
\newblock Classification of {T}hompson related groups arising from {J}ones
  technology {I}.
\newblock {\em to appear in Int. Math. Res. Not.}, 2022.

\bibitem[Bro22b]{Brothier22-HPM}
A.~Brothier.
\newblock Forest-skein groups {II}: homogeneously presented monoids.
\newblock {\em Preprint}, arXiv:2212.01993, 2022.

\bibitem[BJ19b]{Brothier-Jones19}
A.~Brothier and V.F.R. Jones.
\newblock On the {H}aagerup and {K}azhdan property of {R}. {T}hompson's groups.
\newblock {\em J. Group Theory}, 22(5):795--807, 2019.

\bibitem[BJ19a]{Brothier-Jones19bis}
A.~Brothier and V.~F.~R. Jones.
\newblock Pythagorean representations of {T}homspon's groups.
\newblock {\em J. Funct. Anal.}, 277:2442--2469, 2019.

\bibitem[BS19a]{Brot-Stottmeister-M19}
A.~Brothier and A. Stottmeister.
\newblock Operator-algebraic construction of gauge theories and {J}ones' actions of {T}hompson's groups.
\newblock {\em to appear in Comm. Math. Phys.}, 2019.

\bibitem[BS19b]{Brot-Stottmeister-P19}
A.~Brothier and A. Stottmeister.
\newblock Canonical quantization of 1+1-dimensional {Y}ang-{M}ills theory: {A}n operator-algebraic approach.
\newblock {\em Preprint}, arXiv:1907.05549, 2019.

\bibitem[BW22]{Brothier-Wijesena22}
A.~Brothier and D.~Wijesena.
\newblock Jones' representations of R. Thompson's groups not induced by finite dimensional ones.
\newblock {\em Preprint}, arXiv:2211.08555, 2022.

\bibitem[Bro87]{Brown87}
K.S. Brown.
\newblock Finiteness properties of groups.
\newblock {\em J. Pure. App. Algebra}, 44:45--75, 1987.

\bibitem[CCJJA01]{CCJJV01}
P.-A. Cherix, M.~Cowling, P.~Jolissaint, P.~Julg, and A.~Valette.
\newblock {\em Groups with the Haagerup property}, volume Progess in Math. 197.
\newblock Birkhauser, 2001.

\bibitem[CFP96]{Cannon-Floyd-Parry96}
J.W. Cannon, W.J. Floyd, and W.R. Parry.
\newblock Introductory notes on {R}ichard {T}hompson's groups.
\newblock {\em Enseign. Math.}, 42:215--256, 1996.

\bibitem[Cor06]{Cornulier06}
Y.~Cornulier.
\newblock Finitely presented wreath products and double coset decompositions.
\newblock {\em Geom. Dedicata},122(1):89-108, 2006.

\bibitem[Cor18]{Cornulier18}
Y.~Cornulier.
\newblock Locally compact wreath products.
\newblock {\em J. Aust. Math. Soc.}, 2018.

\bibitem[CSV12]{CornulierStalderValette12}
Y.~Cornulier, Y.~Stalder, and A.~Valette.
\newblock Proper actions of wreath products and generalisations.
\newblock {\em Trans. Amer. Math. Soc.}, 346(6):3159--3184, 2012.

\bibitem[DDGKM15]{DDGKM15}
P.~Dehornoy, F.~Digne, E.~Godelle, D.~Kramer, and J.~Michel.
\newblock {\em Foundation of Garside theory}.
\newblock Europeran Mathematical Society, 2015.

\bibitem[Far03]{Farley03-H}
D.~Farley.
\newblock Proper isometric actions of {T}hompson's groups on {H}ilbert space.
\newblock {\em Int. Math. Res. Not.}, 45:2409--2414, 2003.

\bibitem[GZ67]{GabrielZisman67}
P.~Gabriel and M.~Zisman
{\em Calculus of fractions and homotopy theory}.
\newblock Springer-Verlag, 1967.

\bibitem[GhS87]{Ghys-Sergiescu87}
E.~Ghys and V.~Sergiescu 
\newblock Sur un groupe remarquable de diff\'eomorphismes du cercle [On a remarkable group of diffeomorphisms of the circle].
\newblock {\em Comment. Math. Helv.}, 62(2):185--239, 1987.

\bibitem[GS15]{GolanSapir15}
G.~Golan and M.~Sapir.
\newblock On {J}ones' subgroup of {R}.~{T}hompson group {F}.
\newblock {\em J. Algebra}, 470:122--159, 2017.

\bibitem[Gr93]{Gromov93}
M.~Gromov.
\newblock {\em Asymptotic invariants of infinite groups, in Geometric Group Theory}, volume~2.
\newblock London Math. Soc. Lecture Notes 182, Cambridge Univ. Press, 1-295, 1993.

\bibitem[Hig74]{Higman74}
G.~Higman.
\newblock {\em Finitely presented infinite simple groups}, volume~8.
\newblock Notes on pure mathematics, 1974.

\bibitem[HK01]{HigsonKasparov01}
N.~Higson and G.~G.~Kasparov.
\newblock Operator K-theory for groups which act properly and isometrically on Hilbert space.
\newblock{\em Invent. Math.}, 144:23-74, 2001.

\bibitem[Jon17]{Jones17-Thompson}
V.F.R. Jones.
\newblock Some unitary representations of {T}ompson's groups {F} and {T}.
\newblock {\em J. Comb. Algebra}, 1(1):1--44, 2017.

\bibitem[Jon18]{Jones16-Thompson}
V.F.R. Jones.
\newblock A no-go theorem for the continuum limit of a periodic quantum spin
  chain.
\newblock {\em Comm. Math. Phys.}, 357(1):295--317, 2018.

\bibitem[Jon21]{Jones21}
V.~F.~R. Jones.
\newblock {\em Irreducibility of the Wysiwyg Representations of {T}hompson's
  Groups}, pages 411--430.
\newblock Springer International Publishing, Cham, 2021.

\bibitem[Mal53]{Maltsev53}
A.~Mal'tsev.
\newblock Nilpotent semigroups.
\newblock {\em Uchen. Zap. Ivanovsk. Ped. Inst.}, 4:107--111, 1953.

\bibitem[Nav02]{Navas02}
A.~Navas
\newblock Actions de groupes de {K}azhdan sur le cercle [{K}azhdan group actions on the circle].
\newblock {\em Ann. Sci. \'Ecole Norm. Sup. }, 35(4):749--758, 2002.

\bibitem[Ren18]{Ren18-Thompson}
Y.~Ren.
\newblock From skein theory to presentations for {T}hompson group.
\newblock {\em J. Algebra}, 498:178--196, 2018.

\bibitem[Rez01]{Reznikoff01}
A.~Reznikoff.
\newblock {\em Analytic topology}, volume~1.
\newblock Birkhauser, Basel, 2001.

\bibitem[Tan16]{Tanushevski16}
S.~Tanushevski.
\newblock A new class of generalized Thompson's groups and their normal
  subgroups.
\newblock {\em Commun. Algebra}, 44:4378--4410, 2016.

\bibitem[Val18]{Valette17}
A.~Valette
\newblock Proper isometric actions on Hilbert spaces: a-({T})-menability and {H}aagerup property.
\newblock {\em Handbook of Group Actions}, 4:623--652, 2018.

\bibitem[WZ18]{Witzel-Zaremsky18}
S.~Witzel and M.~Zaremsky.
\newblock Thompson groups for systems of groups, and their finiteness properties.
\newblock {\em Groups Geom. Dyn.}, 12:289--358, 2018.


\end{thebibliography}

\end{document}